\documentclass[a4paper,11pt]{article}

\usepackage{amsfonts,qtree,stmaryrd,textcomp}
\usepackage{pifont,amssymb,amsmath,amsthm,tabularx,graphicx}
\usepackage{tree-dvips,pictexwd,dcpic}
\usepackage{bbm}
\usepackage{bm}

\usepackage{stmaryrd}

\usepackage[T1]{fontenc}
\usepackage[latin1]{inputenc}
\usepackage[text={17cm,26cm},centering]{geometry}

\usepackage{etex}
\usepackage{fancyhdr}
\usepackage{graphicx}
\usepackage{enumitem}
\usepackage{amsmath}
\usepackage[bbgreekl]{mathbbol}
\usepackage{amssymb}
\usepackage{amsthm}
\usepackage[all]{xy}

\usepackage{epigraph}
\RequirePackage{bussproofs}

\usepackage{cancel}

\usepackage{mathabx}

\usepackage{scalerel,stackengine}

\usepackage{framed}
\usepackage{mathtools}
\usepackage{url}
\usepackage{proof}
\usepackage{xspace}
\usepackage{listings}
\usepackage{wrapfig}
\usepackage{tikz}
\usepackage{tikz-cd}

\usepackage{relsize}

\usetikzlibrary{positioning}
\usetikzlibrary{shapes}

\usetikzlibrary{arrows}
\usetikzlibrary{calc}
\usepackage{tikz-3dplot}
\usetikzlibrary{decorations.pathmorphing}

\usepackage{amsfonts,amssymb,amsmath,qtree,stmaryrd,textcomp, amsthm}

\usetikzlibrary{decorations.pathreplacing}
\tikzset{axis/.style={&lt;-&gt;}}

\stackMath
\newcommand\reallywidehat[1]{%
\savestack{\tmpbox}{\stretchto{%
  \scaleto{%
    \scalerel*[\widthof{\ensuremath{#1}}]{\kern-.6pt\bigwedge\kern-.6pt}%
    {\rule[-\textheight/2]{1ex}{\textheight}}
  }{\textheight}%
}{0.5ex}}%
\stackon[1pt]{#1}{\tmpbox}%
}
 \definecolor{MyBlue}{rgb}{0.05, 0.25, 0.65}
 
 \definecolor{MyRed}{rgb}{0.90, 0.05, 0.05}
 
\definecolor{MyGreen}{rgb}{0.05, 0.90, 0.05}

\newcommand{\B}{\boldsymbol}
\newcommand{\C}[1]{\mathcal{#1}}
\newcommand{\D}[1]{\mathbb{#1}}

\usepackage{ mathrsfs }

\newtheorem{theorem}{Theorem}[section]
\newtheorem{proposition}[theorem]{Proposition}
\newtheorem{lemma}[theorem]{Lemma}

\newtheorem{remark}[theorem]{Remark}

\newtheorem{example}[theorem]{Example}

\newtheorem{definition}[theorem]{Definition}

\newcommand{\Nat}{{\mathbb N}}
\newcommand{\Real}{{\mathbb R}}

\newcommand{\id}{\mathrm{id}}

\newcommand{\BISH}{\mathrm{BISH}}

\newcommand{\CST}{\mathrm{CST}}

\newcommand{\dom}{\mathrm{dom}}

\newcommand{\PEM}{\mathrm{PEM}}

\newcommand{\Top}{\mathrm{\mathbf{Top}}}

\newcommand{\INT}{\mathrm{INT}}
\newcommand{\CLASS}{\mathrm{CLASS}}

\newcommand{\TOT}{\Leftrightarrow}

\newcommand{\To}{\Rightarrow}

\newcommand{\sto}{\rightsquigarrow}

 \newcommand{\pto}{\rightharpoonup}

\newcommand{\MLTT}{\mathrm{MLTT}} 
\newcommand{\CZF}{\mathrm{CZF}}

\newcommand{\pr}{\textnormal{\texttt{pr}}}

\newcommand{\BST}{\mathrm{BST}}

\newcommand{\Disj}{\mathbin{\B ) \B (}}

\newcommand{\Set}{\mathrm{\mathbf{Set}}}

\newcommand{\se}{\mathrm{se}}

\newcommand{\eto}{\hookrightarrow}

\newcommand{\Fun}{\textnormal{\textbf{Fun}}}

\newcommand{\crTop}{\textnormal{\textbf{crTop}}}

\newcommand{\emptys}{\cancel{\mathlarger{\mathlarger{\mathlarger{\square}}}}}

\newcommand{\MIN}{\mathrm{MIN}}

\newcommand{\Ineq}{\textnormal{\texttt{Ineq}}}

\newcommand{\SetIneq}{\textnormal{\textbf{SetIneq}}}

\newcommand{\SetExtIneq}{\textnormal{\textbf{SetExtIneq}}}

\newcommand{\fXY}{f \colon X \to Y}

\newcommand{\swap}{\textnormal{\texttt{swap}}}

\newcommand{\Eisj}{\B ] \B [}

\newcommand{\swapa}{\textnormal{\texttt{swapa}}}

\newcommand{\ti}{\mathrm{I}}

\newcommand{\tii}{\mathrm{II}}

\newcommand{\SwapAlg}{\textnormal{\textbf{SwapAlg}}}

\newcommand{\SepSwapAlg}{\textnormal{\textbf{SepSwapAlg}}}

\newcommand{\lcm}{\textnormal{\texttt{lcm}}}

\newcommand{\gcdi}{\textnormal{\texttt{gcd}}}

\newcommand{\DPSA}{\mathrm{DPSA}}

\newcommand{\one}{\D 1}

\newcommand{\two}{\D 2}

\newcommand{\EEmpty}{\mathrm{Empty}} 

\newcommand{\coEmpty}{\mathrm{coEmpty}}

\newcommand{\StrExtFun}{\textnormal{\textbf{StrExtFun}}}

\newcommand{\Emb}{\textnormal{\textbf{Emb}}}

\newcommand{\Cont}{\textnormal{\textbf{Cont}}}

\newcommand{\Stone}{\textnormal{\texttt{Stone}}}

\newcommand{\fii}{\textnormal{\texttt{field}}}

\newcommand{\ho}{\textnormal{\texttt{char}}}

\newcommand{\PartSwapHom}{\textnormal{\textbf{PartSwapHom}}}

\newcommand{\Tot}{\textnormal{Tot}}

\newcommand{\EFQ}{\textnormal{EFQ}}

\newcommand{\bool}{\textnormal{\texttt{bool}}}

\begin{document}

\date{}

\title{\textbf{Constructive Stone representations for separated swap and Boolean algebras}}

\author{Daniel Misselbeck-Wessel\\	
	Munich Center for Mathematical Philosophy, University of Munich\\
	daniel.wessel@lmu.de\\[2mm]
	Iosif Petrakis\\
	Mathematics Institute, University of Munich\\
	petrakis@math.lmu.de} 
	

%





\maketitle

\begin{abstract}
	\noindent 
	Swap algebras generalise Bishop's complemented powerset as Boolean algebras generalise the powerset. Actually, all Boolean algebras are swap algebras. We prove constructively a Stone representation theorem for separated swap algebras of type $(\tii)$, where the notion of a separated swap algebra generalises the corresponding notion of a separated Boolean algebra.  Moreover, we prove a Stone-\v Cech theorem for swap algebras of type $(\tii)$, showing that the restriction to separated swap algebras is not a loss of generality from the point of view of the theory of swap characters. A constructive Stone representation theorem and a Stone-\v Cech theorem for Boolean algebras follow as special cases. We introduce sets with a Boolean inequality, that is sets with an internal falsum. These sets allow a book-keeping of the use of the Ex falso principle in constructive mathematics. If we restrict to swap algebras with a Boolean inequality, then the proof of the Stone representation theorem for swap algebras of type $(\tii)$ is within minimal logic.\\[2mm]
	\textit{Keywords}: swap algebras, Boolean algebras, Stone representation theorem, complemented subsets
\end{abstract}


\section{Two problems in the constructivisation of the Stone representation theorem for Boolean algebras}\label{subsec: two}

The theory of swap algebras and swap rings is a generalisation of the theory of Boolean algebras and Boolean rings that originates from 
Bishop-style constructive mathematics $(\BISH)$ (see~\cite{Bi67, BB85, BR87}) and has
 applications also to classical mathematics $(\CLASS)$ (see~\cite{MWP24, MWP24b}). Swap algebras and swap rings are introduced in~\cite{MWP24} as abstract versions of the class of complemented subsets and partial Boolean-valued functions, respectively, that were 
 studied systematically in~\cite{PW22}. Complemented subsets  i.e., pairs of subsets which are disjoint in a strong sense, were introduced by Bishop in~\cite{Bi67} as an important tool to his constructive reconstruction of measure and integration theory\footnote{In~\cite{Bi66}, p. 309, Bishop forsees that the use of complemented subsets in mathematics can be as broad as the use of complementation itself:
 \begin{quote}
 	We do not wish to define $x \in - A$ to mean that the assumption $x \in A$ leads to a contradiction (in contrast to the approach of Brouwer). Complementation defined in terms of negation is too elusive. In addition, it leads to a loss of meaning. Therefore whenever a notion of set complementation is needed we introduce it affirmatively. (The same  is true for inequality relations) One way of doing this is through the very flexible notion of a complemented set.
 	\end{quote}}. As the powerset $\C P(X)$ of a set $X$ is classically  bijective to the 
 \textit{total} Boolean-valued functions on $X$, the complemented power set $\C P^{\Disj}(X)$ of $X$ is constructively bijective to the \textit{partial} Boolean-valued functions on $X$ (see~\cite{PW22, MWP24}). A Boolean algebra and a Boolean ring is a special case of a swap algebra and a swap ring, respectively. In~\cite{MWP24} it is shown that the duality between swap algebras of type\footnote{There are two types of swap algebras, as there are two algebras of complemented subsets i.e., two ways to define their join and meet (see~\cite{PW22, MWP24}).} $(\tii)$ and swap rings generalises the duality between Boolean algebras and Boolean rings.
Here we extend our results in~\cite{MWP24} presenting a constructive Stone representation theorem for separated swap algebras of type $(\tii)$ that has as a special case a constructive Stone representation theorem for separated Boolean algebras.

We highlight two problems in the constructivisation of the Stone representation theorem for Boolean algebras. \textbf{The first problem} can be called ``the use of points''. For example, if one wants to define pointed Boolean-valued homomorphisms, or pointed Boolean characters\footnote{The term character comes from the theory of Banach algebras. The classical Stone representation theorem is a special case of the Gelfand-Naimark theorem for commutative $C^*$-algebras as the notion of a $\D C$-valued character on a $C^*$-algebra is reduced to a Boolean-valued homomorphism on a Boolean algebra (see~\cite{BD73, DB86}).} on $\C P(X)$, then one needs to use the principle of the excluded middle $(\PEM)$; if $x_0 \in X$, then $\widehat{x_0} \colon \C P(X) \to \D 2$, where $\D 2 := \{0, 1\}$, is defined with $\PEM$ by the rule
$$\widehat{x_0}(A) := \left\{ \begin{array}{ll}
	1   &\mbox{, $x_0 \in A$}\\
	0             &\mbox{, $x_0 \notin A$.}
\end{array}
\right. $$
It is with these pointed Boolean characters that one shows that $\C P(X)$ is separated. We call a Boolean algebra \textit{separated}, if 
for every $a \neq 0$ there is a Boolean character, $h_a$ on $A$ with $h_a(a) = 1$. If $A \neq \emptyset$, then classically there is a point $x_0 \in A$, hence $\widehat{x_0}(A) = 1$.  While point-free approaches to the theory of Boolean algebras provide a way out to this dead end, from the constructive point of view (see e.g.,~\cite{Ne96},~\cite{CS08}), here we elaborate a different constructive ``solution'' using points, but within the ``two-dimensional'' framework of complemented subsets.

Following the successful use of complemented subsets and real-valued, partial functions in constructive measure theory by Bishop, Cheng and Bridges in~\cite{BC72, Br79, BB85}, and later by other authors (see~\cite{CP02, Ch21, GP23, PZ22}), we employ here complemented subsets and \textit{swap characters} i.e., partial, Boolean-valued homomorphisms, in order to avoid the aforementioned use of PEM in the definition of pointed Boolean characters.
If $\B A := (A^1, A^0)$ is a complemented subset of set $X$ with an extensional inequality, where $A^1, A^0$ are disjoint in a strong sense (see Definition~\ref{def: extapartsubsets}), then we define the above pointed Boolean character, not on the whole complemented powerset, but on its subset of all complemented subsets $\B A$ for which their domain $A^1 \cup A^0$ contains $x_0$, using the following rule that avoids $\PEM$: 
$$\widehat{x_0}(\B A) := \left\{ \begin{array}{ll}
	1   &\mbox{, $x_0 \in A^1$}\\
	0             &\mbox{, $x_0 \in A^0$.}
\end{array}
\right. $$
With these pointed swap characters we show constructively (Proposition~\ref{prp: sep1}) that $\C P^{\Disj}(X)$ is a separated swap algebra of type $(\tii)$, a notion that generalises the notion of a separated Boolean algebra and involves the set $\widehat{\B A}$ of swap characters on a swap algebra $A$ of type $(\tii)$ (Definition~\ref{def: sep}). 

\textbf{The second problem} in the constructivisation of the Stone representation theorem is that one needs Zorn's lemma to show that every Boolean algebra is separated
(see~\cite{Ha74}, p.~77). Actually, the Boolean Prime Ideal Theorem, a non-constructive choice principle, suffices (see~\cite{BH21}). Here we explain that there is good reason to avoid the classical fact that every Boolean algebra is separated. We do not need to employ some maximal object, only to change the equality of $A$, in order to restrict our study to separated Boolean algebras only. As we show in section~\ref{sec: stonecech}, from a certain point of view this move
does not imply a loss of generality. In topology a similar attitude is followed in the theory of the ring of continuous functions $C(X)$ of a topological space $X$ (see~\cite{GJ60, Wa74}), where from the point of view of $C(X)$ it suffices to restrict to completely regular topological spaces.
Similarly, from the point of view of the theory of swap characters $\widehat{\B A}$, 
it suffices to restrict to separated swap (Boolean) algebras.

We structure this paper as follows:
\vspace{-2mm}
\begin{itemize}
	\item In section~\ref{sec: extsub} we give some basic definitions and facts on sets equipped with an extensional inequality ($\SetExtIneq$), on extensional subsets of such sets, and on strongly extensional functions ($\StrExtFun$). Our main ``universe'' is the category\footnote{For clarity, we present a category as the pair of its objects and its arrows.} $\big(\SetExtIneq, \StrExtFun\big)$
	of sets with an extensional inequality and strongly extensional functions.

	\item In section~\ref{sec: cs} we present some basic definitions and facts on complemented subsets.

	\item In section~\ref{sec: bvpf} we introduce a new equality and inequality on partial Boolean-valued functions on a set $X$ in $\SetExtIneq$, that can be ``inherited'' to complemented subsets (Definition~\ref{def: csineq}).

	\item In section~\ref{sec: gelfand} we define the partial Gelfand transform as a constructive counterpart to the classical total Gelfand transform, and we show that 
	the partial Gelfand transform is a well-defined embedding of a set $X$ in $\SetExtIneq$ into its second strongly extensional, partial dual $\B X^{\circledast \circledast}$ (Proposition~\ref{prp: pGelfand}).

	\item In section~\ref{sec: swap} we include the definition of swap algebras of type $(\tii)$, some basic examples of such algebras, and a few of their fundamental properties. Using intuitionisitic logic $(\INT)$, the complemented powerset is a swap algebra of type $(\tii)$.

	\item In section~\ref{sec: shomos} we define the set $\widehat{\B A}$ of swap characters 
	(Definition~\ref{def: partialshomo}) between a swap algebra of type $(\tii)$ and the Booleans $\D 2$. We introduce the notion of a separated swap algebra (Definition~\ref{def: sep}), and we show that the complemented subsets of a set $X$ in $\SetExtIneq$ form a separated swap algebra of type $(\tii)$ (Proposition~\ref{prp: sep1}). The category $(\SwapAlg_{\tii}, \PartSwapHom)$ of swap algebras of type $(\tii)$ and partial swap-homomorphisms between them is introduced in Proposition~\ref{prp: compswap}.

	\item In section~\ref{sec: bool} we introduce sets with a Boolean inequality. These sets facilitate the avoidance of the Ex falso principle $(\EFQ)$ in many proofs of constructive mathematics. We introduce the so-called Boolean complemented subsets, and we show that within minimal logic $(\MIN)$ the Boolean complemented powerset is a swap algebra of type $(\tii)$ (Proposition~\ref{prp: bcs1}) under the condition that the Boolean inequality is complete. The latter property (Definition~\ref{def: complete}) is satisfied by all fundamental number systems (Proposition~\ref{prp: NtoR}).

	\item In section~\ref{sec: repr} we prove constructively a Stone representation theorem for separated swap algebras of type $(\tii)$ (Theorem~\ref{thm: stone}), according to which a separated swap algebra $A$ of type $(\tii)$ is embedded into the swap algebra $\C E^{\Disj}(\widehat{\B A})$ of type $(\tii)$ of complemented subsets of $\widehat{\B A}$. If $A$ is a set with  a Boolean inequality, then the proof of our Stone representation theorem is within minimal logic (Proposition~\ref{prp: min}).

	\item In section~\ref{sec: stonecech} we prove a Stone-\v{C}ech theorem for swap algebras of type $(\tii)$ (Theorem~\ref{thm: sc}), according to which the swap characters on a swap algebra $A$ of type $(\tii)$ can be identified with the swap characters on a separated swap algebra $\sigma A$ of type $(\tii)$. Moreover, if $A$ has a Boolean inequality, then $\sigma A$ also has a Boolean inequality.

\item In section~\ref{sec: ba} we reduce the main results of the previous two sections to the special case of Boolean algebras. Namely, from  Theorem~\ref{thm: stone} follows a constructive Stone representation theorem for separated Boolean algebras (Theorem~\ref{thm: stoneba}), and from Theorem~\ref{thm: sc} follows a Stone-\v{C}ech theorem for Boolean algebras (Theorem~\ref{thm: scba}).

\end{itemize}

We work within Bishop Set Theory $(\BST)$, a semi-formal system for $\BISH$ that behaves as a high-level programming language. For all notions and results of $\BST$ that are used here without definition or proof we refer to~\cite{Pe20, Pe22a, Pe24}. The type-theoretic interpretation of Bishop's set theory into the theory of setoids (see especially the work of Palmgren~\cite{Pa12}--\cite{Pa19}) has become nowadays the standard way to understand Bishop sets. Other formal systems for BISH are Myhill's Constructive Set Theory $(\CST)$, introduced 
in~\cite{My75}, and Aczel's system $\CZF$ (see~\cite{AR10}).
For all notions and results from the theory of swap algebras that are used here without definition or proof, we refer to~\cite{PW22, MWP24}.

\section{Extensional subsets}
\label{sec: extsub}

Bishop Set Theory (BST) is an informal, constructive theory of \emph{totalities} and \emph{assignment routines} between totalities that accommodates Bishop's informal system BISH and serves as an intermediate step between Bishop's original theory of sets and an adequate and faithful formalisation of BISH in Feferman's sense~\cite{Fe79}. Totalities, apart from the basic, undefined set of natural numbers $\mathbb{N}$, are defined through a membership-condition. The \emph{universe} $\mathbb{V}_0$ of predicative sets is an open-ended totality, which is not considered a set itself, and every totality the membership-condition of which involves quantification over the universe is not considered a set, but a proper class. \emph{Sets} are totalities the membership-condition of which does not involve quantification over $\mathbb{V}_0$, and are equipped with an equality relation i.e., an equivalence relation. An equality relation $x =_X x{'}$ on a defined set $X$ is defined through a formula $E_X(x, x{'})$ and a proof that $E_X(x, x{'})$ satisfies the properties of an equivalence relation.
Assignment routines are of two kinds: \emph{non-dependent} ones and \emph{dependent} ones. 

\begin{definition}\label{def: function}
	If $(X, =_X)$ and $(Y, =_Y)$ are sets, a function $\fXY$ is a non-dependent assignment routine $f \colon X \sto Y$ i.e., for every $x \in X$ we have that $f(x) \in Y$, such that $x =_X x{'} \To f(x) =_Y f(x{'})$, for every $x, x{'} \in X$. A function \(f:X\to Y\) is an embedding, if for all \(x,x'\) in \(X\) we have that \(f(x) =_Yf(x')\) implies \(x =_X x'\), and we write \(f:X \eto Y\), while it is a surjection, if $\forall_{y \in Y}\exists_{x \in X}(f(x) =_Y y )$. We denote by $\D F(X, Y)$ the set of functions from $X$ to $Y$, which is equipped with the equality\footnote{In this way  the type-theoretic function extensionality axiom is built in as the canonical equality of the function set. }
	$$f =_{\D F(X,Y)} g :\TOT \forall_{x \in X}\big(f(x) =_Y g(x).$$
	Let $\id_X$ be the identity function on $X$. Let $(\Set, \Fun)$ be the category of sets and functions.
	Two sets are equal in $\mathbb{V}_0$ if there is $g \colon Y \to X$, such that\footnote{With this equality the universe in $\BST$ can be called \emph{univalent} in the sense of Homotopy Type Theory~\cite{Ri22, Ho13}, as, by definition, an equivalence between sets is an equality.} $g \circ f =_{\D F(X, X)} \id_X$ and $f\circ g =_{\D F(Y, Y)} \id_Y$.
\end{definition}

\begin{definition}\label{def: apartness}
	If $(X, =_X)$ is a set, let the following formulas with respect to a relation $x \neq_X y:$\\[1mm]
	$(\Ineq_1) \  \forall_{x, y \in X}\big(x =_X y \ \& \ x \neq_X y \To \bot \big)$, where $\bot := 0 =_{\Nat} 1$.\\[1mm]
	$(\Ineq_2) \ \forall_{x, x{'}, y, y{'} \in X}\big(x =_X x{'} \ \& \ 
	y =_X y{'} \ \& \ x \neq_X y \To x{'} \neq_X y{'}\big)$,\\[1mm]
	$(\Ineq_3) \ \forall_{x, y \in X}\big(\neg(x \neq_X y) \To x =_X y\big)$,\\[1mm]
	$(\Ineq_4)  \forall_{x, y \in X}\big(x \neq_X y \To y \neq_X x\big)$,\\[1mm]
	$(\Ineq_5) \ \forall_{x, y \in X}\big(x \neq_X y \To \forall_{z \in X}(z \neq_X x \ \vee \ z \neq_X y)\big)$,\\[1mm]
	$(\Ineq_6) \ \forall_{x, y \in X}\big(x =_X y \vee x \neq_X y\big)$.\\[1mm]
	If $\Ineq_1$ is satisfied, we call $\neq_X$ an 	inequality on $X$, and the 
	structure $\C X := (X, =_X, \neq_X)$ a set with an inequality. 
	If $(\Ineq_6)$ is satisfied, then $\C X$ is 
	\textit{discrete}, if $(\Ineq_2)$ holds, $\neq_X$ is 
	called \textit{extensional}, and  if $(\Ineq_3)$ holds, it is \textit{tight}. If it satisfies $(\Ineq_4)$ and $(\Ineq_5)$, it is called an \textit{apartness relation} on $X$.
\end{definition}

The primitive inequality $\neq_{\Nat}$ on $\Nat$ is a discrete, tight apartness relation.

\begin{definition}\label{def: setineq}
	If $\C X := (X, =_X, \neq_X)$ and $\C Y := (Y, =_Y, \neq_Y)$ are sets with inequality, a function $f \colon X \to Y$ is \textit{strongly extensional},
	if $f(x) \neq_Y f(x{'}) \To x \neq_X x{'}$, for every $x, x{'} \in X$, and it is an injection, if the converse implication $($also\footnote{Here we consider injections that are already strongly extensional functions.}$)$ holds i.e., $x \neq_X x{'} \To f(x) \neq_Y f(x{'})$, for every $x, x{'} \in X$. 
	Let $(\SetIneq, \StrExtFun)$ be the category of sets with an inequality and strongly extensional functions, and let	$(\SetExtIneq, \StrExtFun)$ be its subcategory with sets with an extensional inequality.	
\end{definition}

\begin{definition}\label{def: canonicalineq}
	Let $\C X := (X, =_X, \neq_X)$ and $\C Y := (Y, =_Y, \neq_Y)$ be sets with inequalities.
	The canonical inequalities on the product $X \times Y$ and the function space
	$\D F(X, Y)$ are given, respectively, by
	$$(x, y) \neq_{X \times Y} (x{'}, y{'}) :\TOT x \neq_X x{'} \vee y \neq_Y y{'},$$
	$$f \neq_{\D F(X, Y)} g :\TOT \exists_{x \in X} \big[f(x) \neq_Y g(x)\big].$$
\end{definition}
The projections $\pr_X, \pr_Y$ associated to $X \times Y$ 
are strongly extensional functions.
It is not easy to give examples of non strongly extensional functions, although we cannot accept
in $\BISH$ that all functions are strongly extensional. E.g., the strong extensionality of 
all functions from a metric space to itself is equivalent to Markov's principle (see~\cite{Di20}, p.~40).
Even to show that a constant function between sets with an inequality is strongly extensional, one needs 
intuitionistic,  and not minimal, logic. An apartness relation $\neq_X$ on a set $(X, =_X)$ is always
extensional (see~\cite{Pe20}, Remark 2.2.6, p.~11). Here we work only with extensional subsets, as in Bridges-Richman Set Theory i.e., the set-theoretic framework of~\cite{BR87, MRR88, BV06}. The use of extensional subsets only allows a smoother treatment of the (extensional) empty subset of a set, and hence a calculus of subsets closer to that of subsets within classical logic. The role of Myhill's axiom of non-choice to the relation between extensional and categorical subsets i.e., sets with an embedding to the given set, is studied in~\cite{MWP24}.

\begin{definition}[Extensional subsets]\label{def: extsubset}
	A formula $P(x)$, where $x$ is a variable of set $(X, =_X)$,
	is an \textit{extensional property}\footnote{Notice that in $\MLTT$ a property on a 
		type $X$ is described as a type family $P \colon X \to \C U$ over $X$, where $\C U$ is a universe and $X \colon \C U$, and it
		is always extensional through the corresponding transport-term (see~\cite{Ho13}, section 2.3).}
	on $X$\index{extensional property on a set}, if 
	$$\forall_{x, y \in X}\big([x =_{X } y \ \& \ P(x)] \To P(y)\big).$$
	The totality $X_P := \{x \in X \mid P(x)\}$ defined by the separation scheme for extensional propertes is called an \textit{extensional subset} of $X$, and its equality is inherited from $=_{X}$. If $\neq_X$ is a given inequality on $X$, then the canonical inequality
	$\neq_{X_P}$ on $X_P$ is inherited from $\neq_{X}$. The totality of extensional subsets of $X$ is denoted by $\C E(X)$.
	If $X_P, X_Q \in \C E(X)$, let $X_P \subseteq X_Q :\TOT \forall_{x \in X}\big(P(x) \To Q(x)\big)$ and $X_P =_{\C E(X)} X_Q :\TOT
	X_P \subseteq X_Q \ \& \ X_Q \subseteq X_P$. We may avoid writing the extensional property $P(x)$ determining an extensional
	subset $A$ of $X$, and in this case we simply write $A \subseteq X$. 
	\end{definition}

Notice that no quantification over the universe $\D V_0$ is required in the membership condition of 
$\C E(X)$. In accordance to the fundamental distinction between \textit{judgments} and \textit{propositions} due to Martin-L\"of, 
if $A, B \in \C E(X)$, the inclusion $A \subseteq B$ is a formula that does
not employ the membership symbol 
i.e., we do not say that for all $x \in X$ if $x \in A$, then $x \in B$, as such a definition would treat the judgments $x \in A$ and $x \in B$ as formulas. If $A := X_P$ and $B := X_Q$, we only need to prove $P(x) \To Q(x)$, for every $x \in X$. There are non-extensional properties on a set i.e., formulas $P(x)$, where $x$ is a variable of the set $X$, such that $P(x)$ and $x =_X y$ do not imply $P(y)$. 
For example, let $n \in \Nat$, $q \in \D Q$ and $P_q(x)$, where $x$ is a variable of set $\Real$, defined by $P_q(x) := x_n =_{\D Q} q.$ If $y \in \Real$ such that $y =_{\Real} x$, then it is not necessary
that $y_n =_{\D Q} q$, if $ x_n =_{\D Q} q$. A set $X$ is an extensional subset of itself, in the sense that it is equal in $\D V_0$ to the extensional subset $X_{=_{X}} := \{x \in X \mid x =_X x\}$.

\begin{definition}\label{def: complement}
The strong complement of a subset $A$ of $X \in \SetExtIneq$ is defined by
$$A^{\neq_X} := \{x \in X \mid \forall_{a \in A}(x \neq_X a)\}.$$
The weak complement of a subset $A$ of $X$ in $\Set$ is defined as the following extensional subset of $X\hspace{-1.5mm}:$
$$A^c := \big\{x \in X \mid \forall_{a \in A}\big(\neg (x =_X a)\big)\big\}.$$
\end{definition}

Sets $\one$ and $\two$ are defined as extensional subsets 
of the primitive set  $\Nat$:
$$\one := \{x \in \Nat \mid x =_{\Nat} 0\} =: \{0\}, \ \ \ \ \ 
\two := \{x \in \Nat \mid x =_{\Nat} 0 \ \vee x =_{\Nat} 1\} =: \{0, 1\}.$$

\begin{remark}\label{rem: boolese}
The standard operations $+, \wedge, \vee \colon \two \times \two \to \two$, and $\swap \colon \two \to \two$ are strongly extensional.
\end{remark}

\begin{proof}
We prove the strong extensionality of $+$ and for $\wedge$ and $\vee$ we work similarly. If $i, j, i{'}, j{'} \in \D2$, we suppose that $i + j \neq_{\D 2} i{'} + j{'}$, and we show that $i \neq_{\D 2} i{'}$ or $j \neq_{\D 2} j{'}$. Suppose that $i + j =_{\D 2} 1$ and $i{'} + j{'} =_{\D 2} 0$. From the first equality we have that $i =_{\D 2} 1$ and $j =_{\D 2} 0$ or $i =_{\D 2} 0$ and $j =_{\D 2} 1$, while from the second we get $i =_{\D 2} j =_{\D 2} = 0$ or $i =_{\D 2} j =_{\D 2} = 1$. In all cases the required disjunction follows trivially. The strong extensionality of $\swap$ is immediate to show.
\end{proof}

As an equality $=_X$ is extensional on $X \times X$, the diagonal $D(X, =_X) := \{(x, y) \in X \times X \mid x =_X y\}$ of $X$ is an extensional subset of $X \times X$. In the following definition of the extensional empty subset of a set $X$ we do not require $X$ to be inhabited, as Bishop does in~\cite{Bi67}, p.~65, for the categorical empty subset. We only require that $X$ is equipped with an extensional inequality. Notice that Bishop never defined the empty set, only the empty subset of an
inhabited set.

\begin{definition}[Extensional empty subsets]\label{def: extempty}
		If $\C X := (X, =_{X}, \neq_{X})$ in in $\SetExtIneq$, 
	the strong empty subset of $X$ is the extensional subset 
	$$\emptys_X := \{x \in X \mid x \neq_X x\}.$$
	We call	$X_P \in \C E(X)$ strongly empty, if $X_P \subseteq \emptys_X$.
	The weak empty subset of $X$ in $\Set$ is the extensional subset 
	$$\emptyset_X := \{x \in X \mid \neg(x =_X x)\}.$$
	We call	$X_P \in \C E(X)$ weakly empty, if $X_P \subseteq \emptyset_X$.
\end{definition}

Notice that within intuitionistic logic $(\INT)$ the inclusion $\ \emptys_X \subseteq X_P$ holds, for every extensional property $P(x)$ on $X$. The extensionality of an equality $=_X$ implies that the inequality $\neg(x =_X y)$ on $(X, =_X)$ is extensional and $\emptyset_X $ is an extensional subset of $X$.
As strong concepts suit better to constructive mathematics, we prefer to work with the strong variants of set-theoretic concepts, rather than the weak ones.
As the definition of the strong empty subset $\emptys_X$ requires that $X$ is equipped with an extensional inequality, we work within the category $(\SetExtIneq, \StrExtFun)$, although many related concepts are definable also within the category $(\SetIneq, \StrExtFun)$. 
For the definition of all the rest basic operations on extensional subsets we refer 
to~\cite{MWP24, Pe24a}.

\section{Complemented subsets}
\label{sec: cs}

Complemented subsets are pairs of strongly disjoint subsets of a set with an (extensional) inequality\footnote{Two subsets $A, B$ of a set $(X, =_X)$ \textit{overlap weakly}, if $A \cap B \neq \emptyset_X$ and they are \textit{weakly disjoint}, if $A \cap B = \emptyset_X$.}. 
	
	\begin{definition}\label{def: extapartsubsets}
		If $\C X := (X, =_X, \neq_X)$ is in $\SetExtIneq$ and 
		$A := X_P, B := X_Q \in \C E(X)$, then the $($strong$)$ overlapping relation $A \between B$ is defined by
		$$A \between B :\TOT \exists_{x \in X_P}\exists_{x{'} \in X_Q}\big(x =_X x{'}\big).$$
		$A$ and $B$ are $($strongly$)$ disjoint with respect to $\neq_X$, if
		$$A \Disj B :\TOT \forall_{x \in X_P}\forall_{x{'} \in X_Q}\big(x \neq_X x{'}\big).$$
		A \textit{complemented subset}\index{complemented subset} of $\C X$ is a pair
		$\B A := (A^1, A^0)$\index{$\B A := (A^1, A^0)$}, 
		where $A^1 := X_{P^1}, A^0 := X_{P^0} \in \C E(X)$ such that $A^1 \Disj A^0$. Its characteristic function $\chi_{\B A} \colon \dom(\B A) \to \two$, where $\dom(\B A) := A^1 \cup A^0$, is defined by 
		$$\chi_{\B A}(x) := \left\{ \begin{array}{ll}
			1   &\mbox{, $P^1(x)$}\\
			0             &\mbox{, $P^0(x)$.}
		\end{array}
		\right. $$
		We call a complemented subset $\B A$ of $X$:\\[1mm]
		\normalfont(i)
		\itshape inhabited, if $A^1$ is inhabited. \\[1mm]
		\normalfont(ii)
		\itshape coinhabited, if $A^0$ is inhabited.\\[1mm]
		\normalfont(iii)
		\itshape  total, if $\dom(\B A) =_{\C E(X)} X$.\\[1mm]
		\normalfont(iv)
		\itshape $1$-tight, if for every $x \in X$, we have that 
		$\neg{(x \in A^1)} \To x \in A^0$.\\[1mm]
		\normalfont(v)
		\itshape $0$-tight, if for every $x \in X$, we have that 
		$\neg{(x \in A^0)} \To x \in A^1$. 	\\[1mm]
		We denote by $\C E^{\Disj}(X)$\index{$\C P^{\Disj}(X)$} the totality of
		complemented extensional subsets of $X$.
		If $\B A, \B B \in \C E^{\Disj}(X)$, let
		$\B A \subseteq \B B : \TOT A^1 \subseteq B^1 \ \& \ B^0 \subseteq A^0$ and let
		$\B A =_{\C E^{\mathsmaller{\Disj}} (X)} \B B : \TOT \B A \subseteq \B B \ \& \ \B B \subseteq \B A$.  
	\end{definition}

	If the elements of $A^1$ are the ``provers'' of $\B A$ and the elements of $A^0$ are the ``refuters'' of $\B A$, then the inequality $\B A \subseteq \B B$ means that all provers of $\B A$ prove $\B B$ and all refuters of $\B B$ refute $\B A$ i.e., $\B B$ has more provers and less refuters than $\B A$.

	\begin{proposition}\label{prp: intcomp1}
	Let $\C X := (X, =_X, \neq_X)$ in $\SetExtIneq$ and $\B A := (A^1, A^0) \in \C E^{\Disj}(X)$.\\[1mm]
	\normalfont (i)
	\itshape $A^1 \cap A^0 \subseteq \emptys_X$.\\[1mm]
	\normalfont (ii) 
	\itshape $(\INT)$ $\emptys_X \subseteq A^1 \cap A^0$.
	\end{proposition}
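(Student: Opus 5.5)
For (i) we unfold the definitions and use the strong disjointness $A^1 \Disj A^0$ directly. The intersection $A^1 \cap A^0$ is the extensional subset $\{x \in X \mid P^1(x) \ \& \ P^0(x)\}$, so the inclusion $A^1 \cap A^0 \subseteq \emptys_X$ amounts to showing $P^1(x) \ \& \ P^0(x) \To x \neq_X x$ for every $x \in X$. Given such an $x$, view it as an element of $X_{P^1}$ and also as an element of $X_{P^0}$. Instantiating the definition of $A^1 \Disj A^0$ with this pair, namely $\forall_{x \in X_{P^1}}\forall_{x{'} \in X_{P^0}}(x \neq_X x{'})$ at $x{'} := x$, gives exactly $x \neq_X x$. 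This step uses only minimal logic. Extensionality of $\neq_X$ ($\Ineq_2$) is not even needed here; it only guarantees that $\emptys_X$ is an extensional subset, as noted in Definition~\ref{def: extempty}.

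For (ii) we must show $x \neq_X x \To P^1(x) \ \& \ P^0(x)$ for every $x \in X$. Assume $x \neq_X x$. Since $x =_X x$ holds by reflexivity of the equality, the condition $(\Ineq_1)$ applied to the pair $(x, x)$ yields $\bot$, i.e.\ $0 =_{\Nat} 1$. By the Ex falso principle, which is available in $\INT$, we conclude both $P^1(x)$ and $P^0(x)$, hence $x \in A^1 \cap A^0$. This is precisely the remark after Definition~\ref{def: extempty} that within $\INT$ we have $\emptys_X \subseteq X_P$ for every extensional property $P$. Applying it to $P^1$ and to $P^0$ separately gives the claim.

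There is no real obstacle. The only point to watch is the bookkeeping of logic: (i) is valid in $\MIN$, while (ii) genuinely uses $\EFQ$ to pass from $\bot$ to the arbitrary formulas $P^1(x)$ and $P^0(x)$. Hence (ii) is marked $(\INT)$, and (i) is not.
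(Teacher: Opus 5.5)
Your proof is correct and follows the paper's argument: for (i) you instantiate the strong disjointness $A^1 \Disj A^0$ at the same point $x$, and for (ii) you use $\EFQ$. Your explicit step of applying $(\Ineq_1)$ with $x =_X x$ to obtain $\bot$ before invoking $\EFQ$ spells out what the paper leaves implicit. Your remark that (i) holds within $\MIN$ also matches how the paper later relies on it in Proposition~\ref{prp: bcs1}(iv).
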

	
	\begin{proof}
	(i) If $x \in X$ such that $P^1(x)$ and $P^0(x)$, where $P^1$ and $P^0$ are the extensional properties on $X$ that determine $A^1$ and $A^0$, respectively, then by the definition of $A^1 \Disj A^0$ we get $x \neq_X x$ i.e., $x \in \emptys_X$.\\
	(ii) It follows trivially by $\EFQ$.
	\end{proof}
	
	All operations between extensional complemented subsets
	are determined by the corresponding definitions of operations between extensional subsets.
	In this paper we shall use only the operations of the so-called in~\cite{PW22} \textit{second algebra of complemented subsets} $\C B_2^e(X) := \big(\C E^{\Disj}(X), \vee, \wedge, -)$ of $X$, as the first algebra $\C B_1^e(X) := \big(\C E^{\Disj}(X), \cup, \cap, -)$ does not suit well to swap rings (it suits well though, to constructive topologies of open complemented subsets (see~\cite{Pe24a})).

	\begin{definition}\label{def: opers}
		Let $\C X := (X, =_X, \neq_X), \C Y := (Y, =_Y, \neq_Y) \in \SetIneq$.
		If $\B A, \B B \in \C E^{\Disj}(X)$, let
		$$\B A \vee \B B  := \big((A^1 \cap B^1) \cup (A^1 \cap B^0) \cup (A^0 \cap B^1), A^0 \cap B^0\big),$$ 
		$$\B A \wedge \B B := \big((A^1 \cap B^1),  (A^1 \cap B^0) \cup (A^0 \cap B^1) \cup (A^0 \cap B^0)\big), $$
		$$ \ \ \ - \B A := (A^0, A^1), \ \ \ \ \ \ \B A - \B B := \B A \wedge (-\B B),$$
		\end{definition}

	With the above notion of complement of a complemented subset we recover constructively properties of classical subsets that are based on classical logic. Namely, we have that 
	$$\B A \subseteq \B B \TOT -\B B \subseteq - \B A, \ \ \ \ -(-\B A) := \B A, \ \ \ \ (X, \emptys_X) - \B A =_{\C E^{\Disj}(X)} -\B A.$$
   The alternative definitions of the join and the meet of complemented subsets in the first algebra $\C B_1^e(X) := \big(\C E^{\Disj}(X), \cup, \cap, -)$ are the following:
	$$\B A \cup \B B  := \big(A^1 \cup B^1, A^0 \cap B^0\big), \ \ \ \ \ \B A \cap \B B  := \big(A^1 \cap B^1, A^0 \cup B^0\big).$$
	In contrast to the standard one-dimensional subset theory, where only one empty subset $\emptys_X$ of a set $X$ exists, in the two-dimensional complemented subset theory there are many ``empty'' complemented subsets and equally many ``coempty'' subsets.

	\begin{definition}\label{def: zeros}
		If $\C X := (X, =_X, \neq_X)$ is in $\SetExtIneq$, and if $\B A \in \C E^{\Disj}(X)$, let 
		$$0_{\B X} := ( \emptys_X, X), \ \ \ \ 1_{\B X} := (X,  \emptys_X), \ \ \ \ 
		1_{\B A} :=  (\dom(\B A), \emptys_X), \ \ \ \  0_{\B A} := \big(\emptys_X, \dom(\B A)\big).$$
		The complemented subsets $0_{\B X} $ and of the form $0_{\B A}$ are called empty, while the 
		complemented subsets $1_{\B X} $ and of the form $1_{\B A}$ are called coempty. Let their corresponding sets
		$$\EEmpty(\C X) := \{\B A := (A^1, A^0) \in \C E^{\Disj}(X) \mid A^1 =_{\C E(X)} \emptys_X\},$$
		$$\coEmpty(\C X) := \{\B A := (A^1, A^0) \in \C E^{\Disj}(X) \mid A^0 =_{\C E(X)} \emptys_X\}.$$
	\end{definition}
	
	For the straightforward proof of the next proposition we repeatedly use $\EFQ$ to show the inclusion $\ \emptys_X \subseteq A$, where $A$ is an extensional subset of a set $X$ with an extensional inequality.
	
	\begin{proposition}[$\INT$]\label{prp: isswapa1} If $\C X := (X, =_X, \neq_X)$ is in $\SetExtIneq$ and $\B A \in \C E^{\Disj}(X)$, the following hold:\\[1mm]
		\normalfont(i)
		\itshape
		$0_{\B X}$ and $1_{\B X}$ are the bottom and top elements of $\C E^{\Disj}(X)$, respectively.\\[1mm]
		\normalfont(ii)
		\itshape
		$-0_{\B X} := 1_{\B X}$ and $-0_{\B A} := 1_{\B A}$.\\[1mm]
		\normalfont(iii)
		\itshape
		$\B A \vee (- \B A) =_{\mathsmaller{\C E^{\Disj}(X)}} 1_{\B A}$ and 
		$\B A \wedge (- \B A) =_{\mathsmaller{\C E^{\Disj}(X)}} 0_{\B A}$.\\[1mm]
		\normalfont(iv)
		\itshape
		$0_{\B X} \vee \B A =_{\mathsmaller{\C E^{\Disj}(X)}} \B A =_{\mathsmaller{\C E^{\Disj}(X)}} 
		0_{\B A} \vee \B A$.\\[1mm]
		\normalfont(v)
		\itshape
		$0_{\B X} \wedge \B A =_{\mathsmaller{\C E^{\Disj}(X)}} 0_{\B X}$ and
		$0_{\B A} \wedge \B A =_{\mathsmaller{\C E^{\Disj}(X)}} 0_{\B A}$.\\[1mm]
		\normalfont(vi)
		\itshape
		$1_{\B X} \vee \B A =_{\mathsmaller{\C E^{\Disj}(X)}} 1_{\B X}$ and $1_{\B A} \vee
		\B A =_{\mathsmaller{\C E^{\Disj}(X)}} \B 1_A$.\\[1mm]
		\normalfont(vii)
		\itshape  
		$1_{\B X} \wedge \B A =_{\mathsmaller{\C E^{\Disj}(X)}} \B A =_{\mathsmaller{\C E^{\Disj}(X)}} 1_{\B A} \wedge	\B A$.\\[1mm]
		\normalfont(viii)
		\itshape 
		$0_{-\B A} =_{\mathsmaller{\C E^{\Disj}(X)}} 0_{\B A}$.\\[1mm]
		\normalfont(ix)
		\itshape
		$0_{0_{\B X}} =_{\mathsmaller{\C E^{\Disj}(X)}} 0_{\B X}$ and
		$1_{1_{\B X}} =_{\mathsmaller{\C E^{\Disj}(X)}} 1_{\B X}$.\\[1mm]
		\normalfont(x)
		\itshape
		$0_{0_{\B A}} =_{\mathsmaller{\C E^{\Disj}(X)}} 0_{\B A} =_{\mathsmaller{\C E^{\Disj}(X)}} 0_{1_{\B A}}$.
		\end{proposition}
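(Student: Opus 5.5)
Every clause of the proposition is an equality $=_{\C E^{\Disj}(X)}$, that is, a pair of inclusions of extensional subsets. Given Definitions~\ref{def: opers} and~\ref{def: zeros}, each such inclusion reduces to comparing finite Boolean combinations of $A^1, A^0, X, \emptys_X$. I will therefore unfold the definitions componentwise and prove each inclusion of extensional subsets by working with the defining properties. Every inclusion that can fail without extra logic falls into one of two patterns: $\emptys_X \subseteq C$ for some extensional $C$, which is given by $\EFQ$ (this is the only place $\INT$ is needed), and $A^1 \cap A^0 \subseteq \emptys_X$, which is Proposition~\ref{prp: intcomp1}(i). The remaining inclusions are pure lattice facts about $\cap$ and $\cup$ of extensional subsets.

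For (i), the claim $0_{\B X} \subseteq \B A$ means $\emptys_X \subseteq A^1$ and $A^0 \subseteq X$. The first holds by $\EFQ$ and the second is trivial; the case of $1_{\B X}$ is symmetric. Clause (ii) is immediate from $-(A^1,A^0) := (A^0,A^1)$. For (iii), compute
\[
\B A \vee (-\B A) = \big((A^1\cap A^0)\cup (A^1 \cap A^1)\cup (A^0\cap A^0),\ A^0\cap A^1\big).
\]
The first component equals $A^1 \cup A^0 = \dom(\B A)$: one direction is trivial, and in the other $A^1\cap A^0 \subseteq A^1$. The second component equals $\emptys_X$, using Proposition~\ref{prp: intcomp1}(i) for one inclusion and $\EFQ$ for the other. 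The meet is dual.

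For (iv)--(vii), I expand, for instance,
\[
0_{\B X}\vee \B A = \big((\emptys_X\cap A^1)\cup(\emptys_X\cap A^0)\cup(X\cap A^1),\ X\cap A^0\big).
\]
Each term meeting $\emptys_X$ collapses into whatever set is needed, since $\emptys_X \cap C \subseteq \emptys_X \subseteq D$ by $\EFQ$. The $X\cap$ terms reduce to $A^1$ and $A^0$. For the versions with $0_{\B A}$ and $1_{\B A}$, the factor $\dom(\B A)\cap A^i$ reduces to $A^i$ in the same way. Clauses (viii)--(x) concern only domains: $\dom(-\B A) = A^0\cup A^1 = \dom(\B A)$, $\dom(0_{\B X}) = \emptys_X\cup X = X$, and $\dom(0_{\B A}) = \emptys_X \cup \dom(\B A) = \dom(\B A) = \dom(1_{\B A})$. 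In each case the absorption of $\emptys_X$ uses $\EFQ$ in one direction only.

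The main difficulty is bookkeeping rather than any single hard step: the join and meet of the second algebra each contain three terms, and every case must be checked component by component. The one conceptual point is that removing $A^1\cap A^0$ always goes through Proposition~\ref{prp: intcomp1}(i) and then $\EFQ$. Clause (iii) is where this matters most, since it is exactly where $1_{\B A}$ rather than $1_{\B X}$ appears. I would write out (iii) and (iv) in full and treat the rest as analogous.
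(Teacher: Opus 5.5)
Your method is the paper's own. Its entire proof is the remark that each clause is a straightforward componentwise check in which $\EFQ$ is used repeatedly for inclusions $\emptys_X \subseteq C$. Your unfolding, with Proposition~\ref{prp: intcomp1}(i) handling $A^1\cap A^0\subseteq\emptys_X$, is exactly that check. The argument goes through as you describe for (i)--(iv), for the second conjuncts of (v) and (vi), and for (vii)--(x).

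The gap is in calling the rest of (iv)--(vii) analogous. For the \emph{first} conjuncts of (v) and (vi), the bookkeeping you skipped does not close. In fact these two equalities are false for the operations of Definition~\ref{def: opers}. Carrying out your own reductions gives
\[
0_{\B X}\wedge\B A=\big(\emptys_X\cap A^1,\ (\emptys_X\cap A^0)\cup(X\cap A^1)\cup(X\cap A^0)\big)=_{\mathsmaller{\C E^{\Disj}(X)}}\big(\emptys_X,\dom(\B A)\big)=0_{\B A},
\]
\[
1_{\B X}\vee\B A=\big((X\cap A^1)\cup(X\cap A^0)\cup(\emptys_X\cap A^1),\ \emptys_X\cap A^0\big)=_{\mathsmaller{\C E^{\Disj}(X)}}\big(\dom(\B A),\emptys_X\big)=1_{\B A}.
\]
This agrees with Remark~\ref{rem: basicequal}, which gives $\dom(0_{\B X}\wedge\B A)=X\cap\dom(\B A)$. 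Obtaining $0_{\B X}$, resp.\ $1_{\B X}$, would require $X\subseteq\dom(\B A)$, i.e.\ $\B A$ total.

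Here is a concrete counterexample. Take $X:=\Nat$ and $\B A:=(\{0\},\{1\})$. Then $0_{\B X}\wedge\B A=_{\mathsmaller{\C E^{\Disj}(X)}}0_{\B X}$ would need $2\in(\emptys_{\Nat}\cap\{1\})\cup\{0\}\cup\{1\}$. That means $2=_{\Nat}0$, $2=_{\Nat}1$ or $2\neq_{\Nat}2$, each of which is refutable.

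The two conjuncts also clash with Proposition~\ref{prp: swapalgII1}(3),(4), which give $0\wedge a=_A 0_a$ and $1\vee a=_A 1_a$ in any swap algebra of type $(\tii)$. Together with the stated clauses, they would force $0_{\B A}=_{\mathsmaller{\C E^{\Disj}(X)}}0_{\B X}$ for every $\B A$.

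The clauses as printed do hold in two situations. One is the first algebra's $\cup,\cap$, where $0_{\B X}\cap\B A=(\emptys_X\cap A^1, X\cup A^0)$ and $1_{\B X}\cup\B A=(X\cup A^1,\emptys_X\cap A^0)$. The other is when $\B A$ is total.

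So you should do one of the following:
\begin{itemize}
\item add the hypothesis that $\B A$ is total for those two conjuncts;
\item replace them by $0_{\B X}\wedge\B A=_{\mathsmaller{\C E^{\Disj}(X)}}0_{\B A}$ and $1_{\B X}\vee\B A=_{\mathsmaller{\C E^{\Disj}(X)}}1_{\B A}$, which your method then proves verbatim.
\end{itemize}
The paper's one-line proof does not notice this either. Still, declaring these cases analogous is precisely the step in your proposal that cannot be carried out.
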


	\section{Boolean-valued, partial functions}
	\label{sec: bvpf}

	\begin{definition}\label{def: extpartialfunction}
		Let $X, Y$ be in $\SetExtIneq$. 
		A \textit{partial function}\index{partial function} from $X$ to $Y$ is a pair $\B f := (A, f)$, where $A =: \dom(\B f) \in \C E(X)$ and $f \colon A \to Y$ is a $($total$)$ function. We may also use the notation $\B f \colon X \pto Y$.
		We call $\B f$ total, if $\dom(\B f) =_{\C E(X)} X$. If $\B g := (B, g)$, where $B \in \C E(X)$ and $g \colon B \to Y$, let
		$$\B f \leq \B g :\TOT A \subseteq  B \ \wedge \ \forall_{x \in A}\big(f(x) =_Y g(y)\big).$$
		The \textit{partial function space} $\B F(X,Y)$ is
		equipped with the equality $\B f =_{\B F(X,Y)} \B g :\TOT \B f \leq \B g \ \& \ \B g \leq \B f$.
		If $\neq_X$ and $\neq_Y$ are inequalities on $X, Y$, respectively, let
		$\B F^{\se}(X, Y)$ be the totality of strongly extensional, partial functions from $X$ to $Y$. Accordingly, $(\B F^{\se}(X, \two)) \ \B F(X, \two)$ is the totality
		of $($strongly extensional$)$ \textit{Boolean-valued, partial functions} on $X$. The partial $($Boolean$)$ dual set of $X$ is the totality $\B X^* := \B F(X,\two)$. Let $\sim \B f := 1 - \B f$, and
		$\B f + \B g$, $\B f \vee \B g := \max\{\B f, \B g\}$, $\B f \cdot \B g := \B f \wedge \B g := \min\{\B f, \B g\}$ on
		$\dom(\B f) \cap \dom(\B g)$, for every $\B f, \B g \in X^*$.
		The strongly extensional, partial dual of $X$ is the totality 
		$$\B X^{\circledast} := \{\B f \in \B X^*  \mid f \ \mbox{is strongly extensional}\}.$$
		Let $\B X^{\circledast \circledast} := \big[\B X^{\circledast}\big]^{\circledast}$ be the second strongly extensional, $($Boolean$)$ partial dual of $X$. 
		\end{definition}

\begin{remark}\label{rem: seops}
\normalfont (i)
\itshape If $\B A \in \C E^{\Disj}(X)$, then $\B \chi_{\B A}  := (\dom(\B A), \chi_{\B A}) \in \B X^{\circledast}$.\\[1mm]
\normalfont (ii)
\itshape If $\B f, \B g \in \B X^{\circledast}$, then $\sim \B f, \B f + \B g, \B f \vee \B g, \B f \cdot \B g$ and $\B f \wedge \B g$
are in $\B X^{\circledast}$.
\end{remark}
	
\begin{proof}
(i) Suppose that $\chi_{\B A}(x) =_{\D 2} 1$ and $\chi_{\B A}(y) =_{\D 2} 0$, for some $x, y \in \dom(\B A)$. Hence, $x \in A^1$ and $y \in A^0$, and since $A^1 \Disj A^0$, we get $x \neq_X y$.\\
(ii) We show only that $\B f + \B g \in \B X^{\circledast}$ and for the rest cases we work similarly. Let $x, y \in X$ such that
$f(x) + g(x) \neq_{\D 2} f(y) + g(y)$. By Remark~\ref{rem: boolese} $f(x) \neq_{\D 2} f(y)$ or $g(x) \neq_{\D 2} g(y)$, hence by strong extensionality of $f$ and $g$ we get in both possible cases that $x \neq_{\D 2} y$.
\end{proof}

	If $\neq_Y$ is an inequality on $Y$, an inequality on
	$\B F(X,Y)$ was defined in~\cite{MWP24} by
	$$\B f =_{\mathsmaller{\B F(X,Y)}} \B g :\TOT X_P \neq_{\mathsmaller{\C E(X)}} X_Q \vee \big[X_P =_{\mathsmaller{\C E(X)}} X_Q \wedge \exists_{x \in X_P}\big(f(x) \neq_Y g(x)\big)\big].$$
	Next follows an equivalent formulation of the equality between Boolean-valued partial functions that allows the definition of a corresponding inequality relation between them in a natural way.

\begin{definition}\label{def: equalpf}
Let the equality and inequality on $\B X^*$ $($and similarly on $\B X^{\circledast})$ given by 
\begin{align*}
\B f =_{\B X^*} \B g & :\TOT \forall_{x \in X}\bigg[\big(x \in \dom(\B f) \To x \in \dom(\B g)\big) \wedge \\
& \ \ \ \ \ \ \ \ \ \ \ \ \ \big(x \in \dom(\B g) \To x \in \dom(\B f)\big) \wedge \\
& \ \ \ \ \ \ \ \ \ \ \ \ \ \big(x \in \dom(\B f) \cap \dom(\B g) \To f(x) =_{\two} g(x)\big)\bigg],
\end{align*}
\begin{align*}
	\B f \neq_{\B X^*} \B g & :\TOT \exists_{x \in X}\bigg[\big(x \in \dom(\B f) \wedge x \in \dom(\B g)^{\neq_X}\big) \vee \\
	& \ \ \ \ \ \ \ \ \ \ \ \ \ \big(x \in \dom(\B g) \wedge x  \in \dom(\B f)^{\neq_X}\big) \vee \\
	& \ \ \ \ \ \ \ \ \ \ \ \ \ \big(x \in \dom(\B f) \cap \dom(\B g) \wedge f(x) \neq_{\two} g(x)\big)\bigg].
\end{align*}
If $\B f \neq_{\B X^*} \B g$ and $x_0 \in X$, such that $x_0$ witnesses the inequality $\B f \neq_{\B X^*} \B g$, we write $x_0 \colon \B f \neq_{\B X^*} \B g$.
\end{definition}

A weak version of the the definition of $f \neq_{\B X^*} g$ would employ weak negation, as follows:
\begin{align*}
	& \exists_{x \in X}\bigg[\big(x \in \dom(\B f) \wedge x \notin \dom(\B g)\big) \vee \\
	& \ \ \ \ \ \ \ \  \big(x \in \dom(\B g) \wedge x \notin \dom(\B f)\big) \vee \\
	& \ \ \ \ \ \ \ \  \big(x \in \dom(\B f) \cap \dom(\B g) \wedge f(x) \neq_{\two} g(x)\big)\bigg].
\end{align*}
Notice the similarity between Definition~\ref{def: equalpf} and the definition of equality and inequality between functions in Definition~\ref{def: function} and Definition~\ref{def: canonicalineq}, respectively. 
In general, we cannot show that  $\B f \neq_{\B X^*} \B g$ is an apartness relation, although there are not many examples of this phenomenon\footnote{Another such inequality is the canonical inequality on the exterior union of a family of sets (see~\cite{Pe20}, p.~43).}. If $\B f \neq_{\B X^*} \B g$ was an apartness relation, it would be an extensional inequality (see~\cite{Pe20}, p.~11).  Moreover, we cannot prove, in general, that the inequality $\B f \neq_{\B X^*} \B g$ is tight. It is straightforward to show though, that if $\dom(\B f)$ and $\dom(\B g)$ are tight subsets\footnote{A subset $A$ of $X \in \SetExtIneq$ is called tight, if $(A^{\neq_X})^c \subseteq A$. Tight subsets are introduced in~\cite{Pe24b} and are related to tight formulas studied in~\cite{KP23}.}, then $\neg\big(\B f \neq_{\B X^*} \B g\big) \To \B f =_{\B X^*} \B g$, as $\neq_{\D 2}$ is a tight inequality. Next we prove the extensionality of $\neq_{\B X^*}$, and hence the extensionality of $\neq_{\B X^{\circledast}}$.

\begin{proposition}\label{prp: ext1}
\label{prp: ineqext} 
The inequality relation $f \neq_{\B X^*} g$ is extensional.
\end{proposition}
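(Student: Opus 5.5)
We verify condition $(\Ineq_2)$ of Definition~\ref{def: apartness} directly. Assume $\B f =_{\B X^*} \B f{'}$, $\B g =_{\B X^*} \B g{'}$ and $\B f \neq_{\B X^*} \B g$. By Definition~\ref{def: equalpf} we obtain a witness $x_0 \colon \B f \neq_{\B X^*} \B g$. We will show that the same $x_0$ witnesses $\B f{'} \neq_{\B X^*} \B g{'}$. From the equalities we extract three facts. First, $\dom(\B f) =_{\C E(X)} \dom(\B f{'})$. Second, $\dom(\B g) =_{\C E(X)} \dom(\B g{'})$. Third, $f$ and $f{'}$ agree on the common domain, and likewise $g$ and $g{'}$. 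We then proceed by cases on the disjunction that $x_0$ satisfies.

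\emph{Case 1:} $x_0 \in \dom(\B f)$ and $x_0 \in \dom(\B g)^{\neq_X}$. The first conjunct gives $x_0 \in \dom(\B f{'})$ directly. For the second, note that the strong complement in Definition~\ref{def: complement} respects equality of extensional subsets. If $y \in \dom(\B g{'})$, then $y \in \dom(\B g)$, hence $x_0 \neq_X y$. So $x_0 \in \dom(\B g{'})^{\neq_X}$. Only the inclusion $\dom(\B g{'}) \subseteq \dom(\B g)$ is needed here, and extensionality of the properties defining the domains is used implicitly.

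\emph{Case 2:} $x_0 \in \dom(\B g)$ and $x_0 \in \dom(\B f)^{\neq_X}$. This is symmetric to Case 1.

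\emph{Case 3:} $x_0 \in \dom(\B f) \cap \dom(\B g)$ and $f(x_0) \neq_{\two} g(x_0)$. Then $x_0 \in \dom(\B f{'}) \cap \dom(\B g{'})$, and the equalities give $f{'}(x_0) =_{\two} f(x_0)$ and $g{'}(x_0) =_{\two} g(x_0)$. It remains to conclude $f{'}(x_0) \neq_{\two} g{'}(x_0)$. This uses the extensionality of $\neq_{\two}$, which is inherited from $\neq_{\Nat}$ since the latter is an apartness relation and hence extensional. Alternatively, we can argue by the decidable cases $0,1$ of $\two$.

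I expect no real obstacle. The only point requiring care is the transport of the strong complement $\dom(\B g)^{\neq_X}$ along the equality $\dom(\B g) =_{\C E(X)} \dom(\B g{'})$ in Case 1. There one must use the correct direction of inclusion, and notably no extensionality of $\neq_X$ is needed for that step. The argument for $\B X^{\circledast}$ is identical, since its equality and inequality are the restrictions of those on $\B X^*$.
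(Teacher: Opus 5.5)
Your proof is correct and matches the paper's argument: the same witness $x_0$ is shown to witness $\B f{'} \neq_{\B X^*} \B g{'}$, by cases on the three disjuncts, transporting domain membership and values along the given equalities. Your Case 1 argument, which uses only $\dom(\B g{'}) \subseteq \dom(\B g)$, and your Case 3 appeal to the extensionality of $\neq_{\two}$ just make explicit the steps the paper passes over quickly.
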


\begin{proof}
If $\B f =_{\B X^*} \B f{'}$, $\B g =_{\B X^*} \B g{'}$, and $\B f \neq_{\B X^*} \B g$, we show that $\B f{'} \neq_{\B X^*} \B g{'}$. If $x_0 \colon \B f \neq_{\B X^*} \B g$, we show that 
$x_0 \colon \B f{'} \neq_{\B X^*} \B g{'}$. Suppose first that $x_0 \in \dom(\B f) \wedge x_0 \in \dom(\B g)^{\neq_X}$. As $\B f =_{\B X^*} \B f{'}$, we get  $x_0 \in \dom(\B f{'})$.
As $x_0 \in \dom(\B g)^{\neq_X}$, by the equality $\B g =_{\B X^*} \B g{'}$ we have that $\dom(\B g) =_ {\C E(X)} \dom(\B g{'})$, and hence $x_0 \in \dom(\B g)^{\neq_X}$ too. If $x _0 \in \dom(\B g) \wedge x_0  \in \dom(\B f)^{\neq_X}$,
we work similarly. If $x_0 \in \dom(\B f) \cap \dom(\B g) \wedge f(x_0) \neq_{\two} g(x_0)$, then by the supposed equalities we have that $x_0 \in \dom(\B f{'}) \cap \dom(\B g{'})$. As $x_0 \in \dom(\B f) \cap \dom(\B f{'})$ and $x_0 \in \dom(\B g) \cap \dom(\B g{'})$, we get $f{'}(x_0) =_{\two} f(x_0) \neq_{\two} g(x_0) =_{\two} g{'}(x_0)$.
\end{proof}

 The above definitions of equality and inequality for Boolean-valued, partial functions induce a notion of equality and inequality between complemented subsets. 
 For the case of equality, we recover the equality of complemented subsets given in Definition~\ref{def: extapartsubsets}. For the case of inequality we get a property similar to the following definition of inequality between extensional subsets of a set.
 
 \begin{definition}\label{def: subineq}
 If $X \in \SetExtIneq$, let its extensional subsets $\C E(X)$ be equipped with the inequality
 $$A \neq_{\C E(X)} B :\TOT \exists_{x \in X}\bigg[\big(x \in A \wedge x \in B^{\neq_X}\big) \vee\big(x \in B \wedge x \in A^{\neq_X}\big)\bigg].$$
 \end{definition}
 
 It is immediate to check that the inequality $A \neq_{\C E(X)} B$ is extensional.

 \begin{definition}\label{def: csineq} Let 
 		$\B A \simeq_{\C E^{\Disj}(X)} \B B  :\TOT \B \chi_{\B A} =_{X^{\circledast}} \B \chi_{\B B}$, and 	let
 	$\B A \neq_{\C E^{\Disj}(X)} \B B :\TOT \chi_{\B A} \neq_{X^{\circledast}} \chi_{\B B}$.
 	 \end{definition}

 	\begin{remark}\label{rem: equivs}
 		\normalfont (i)
 		\itshape $\B A \simeq_{\C E^{\Disj}(X)} \B B  \TOT \B A =_{\C E^{\Disj}(X)} \B B$.\\[1mm]
 			\normalfont (ii)
 		\itshape $\B A \neq_{\C E^{\Disj}(X)} \B B$ if and only if the following condition holds:
 	\begin{align*}
 		& \exists_{x \in X}\bigg[\big(x \in A^1 \cup A^0 \wedge x \in (B^1 \cup B^0)^{\neq_X}\big) \vee \\
 		& \ \ \ \ \ \ \ \ \big(x \in B^1 \cup B^0 \wedge x \in (A^1 \cup A^0)^{\neq_X}\big) \vee \\
 		& \ \ \ \ \ \ \ \ \big(x \in (A^1 \cup A^0) \cap (B^1 \cup B^0) \wedge \chi_{\B A}(x) \neq_{\two} \chi_{\B B}(x)\big)\bigg].
 	\end{align*}
 	\end{remark}

 	\begin{proof}
 	(i)  By Definition~\ref{def: equalpf} we have that $\B \chi_{\B A} =_{X^{\circledast}} \B \chi_{\B B}$ if and only if
 	\begin{align*}
 		& \forall_{x \in X}\bigg[\big(x \in A^1 \cup A^0 \To x \in B^1 \cup B^0\big) \wedge \\
 		& \ \ \ \ \ \ \ \ \ \ \ \ \ \big(x \in B^1 \cup B^0 \To x \in A^1 \cup A^0\big) \wedge \\
 		& \ \ \ \ \ \ \ \ \ \ \ \ \ \big(x \in (A^1 \cup A^0) \cap (B^1 \cup B^0) \To \chi_{\B A}(x) =_{\two} \chi_{\B B}(x)\big)\bigg],
 	 	\end{align*}
 	 	which is trivially equivalent to $\B A =_{\C E^{\Disj}(X)} \B B$.\\
 	(ii) It follows immediately by Definition~\ref{def: equalpf}.
 	\end{proof}
 	
 \begin{definition}\label{def: notation} If $\B A \neq_{\C E^{\Disj}(X)} \B B$, and if, according to Remark~\ref{rem: equivs}, $x_0 \in X$ witnesses this inequality, then we may write $x_0 \colon \B A \neq_{\C E^{\Disj}(X)} \B B$.
\end{definition}

\begin{proposition}\label{prp: ext2}
	Let $\B A, \B B \in \C E^{\Disj}(X)$.\\[1mm]
	\normalfont (i)
	\itshape The inequality relation $\B A \neq_{\C E^{\Disj}(X)} \B B$ is extensional.\\[1mm]
		\normalfont (ii)
	\itshape $\B A$ is total if and only if $1_{\B A}  =_{\C E^{\Disj}(X)} 1_{\B X}$.\\[1mm]
	\normalfont (iii)
	\itshape $\B A  \neq_{\C E^{\Disj}(X)} 0_{\B X} \TOT \exists_{x \in X}\big(x \in \dom(\B A)^{\neq_{X}} \vee x \in A^1\big)$.\\[1mm]
	\normalfont (iv)
	\itshape $\B A  \neq_{\C E^{\Disj}(X)} 1_{\B X} \TOT \exists_{x \in X}\big(x \in \dom(\B A)^{\neq_{X}} \vee x \in A^0\big)$.\\[1mm]
		\normalfont (v)
	\itshape $A$ is inhabited if and only if $A \neq_{\C E^{\Disj}(X)} 0_{\B A}$.\\[1mm]
		\normalfont (vi)
	\itshape $A$ is coinhabited if and only if $A \neq_{\C E^{\Disj}(X)} 1_{\B A}$.
\end{proposition}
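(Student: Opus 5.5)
Every item is proved by unfolding Definition~\ref{def: csineq} through Remark~\ref{rem: equivs}, which turns each (in)equality of complemented subsets into a statement about the domains $A^1\cup A^0$ and the characteristic functions. The one tool needed throughout is the fact that $\emptys_X$ contributes nothing to a union: if $x\in\emptys_X$ then $x\neq_X x$.

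For (i) I would reduce directly to Proposition~\ref{prp: ext1}. Suppose $\B A=_{\C E^{\Disj}(X)}\B A'$, $\B B=_{\C E^{\Disj}(X)}\B B'$ and $\B A\neq\B B$. By Remark~\ref{rem: equivs}(i) we get $\B\chi_{\B A}=_{\B X^{\circledast}}\B\chi_{\B A'}$ and $\B\chi_{\B B}=_{\B X^{\circledast}}\B\chi_{\B B'}$. Extensionality of $\neq_{\B X^*}$ then gives $\B\chi_{\B A'}\neq\B\chi_{\B B'}$, that is, $\B A'\neq\B B'$.

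For (ii) I unfold $1_{\B A}=(\dom(\B A),\emptys_X)$ and $1_{\B X}=(X,\emptys_X)$. Their equality amounts to $\dom(\B A)=_{\C E(X)}X$, since the $0$-parts coincide. This is exactly totality; the inclusion $\dom(\B A)\subseteq X$ is trivial.

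For (iii) I use $\dom(0_{\B X})=\emptys_X\cup X$, which is $X$ up to the trivial inclusion, and note that $\chi_{0_{\B X}}$ takes the value $0$ on $X$. Then the three disjuncts of Remark~\ref{rem: equivs}(ii) become the following.
\begin{itemize}
\item[(a)] $x\in\dom(\B A)$ and $x\in X^{\neq_X}$. Here $x\neq_X x$, so $x$ lies in $\emptys_X$. The hypothesis $x\in\dom(\B A)$ gives $x\in A^1$ or $x\in A^0$. The case $x\in A^0$ is the delicate one; I would handle it by noting that the inhabitant of $X^{\neq_X}$ is apart from every element, in particular from itself.
\item[(b)] $x\in X$ and $x\in\dom(\B A)^{\neq_X}$, which is one of the desired disjuncts directly.
\item[(c)] $x\in\dom(\B A)$ and $\chi_{\B A}(x)\neq 0$, which forces $x\in A^1$.
\end{itemize}
For the converse, $x\in\dom(\B A)^{\neq_X}$ gives witness (b), and $x\in A^1$ gives witness (c).

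The main obstacle is disjunct (a) in the case $x\in A^0$, since concluding $x\in A^1$ there seems to need $\EFQ$. To stay within minimal logic, I would instead observe that $x\in X^{\neq_X}$ together with $x\in\dom(\B A)$ yields $x\neq_X x$, so $x$ is apart from every element of $\dom(\B A)$. Hence $x\in\dom(\B A)^{\neq_X}$, which is the first disjunct of the right-hand side. Item (iv) is symmetric, with $1_{\B X}$ and value $1$, so the third disjunct yields $x\in A^0$.

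For (v) and (vi) I compare $\B A$ with $0_{\B A}=(\emptys_X,\dom(\B A))$. Its domain equals $\dom(\B A)$ up to $\emptys_X$, and its characteristic function is constantly $0$ on $A^1\cup A^0$. If $a\in A^1$, then $a$ witnesses disjunct (c), since $\chi_{\B A}(a)=1\neq 0$. Conversely, a witness of type (c) lies in $A^1$. A witness of types (a) or (b) is some $x$ in one domain that is apart from everything in the other domain. Since the domains agree, $x$ is apart from itself, so $x\in\emptys_X\subseteq\dom(\B A)$, and in the case $x\in A^0$ I would again argue via apartness to land in $A^1$. This last step likely needs $\EFQ$, so I expect it to be the genuine obstacle, and I would allow intuitionistic logic there. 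Item (vi) is the dual argument with $1_{\B A}$.
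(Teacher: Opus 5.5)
Your proposal is correct and takes the same route as the paper. Item (i) reduces to Proposition~\ref{prp: ext1} via Remark~\ref{rem: equivs}(i), (ii) is a direct unfolding, and the other items come from unfolding the three disjuncts of Remark~\ref{rem: equivs}(ii). Using $\EFQ$ in (v) and (vi) is fine, since the paper does not claim $\MIN$ here.

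One small correction in (iii)(a): $x\neq_X x$ on its own does not make $x$ apart from every element of $\dom(\B A)$, because $\neq_X$ is not assumed cotransitive. What does give it is $x\in(\emptys_X\cup X)^{\neq_X}$, which is the reason you already cite, so the step is sound once phrased that way.
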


\begin{proof}
(i) By Proposition~\ref{prp: ext1}, Definition~\ref{def: csineq} and the equivalence 
	$ \B A =_{\C E^{\Disj}(X)} \B B \TOT \B \chi_{\B A} =_{X^{\circledast}} \B \chi_{\B B}$.\\
	Case (ii) is immediate to show, and  cases (iii) - (v) follow easily by Remark~\ref{rem: equivs}.
\end{proof}

\begin{remark}\label{rem: basicequal} If $\B A, \B B \in \C E^{\Disj}(X)$, and if $\square \in \{\vee, \wedge\}$, then $\dom(\B A) \cap \dom(\B B) =_{\C E(X)} \dom(\B A \square \B B)$.
\end{remark}
	
\begin{proof}
By straightforward calculations.
\end{proof}

\section{The partial Gelfand transform}
\label{sec: gelfand}
	
As the powerset of $X \in \Set$ is classically bijective to the total dual set of $X$ i.e., the total Boolean-valued functions on $X$, the complemented powerset of $X \in \SetExtIneq$ is bijective to its strongly extensional partial dual $\B X^{\circledast}$
(for the proof of the following fact see~\cite{MWP24}, Proposition 2.14).

\begin{proposition}\label{prp: extcompl2}
If $\C X := (X, =_X, \neq_X) \in \SetIneq$, let the assignment routines
	$$\chi \colon \C E^{\Disj}(X) \sto \B X^{\circledast} \ \ \& \ \
	\delta \colon \B X^{\circledast} \sto \C E^{\Disj}(X),$$
	$$\B A \mapsto \chi(\B A) \ \ \ \B A \in \C E^{\Eisj}(X),$$
	$$\B f_A \mapsto \delta(\B f_A) := \big(\delta^1( \B f_A), \delta^0( \B f_A)\big), \ \ \ \B f_{A} \in \B X^{\circledast},$$
$$ \delta^1( \B f_A) := \big\{a \in A \mid  f_A(a) =_{\mathsmaller{\two}} 1 \big\}, \ \ \ \ \ 
	\delta^0( \B f_A) := \big\{a \in A \mid  f_A(a) =_{\mathsmaller{\two}} 0 \big\}.$$
Then $\chi, \delta$ are well-defined functions that are inverse to each other. 
\end{proposition}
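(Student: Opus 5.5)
We have to show four things: $\chi$ and $\delta$ land in the right totalities, both respect the equalities, and the two composites are the identity up to the relevant equality.

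\emph{Well-definedness of $\chi$.} For $\B A \in \C E^{\Disj}(X)$, Remark~\ref{rem: seops}(i) already gives $\chi(\B A) := (\dom(\B A), \chi_{\B A}) \in \B X^{\circledast}$. Before that, one checks that $\chi_{\B A}$ is a genuine function on $\dom(\B A)$. The defining cases $P^1(x)$ and $P^0(x)$ cannot overlap, since that would give $x \neq_X x$ and, by $(\Ineq_1)$, $\bot$. Both cases are extensional because $P^1, P^0$ are extensional properties. That $\chi$ is a function, i.e.\ respects equality, is exactly the direction ($\Leftarrow$) of Remark~\ref{rem: equivs}(i).

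\emph{Well-definedness of $\delta$.} Let $\B f_A = (A, f_A) \in \B X^{\circledast}$.
\begin{itemize}
\item The properties $a \in A \wedge f_A(a) =_{\two} 1$ and $a \in A \wedge f_A(a) =_{\two} 0$ are extensional, because $A$ is extensional and $f_A$ is a function. So $\delta^1(\B f_A), \delta^0(\B f_A) \in \C E(X)$.
\item Strong disjointness: if $a \in \delta^1(\B f_A)$ and $a{'} \in \delta^0(\B f_A)$, then $f_A(a) =_{\two} 1 \neq_{\two} 0 =_{\two} f_A(a{'})$. Strong extensionality of $f_A$ then gives $a \neq_X a{'}$. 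Hence $\delta(\B f_A) \in \C E^{\Disj}(X)$.
\item $\delta$ respects equality. Suppose $\B f_A =_{\B X^{\circledast}} \B g_B$, in the sense of Definition~\ref{def: equalpf}. If $a \in \delta^1(\B f_A)$, then $a \in B$ and $g_B(a) =_{\two} f_A(a) =_{\two} 1$, so $a \in \delta^1(\B g_B)$. The same argument works for $\delta^0$, and symmetrically in the other direction.
\end{itemize}

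\emph{Inverse identities.}
\begin{itemize}
\item $\delta(\chi(\B A)) =_{\C E^{\Disj}(X)} \B A$. We have $\delta^1(\chi(\B A)) = \{x \in A^1 \cup A^0 \mid \chi_{\B A}(x) =_{\two} 1\}$, and this equals $A^1$. Indeed, if $x \in A^1$ the value is $1$. If $x \in A^0$ the value is $0$, so the equation $0 =_{\two} 1$ yields $\bot$. To conclude $x \in A^1$ from $\bot$ we use $\EFQ$ here, or we avoid it by noting that the case $x \in A^0$ is simply discarded by the definition of the union. Similarly $\delta^0(\chi(\B A)) = A^0$.
\item $\chi(\delta(\B f_A)) =_{\B X^{\circledast}} \B f_A$.
  \begin{itemize}
  \item Domains: $\dom(\delta(\B f_A)) = \delta^1 \cup \delta^0$. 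This is included in $A$. Conversely, for $a \in A$ we have $f_A(a) \in \two$, so $f_A(a) =_{\two} 0 \vee f_A(a) =_{\two} 1$, which puts $a$ in $\delta^0 \cup \delta^1$.
  \item Values: on the common domain, $\chi_{\delta(\B f_A)}(a)$ is $1$ exactly when $f_A(a) =_{\two} 1$, and $0$ exactly when $f_A(a) =_{\two} 0$. So the two functions agree.
  \end{itemize}
\end{itemize}

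\emph{Main obstacle.} The delicate point is the case analysis in the first identity, where a "wrong" case yields $0 =_{\two} 1$. I would try to handle it purely through the definition of $\chi_{\B A}$ by cases on $P^1$ and $P^0$, without invoking $\EFQ$. If that fails, I would allow intuitionistic logic at this step. Everything else is a routine unfolding of definitions.
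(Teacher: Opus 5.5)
Your proposal is correct. The paper gives no proof of this proposition itself (it defers to~\cite{MWP24}, Proposition 2.14), and your direct unfolding of the definitions is the routine argument one expects: Remark~\ref{rem: seops}(i) and the direction $(\Leftarrow)$ of Remark~\ref{rem: equivs}(i) for $\chi$, strong extensionality of $f_A$ for the disjointness of $\delta(\B f_A)$, and the disjunction $f_A(a) =_{\two} 0 \vee f_A(a) =_{\two} 1$ built into $\two$ for the domain of $\chi(\delta(\B f_A))$.

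The only thing to drop is your suggested way of avoiding $\EFQ$ in the inclusion $\delta^1(\chi(\B A)) \subseteq A^1$. The branch $x \in A^0$ only yields $0 =_{\two} 1$, and ``discarding'' it is itself an application of $\EFQ$ (disjunctive syllogism). Without $\EFQ$ the inclusion is not derivable in $\MIN$ in general. This is harmless here, since the proposition is not claimed within $\MIN$. Note also that $\EFQ$ is used tacitly in Remark~\ref{rem: seops}(i), in the cases where both points lie in $A^1$ or both lie in $A^0$.
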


\noindent
Proposition~\ref{prp: extcompl2} indicates that there is a way to constructivise many classical results concerning
\textbf{points, subsets, and total functions} by employing 
\textbf{points, complemented subsets, and partial functions}.
As another example of this method, we describe the constructive and partial counterpart to the classical embedding of a set $X$ to its total, second dual. 
If $X \in \SetExtIneq$, the \textit{total Gelfand transform}\footnote{In this section the Gelfand transform, both its total and partial version, concerns the embedding of $X$ into the second Boolean dual of $X$. A similar treatment of the Gelfand transform that concerns the embedding of $X$ into the second real dual of $X$ is possible, where the real dual of $X$, total or partial, involves  total or partial functions of $X$ to the reals.} on $X$ is the assignment routine 
$${}^{\hat{}} \colon X \sto \D F(\D F(X, \two), \two), \ \ \ x \mapsto \hat{x}, \ \ \ x \in X,$$ 
$$ \hat{x} \colon \D F(X, \two) \to \two, \ \ \ 
\hat{x}(f) := f(x), \ \ \ f \in \D F(X, \two).$$
It is immediate to see (also constructively) that the total Gelfand transform on $X$ is a well-defined function, but the proof that $\D F(X, \two)$ \textit{separates the points} of $X$ i.e.,
$$\forall_{f \in \D F(X, \two)}\big(f(x) =_{\two} f(y)\big) \To x =_X y,$$
with which one proves that the total Gelfand transform is an embedding, rests on $\PEM$. It is with $\PEM$ that one proves the totality of the required function $\bar{x} \colon X \to \two$, where 
$$\bar{x}(x{'}) := \left\{ \begin{array}{ll}
	1   &\mbox{, $x{'} =_X x$}\\
	0             &\mbox{, $\neg(x{'} =_X x)$.}
\end{array}
\right. $$
To avoid the use of $\PEM$, one needs to use partial Boolean-valued functions. Next follows the constructive version of the classical embedding property of the total Gelfand transform. For that, one has to replace the total Gelfand transform by its partial version. The next proof is the constructive version of the classical proof of the fact that the total Gelfand transform is an embedding.

\begin{proposition}\label{prp: pGelfand}
If $\C X \in \SetExtIneq$, the partial Gelfand transform on $X$ is the assignment routine 
$${}^{\widehat{}} \colon X \sto \B X^{\circledast \circledast}, \ \ \ x \mapsto \widehat{\B x}, \ \ \ x \in X,$$ 
$$  \widehat{\B x} := \big(\B X^{\circledast}_x, \widehat{x}\big), \ \ \ \widehat{x} \colon \B X^{\circledast}_x \to \two, \ \ \  
\B X^{\circledast}_x := \{\B f \in \B X^{\circledast} \mid x \in \dom(\B f)\},$$
$$ \widehat{x}(\B f) := f(x), \ \ \ \B f \in \B X^{\circledast}_x.$$
The partial Gelfand transform on $X$ is a well-defined embedding.
\end{proposition}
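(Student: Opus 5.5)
We have to check four things: that $\B X^{\circledast}_x$ is an extensional subset of $\B X^{\circledast}$ and $\widehat{x}$ is a function on it; that $\widehat{x}$ is strongly extensional, so that $\widehat{\B x} \in \B X^{\circledast\circledast}$; that $x \mapsto \widehat{\B x}$ is a function, i.e.\ respects equality; and that it is an embedding.

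\emph{Extensionality and well-definedness.} If $\B f =_{\B X^{\circledast}} \B g$ and $x \in \dom(\B f)$, then the first clause of Definition~\ref{def: equalpf} gives $x \in \dom(\B g)$. So $\B X^{\circledast}_x$ is an extensional subset. The third clause gives $f(x) =_{\two} g(x)$, so $\widehat{x}$ is a function.

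\emph{Strong extensionality of $\widehat{x}$.} Let $\B f, \B g \in \B X^{\circledast}_x$ with $\widehat{x}(\B f) \neq_{\two} \widehat{x}(\B g)$. Then $x \in \dom(\B f)\cap\dom(\B g)$ and $f(x) \neq_{\two} g(x)$. This is exactly the third disjunct of Definition~\ref{def: equalpf}, witnessed by $x$, so $x \colon \B f \neq_{\B X^{\circledast}} \B g$. No logic beyond this is needed.

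\emph{The transform respects equality.} Let $x =_X y$. For every $\B f \in \B X^{\circledast}$, extensionality of $\dom(\B f)$ gives $x \in \dom(\B f) \TOT y \in \dom(\B f)$, hence $\B X^{\circledast}_x =_{\C E(\B X^{\circledast})} \B X^{\circledast}_y$. Since $f$ is a function on $\dom(\B f)$, we also get $f(x) =_{\two} f(y)$. Therefore $\widehat{\B x} =_{\B X^{\circledast\circledast}} \widehat{\B y}$ by Definition~\ref{def: equalpf}.

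\emph{Embedding (main step).} This replaces the classical $\PEM$-based separating function $\bar{x}$ by a complemented subset. Assume $\widehat{\B x} =_{\B X^{\circledast\circledast}} \widehat{\B y}$. Take the complemented subset
$$\B A_x := \big(\{x' \in X \mid x' =_X x\}, \{x' \in X \mid x' \neq_X x\}\big).$$
Both components are extensional: the first by transitivity of $=_X$, the second by $(\Ineq_2)$. They are disjoint: if $x' =_X x$ and $x'' \neq_X x$, then $(\Ineq_2)$ gives $x'' \neq_X x'$. The disjointness condition in Definition~\ref{def: extapartsubsets} asks for $x' \neq_X x''$, so I use disjointness in the symmetric form $x'' \neq_X x'$. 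This step needs care, because symmetry of $\neq_X$ is not assumed. If the order matters, I will instead swap the roles and use $\{x' \mid x \neq_X x'\}$ as the second component; $(\Ineq_2)$ then gives $x' \neq_X x''$ directly.

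By Remark~\ref{rem: seops}(i), $\B f := \B\chi_{\B A_x} \in \B X^{\circledast}$. Since $x \in A_x^1$, we have $\B f \in \B X^{\circledast}_x$. The equality $\widehat{\B x} = \widehat{\B y}$ then gives $\B f \in \B X^{\circledast}_y$ and $\chi_{\B A_x}(y) =_{\two} \chi_{\B A_x}(x) =_{\two} 1$. Hence $y \in A_x^1$, i.e.\ $y =_X x$, with no use of $\PEM$. The only delicate point is choosing the orientation of the inequality in $A_x^0$ so that disjointness holds without assuming $(\Ineq_4)$.
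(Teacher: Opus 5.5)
Your checks of well-definedness and of compatibility with $=_X$ match the paper's, with a little extra detail. For the embedding you use the paper's device: the partial characteristic function of $\{x\}$ against an inequality-complement. The paper's $\widebar{\B x}$ has domain $\{x\} \cup \{x\}^{\neq_X}$ and checks strong extensionality by hand rather than via Remark~\ref{rem: seops}(i).

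The gap is exactly at the point you flag, and choosing the orientation of $A^0_x$ does not close it. Since $\neq_{\two}$ is symmetric, strong extensionality of $\chi_{\B A}$ requires two things for $z \in A^1$ and $z' \in A^0$. It requires $z \neq_X z'$, from $\chi_{\B A}(z) \neq_{\two} \chi_{\B A}(z')$. It also requires $z' \neq_X z$, from $\chi_{\B A}(z') \neq_{\two} \chi_{\B A}(z)$. The condition $A^1 \Disj A^0$ supplies only the first. So Remark~\ref{rem: seops}(i) tacitly needs $(\Ineq_4)$; its proof treats only the case $\chi_{\B A}(x) =_{\two} 1$, $\chi_{\B A}(y) =_{\two} 0$. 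Citing it therefore does not discharge the problem.

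Concretely, on $\two$ put $i \neq' j :\TOT i =_{\two} 1 \ \& \ j =_{\two} 0$; this is an extensional inequality. For $x := 1$ your $\B A_x$ is $(\{1\}, \{0\})$. Strong extensionality of its characteristic function would turn the true $0 \neq_{\two} 1$ into $0 \neq' 1$, i.e.\ into $\bot$. The paper's own $\widebar{\B x}$ fails in the mirror-image way at $x := 0$, since its check produces only $z' \neq_X z$.

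The statement survives, and either of two repairs works.

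\textbf{Symmetric complement.} Take $A^0_x := \{z \in X \mid z \neq_X x \ \& \ x \neq_X z\}$. It is extensional by $(\Ineq_2)$, and it gives both $z \neq_X z'$ and $z' \neq_X z$ for $z \in A^1_x$, $z' \in A^0_x$. Then you can verify strong extensionality of $\chi_{\B A_x}$ directly, and the rest of your argument runs unchanged.

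\textbf{Constant function.} More simply, use the constant partial function $\B c_x := (\{x\}, 1)$. It is strongly extensional in $\INT$: $1 \neq_{\two} 1$ yields $\bot$, and $\EFQ$ then gives $z \neq_X z'$. Your final step already uses $\EFQ$ in the same way, to dismiss the case $y \in A^0_x$, where $\chi_{\B A_x}(y) =_{\two} 0$. Since $\B c_x \in \B X^{\circledast}_x$, the first clause of $\widehat{\B x} =_{\B X^{\circledast\circledast}} \widehat{\B y}$ alone gives $y \in \dom(\B c_x)$, i.e.\ $y =_X x$, without looking at values.

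If $\neq_X$ is symmetric (e.g.\ an apartness relation), your proof is correct as written and coincides with the paper's.
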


\begin{proof}
To show that the partial Gelfand transform is well-defined, it suffices to show that $\widehat{\B x}$ is a 
strongly extensional partial function.
Let $\B f, \B g \in \B X^{\circledast}_x$ i.e., $x \in \dom(\B f) \cap \dom(\B g)$, such that $\widehat{\B x}(f) \neq_{\two} \widehat{\B x}(\B g) :\TOT f(x)  \neq_{\two} g(x)$.
Then by Definition~\ref{def: equalpf} we conclude that $x \colon \B f \neq_{\B X^{\circledast}} \B g$, and hence $x \colon \B f \neq_{\B X^{\circledast}_x} \B g$.
Next we show that the partial Gelfand transform is a function i.e., if $x =_X y$, then 
\begin{align*}
	\widehat{\B x} =_{\B X^{\circledast \circledast}} \widehat{\B y} & :\TOT \forall_{\B f \in \B X^{\circledast}}\bigg[\big(\B f \in \dom(\widehat{\B x}) \To \B f \in \dom(\widehat{\B y})\big) \wedge \\
	& \ \ \ \ \ \ \ \ \ \ \ \ \ \ \ \ \big(\B f \in \dom(\widehat{\B y}) \To \B f \in \dom(\widehat{\B x})\big) \wedge \\
	& \ \ \ \ \ \ \ \ \ \ \ \ \ \ \ \ \big(\B f \in \dom(\widehat{\B x}) \cap \dom(\widehat{\B y}) \To \widehat{x}(\B f) =_{\two} \widehat{y}(\B f)\big)\bigg].
\end{align*}
Let $\B f \in \B X^{\circledast}$. If $\B f \in \dom(\widehat{\B x}) :\TOT x \in \dom(\B f)$, then by the extensionality of the subset $\dom(\B f)$ of $X$ we get $y \in \dom(\B f) \TOT: \B f \in \dom(\widehat{\B y})$. The second implication is shown similarly. If $\B f \in \dom(\widehat{\B x}) \cap \dom(\widehat{\B y}) \TOT x \in \dom(\B f) \wedge y \in \dom(\B f)$, then $x =_X y \To f(x) =_{\two} f(y)$, which is equivalent to $\widehat{x}(\B f) =_{\two} \widehat{y}(\B f)$. Finally, we show that the partial Gelfand transform is an embedding i.e., if $\widehat{\B x} =_{\B X^{\circledast \circledast}} \widehat{\B y}$, then $x =_X y$. Let the partial function $\widebar{\B x} : = \big(\{x\} \cup \{x\}^{\neq_X}, \widebar{x}\big)$, defined by 
$$\widebar{x}(z) := \left\{ \begin{array}{ll}
	1   &\mbox{, $z\in \{x\}$}\\
	0             &\mbox{, $z \in\{x\}^{\neq_X}$.}
\end{array}
\right. $$
Clearly, $x \in \dom(\widebar{\B x})$. To show that $\widebar{x}$ is strongly extensional, let $z, z{'} \in \dom(\widebar{\B x})$ such that $\widebar{\B x}(z) =_{\two} 1$ and $\widebar{x}(z{'}) =_{\two} 0$ i.e., $z =_X x$ and $z{'} \neq_X x$. By the extensionality of $\neq_X$ we get $z{'} \neq_X z$. Hence,
$\widebar{\B x} \in \dom(\widehat{\B x})$. By definition of the equality $\widehat{\B x} =_{\B X^{\circledast \circledast}} \widehat{\B y}$ we have that $\widebar{\B x} \in \dom(\widehat{\B y})$, and hence $\widebar{\B x} \in \dom(\widehat{\B x}) \cap \dom(\widehat{\B y})$. Consequently, 
$\widehat{x}(\widebar{\B x}) =_{\two} \widehat{y}(\widebar{\B x})$, thus $1 =: \widebar{x}(x) =_{\two} \widebar{x}(y)$, from which we get $y =_X x$.
\end{proof}

Although the total Gelfand transform is strongly extensional, if we restrict on strongly extensional functions, we cannot show this for the partial Gelfand transform.  According to Definition~\ref{def: equalpf}, 
\begin{align*}
	\widehat{\B x} \neq_{\B X^{\circledast \circledast}} \widehat{\B y} & :\TOT \exists_{\B f \in \B X^{\circledast}}\bigg[\big(\B f \in \dom(\widehat{\B x}) \wedge f \in \dom(\widehat{\B y})^{\neq_{\B X^{\circledast}}}\big) \vee \\
	& \ \ \ \ \ \ \ \ \ \ \ \ \ \ \ \ \big(\B f \in \dom(\widehat{\B y}) \wedge \B f \in \dom(\widehat{\B x})^{\neq_{\B X^{\circledast}}}\big) \vee \\
	& \ \ \ \ \ \ \ \ \ \ \ \ \ \ \ \  \big(\B f \in \dom(\widehat{\B x}) \cap \dom(\widehat{\B y}) \wedge \widehat{x}(\B f) \neq_{\two} \widehat{y}(\B f)\big)\bigg].
\end{align*}
If $\B f_0 \colon \widehat{\B x} \neq_{\B X^{\circledast \circledast}} \widehat{\B y}$, and as $f_0$ is strongly extensional, it is only within the last disjunct that we get $x \neq_X y$. We need a stronger formulation of $\B f \notin \dom(\widehat{\B x})$ or $\B f \notin \dom(\widehat{\B y})$, in order to get the strong extensionality of the partial Gelfand transform.

\section{Swap algebras}
\label{sec: swap}

Proposition~\ref{prp: isswapa1}(iii) shows that the algebra of complemented subsets has, in general, many unequal zeros and ones, and hence it cannot be, in general, a Boolean algebra. Actually, the totality of complemented subsets of some inhabited $\C X := (X, =_X, \neq_X)$ in $\SetExtIneq$ is a \textit{swap algebra} of type $(\ti)$, if the first kind of definition of join and meet is considered, and it is a swap algebra of  type $(\tii)$, if the second kind of definition of join and meet is considered (see~\cite{MWP24} for all necessary details). The abstract version of the common properties of the two algebras of complemented subsets $\C B_1^e(X)$ and $\C B_2^e(X)$ forms the notion of a swap algebra, where the axiom $(\swapa_{\ti})$ in the following Definition~\ref{def: swapalgebra} is satisfied by $\C B_1^e(X)$ and the axiom $(\swapa_{\tii})$ is satisfied by $\C B_2^e(X)$. A swap algebra of type $(\ti)$ does not satisfy, in general, condition $(\swapa_{\tii})$, and a swap algebra of type $(\tii)$ does not satisfy, in general, condition $(\swapa_{\tii})$.

\begin{definition}\label{def: swapalgebra}
	A \textit{swap algebra} is a structure $\C A := \big(A, \vee, \wedge, 0, 1, -, 0_{-}, 1_{-}\big)$,
	where $(A, =_A, \neq_A) \in \SetExtIneq$, $0, 1 \in A$, $\vee, \wedge, \colon A \times A \to A$ and $0_{-}, 1_{-} \colon A \to A$ 
	are functions, such that the following conditions hold:\\[1mm]
	$(\swapa_1)$ \ $a \vee a =_{\mathsmaller{A}} a$.\\[1mm]
	$(\swapa_2)$ \ $a \vee b =_{\mathsmaller{A}} b \vee a$.\\[1mm]
	$(\swapa_3)$ \ $a \vee (b  \vee c) =_{\mathsmaller{A}} (a \vee b) \vee c$.\\[1mm]
	$(\swapa_4)$ \ $a \vee (b  \wedge c) =_{\mathsmaller{A}} (a \vee b) \wedge (a \vee c)$.\\[1mm]
	$(\swapa_5)$ $0 \neq_{\mathsmaller{A}} 1$.\\[1mm]
	$(\swapa_6)$ \ $-0 =_{\mathsmaller{A}} 1$.\\[1mm]
	$(\swapa_7)$ \ $-(-a) =_{\mathsmaller{A}} a$.\\[1mm]
	$(\swapa_8)$ \ $-(a \vee b) =_{\mathsmaller{A}} (-a) \wedge (-b)$.\\[1mm]
	$(\swapa_9)$ \ $a \vee (-a) =_{\mathsmaller{A}} 1_a$ and $a \wedge (-a) =_{\mathsmaller{A}} 0_a$.\\[1mm]
	$(\swapa_{10})$  \ $a  =_{\mathsmaller{A}} 0_{a} \vee a$.\\[1mm]
	A swap algebra $\C A$ is of \textit{type} $(\ti)$, if the following condition holds:\\[1mm]
	$(\swapa_{\ti})$ \ $(a \vee b) \wedge a =_{\mathsmaller{A}} a$.\\[1mm]
	A swap algebra  $\C A$ is of \textit{type} $(\tii)$, if the following condition holds:\\[1mm]
	$(\swapa_{\tii})$ \ $0_a \vee b =_{\mathsmaller{A}} 1_a \wedge b$.\\[1mm]
	We call an element $a $ of $A$:\\[1mm]
	\normalfont (i)
	\itshape total, if $1_a =_{\mathsmaller{A}} 1$.\\[1mm]
		\normalfont (ii)
	\itshape a unit, if $a =_{\mathsmaller{A}} 1_a$.\\[1mm]
	\normalfont (iii)
	\itshape inhabited, if $a \neq_{\mathsmaller{A}} 0_a$.\\[1mm]
		\normalfont (iv)
	\itshape coinhabited, or a counit, if $a \neq_{\mathsmaller{A}} 1_a$.
\end{definition}

The hypothesis of inhabitedness of $X$ is crucial to the proof that its complemented subsets satisfy condition $(\swapa_5)$. To show the inequality $0_{\B X} \neq_{\mathsmaller{\C E^{\Disj}(X)}} 1_{\B X}$, we use Definition~\ref{def: csineq}, where if $x_0 \in X$, then $x_0 \colon (X, \emptys_X) \neq_{\C E^{\Disj}(X)} (\emptys_X, X)$. For alternative and more dense characterisations of swap algebras see~\cite{MWP24}, sections 7 and 9, while for various examples of swap algebras of both types see~\cite{MWP24}, section 7. Swap algebras can look quite differently from sets of pairs of appropriately disjoint objects. In~\cite{Ha74}, p.~7, a Boolean structure is associated to the set $D_m$ of all positive integral divisors of $m$, where $m $ is a square-free natural number i.e., $m > 1$ and $m$ has a prime decomposition of the form $m =_{\Nat} p_1^{l_1} \cdot \ldots \cdot p_k^{l_k}$ and $l_1 =_{\Nat} \ldots =_{\Nat} l_k =_{\Nat} 1$. It turns out that in the general case, where $m$ is an arbitrary natural number $> 1$, not necessarily square-free, $D_m$ has the structure of a swap algebra of type $(\ti)$. For the proof of the following fact we refer to~\cite{MWP24}, Proposition 7.6.

\begin{proposition}\label{prp: nat}
	If $m > 1$, and $D_m := \{n \in \Nat \mid n | m\}$, let $\B 0 := 1, \B 1 := m$, and let $n \vee n{'} := \lcm \{n, n{'}\}$ and 
	$n \wedge n{'} := \gcdi \{n, n{'}\}$, the least common multiple and the greatest common divisor of $n, n{'}$, respectively. Let also $-n := \frac{m}{n}$, $0_n := n \wedge (-n)$, and
	$1_n := n \vee (-n)$, for every $n, n{'} \in D_m$. Then $\C D_m := (D_m, \vee, \wedge, \B 0, \B 1, -, 0_{-}, 1_{-})$ is a swap algebra of type $(\ti)$.
\end{proposition}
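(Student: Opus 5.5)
Since $D_m$ is a finite, decidable subset of $\Nat$, every equation between elements of $D_m$ can be decided by computation, so the whole verification is elementary arithmetic. I would encode each divisor by its exponent vector. Write $m =_{\Nat} p_1^{l_1}\cdots p_k^{l_k}$ (the factorisation exists constructively by bounded search). Then $n \mapsto (e_1,\ldots,e_k)$ with $0 \le e_i \le l_i$ is a bijection between $D_m$ and the finite product of chains $\prod_i \{0,\ldots,l_i\}$. Under this bijection $\lcm$ becomes coordinatewise $\max$ and $\gcdi$ becomes coordinatewise $\min$. The complement $-n = m/n$ becomes $e_i \mapsto l_i - e_i$, with $\B 0 = 1 \mapsto (0,\ldots,0)$ and $\B 1 = m \mapsto (l_1,\ldots,l_k)$. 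Equality and inequality on $D_m$ are the ones inherited from $\Nat$, so they are discrete and extensional. All operations are functions, and no strong extensionality issues arise, since $D_m$ is discrete.

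With this encoding, every axiom is an identity that holds coordinatewise in a finite chain $\{0,\ldots,l\}$ with $\max$, $\min$ and the order-reversing involution $e \mapsto l-e$. A finite chain is a distributive lattice, so $(\swapa_1)$–$(\swapa_4)$ and the absorption law $(\swapa_{\ti})$ hold. Axioms $(\swapa_6)$–$(\swapa_8)$ hold because $e \mapsto l-e$ is an involutive anti-automorphism, which gives the De Morgan law $l-\max(e,f) = \min(l-e, l-f)$. Axiom $(\swapa_5)$ holds because $m>1$, so $1 \neq_{\Nat} m$.

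Axiom $(\swapa_9)$ is true by definition, since the paper defines $0_n := n \wedge (-n)$ and $1_n := n \vee (-n)$.

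The only axiom needing thought is $(\swapa_{10})$, namely $n =_{\Nat} \lcm(\gcdi(n, m/n), n)$. Coordinatewise this reads $\max(\min(e, l-e), e) = e$, which is just absorption, since $\min(e,l-e) \le e$. So I expect no real obstacle. The main care point is to make the exponent encoding rigorous constructively: prime factorisation of $m$, and the equivalence $n \mid m \iff e_i \le l_i$ for all $i$. Both are decidable finite facts.

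Alternatively, I could avoid coordinates altogether. The divisibility order on $D_m$ is a distributive lattice (a standard number-theoretic fact), $n \mapsto m/n$ is an order-reversing involution on it, and absorption then gives $(\swapa_{10})$ directly.
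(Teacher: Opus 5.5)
Your verification is correct and complete, and every axiom of Definition~\ref{def: swapalgebra} is accounted for. The paper itself gives no argument for this proposition. It only refers to Proposition 7.6 of the authors' earlier paper MWP24, so there is no in-text proof to compare routes against.

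Your exponent-vector encoding is a clean way to make the check uniform in $m$. It exhibits $\C D_m$ as isomorphic to the product $\prod_i\{0,\ldots,l_i\}$ of finite chains, with coordinatewise $\max$, $\min$ and the order-reversing involution $e\mapsto l_i-e$, i.e.\ a finite De Morgan algebra. From this:
\begin{itemize}
\item $(\swapa_1)$--$(\swapa_4)$, $(\swapa_{\ti})$ and $(\swapa_{10})$ are lattice identities: distributivity and absorption, the latter because $0_n = n\wedge(-n)$ lies below $n$.
\item $(\swapa_6)$--$(\swapa_8)$ come from the involution.
\item $(\swapa_9)$ holds by definition.
\item $(\swapa_5)$ is just $1\neq_{\Nat} m$.
\end{itemize}
A direct number-theoretic proof would instead have to establish identities such as $\lcm\{n,\gcdi\{n',n''\}\}=\gcdi\{\lcm\{n,n'\},\lcm\{n,n''\}\}$ and $m/\lcm\{n,n'\}=\gcdi\{m/n,m/n'\}$ by hand. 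The encoding also shows at a glance why $\C D_m$ is Boolean only in the square-free case. If some $l_i\geq 2$ and $0<e_i<l_i$, then the $i$-th coordinate of $1_n$ is $\max\{e_i,l_i-e_i\}<l_i$, so $1_n\neq m$.

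Two small points to tighten.
\begin{itemize}
\item Your opening remark that every equation is decidable only settles the identities for each fixed $m$, by enumeration. It is the coordinatewise argument that proves the proposition for all $m>1$.
\item Say explicitly that the encoding shows $D_m$ is closed under $\lcm$, $\gcdi$ and $n\mapsto m/n$. The bounds $0\leq e_i\leq l_i$ are preserved by $\max$, $\min$ and $e\mapsto l_i-e$. This closure is needed for these to be operations on $D_m$ at all.
\end{itemize}
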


Notice that the swap algebra-structure of $\C D_m$ is interesting both from a constructive and a classical point of view! The next three propositions give a flavour of the common and non-common behaviour between swap algebras of type $(\ti)$ and $(\tii)$, and are straightforward to show.

\begin{proposition}\label{prp: swapalg1}
	If $\C A$ is a swap algebra, the following hold:\\[1mm]	
	$(1)$ \ $a \wedge a =_{\mathsmaller{A}} a$.\\[1mm]
	$(2)$ \ $a \wedge b =_{\mathsmaller{A}} b \wedge a$.\\[1mm]
	$(3)$ \ $a \wedge (b \wedge c) =_{\mathsmaller{A}} (a \wedge b) \wedge c$.\\[1mm]
	$(4)$ \ $a \wedge (b  \vee c) =_{\mathsmaller{A}} (a \wedge b) \vee (a \wedge c)$.\\[1mm]
	$(5)$ $-0_a =_{\mathsmaller{A}} 1_a$.\\[1mm]
	$(6)$ \ $0_{-a} =_{\mathsmaller{A}} 0_{a}$.\\[1mm]
	$(7)$ \ $0_0 =_{\mathsmaller{A}} 0$ and $1_1 =_{\mathsmaller{A}} 1$.\\[1mm]
	$(8)$  \ $0_{0_a} =_{\mathsmaller{A}} 0_a =_{\mathsmaller{A}} 0_{1_a}$. \\[1mm]
	$(9)$  \ $0 \vee a =_{\mathsmaller{A}} a$. \\[1mm]
	$(10)$  \ $0_{a}  \wedge a =_{\mathsmaller{A}} 0_a$. \\[1mm]
	$(11)$ \ $1 \wedge a  =_{\mathsmaller{A}} a  =_{\mathsmaller{A}} 1_a \wedge a$.\\[1mm]
	$(12)$ \ $1_a \vee a =_{\mathsmaller{A}} 1_a$.
\end{proposition}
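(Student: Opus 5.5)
We derive all twelve items purely equationally from the axioms $(\swapa_1)$--$(\swapa_{10})$ of Definition~\ref{def: swapalgebra}, using no type axiom. The basic tool is duality through $-$. By $(\swapa_7)$ the map $-$ is an involution, hence injective up to $=_A$. Combining $(\swapa_7)$ with $(\swapa_8)$ yields the dual De Morgan law
$$-(a \wedge b) =_A -\big((-(-a)) \wedge (-(-b))\big) =_A -\big(-((-a) \vee (-b))\big) =_A (-a) \vee (-b),$$
and equivalently $a \wedge b =_A -\big((-a) \vee (-b)\big)$. Items (1)--(4) then follow from $(\swapa_1)$--$(\swapa_4)$ applied to $-a, -b, -c$, followed by an application of $-$. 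For example, for (4) we compute
$$a \wedge (b \vee c) =_A -\big((-a) \vee ((-b) \wedge (-c))\big) =_A -\big(((-a)\vee(-b)) \wedge ((-a)\vee(-c))\big),$$
which by dual De Morgan equals $(a\wedge b)\vee(a\wedge c)$. We also record $-1 =_A 0$, which follows from $(\swapa_6)$ and $(\swapa_7)$.

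Next we treat the items involving $0_a$ and $1_a$. For (5), $(\swapa_9)$ together with (1)--(2) gives
$$-0_a =_A -(a \wedge (-a)) =_A (-a) \vee (-(-a)) =_A (-a) \vee a =_A a \vee (-a) =_A 1_a.$$
For (6), $(\swapa_9)$ applied to $-a$ gives $0_{-a} =_A (-a)\wedge(-(-a)) =_A (-a)\wedge a =_A 0_a$. Item (10) comes from
$$0_a \wedge a =_A (a\wedge(-a))\wedge a =_A (a\wedge a)\wedge(-a) =_A 0_a,$$
using (1)--(3). Item (12) is the dual calculation: $1_a \vee a =_A (a\vee(-a))\vee a =_A 1_a$ by $(\swapa_1)$--$(\swapa_3)$. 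For the second equality in (11), we apply $-$ to $(\swapa_{10})$ for $-a$, that is to $-a =_A 0_{-a}\vee(-a)$, obtaining $a =_A (-0_{-a})\wedge a$. By (6) and (5) we have $-0_{-a} =_A -0_a =_A 1_a$, so $a =_A 1_a \wedge a$.

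The items involving the constants are the main obstacle, because $(\swapa_{10})$ and $(\swapa_9)$ concern $0_a$, while (9) concerns $0$, and these have to be linked. We first establish (7). We have $0_1 =_A 1\wedge(-1) =_A 1 \wedge 0$ and $1_0 =_A 0 \vee (-0) =_A 0 \vee 1$. By $(\swapa_{10})$ for $a := 0$ we get $0 =_A 0_0 \vee 0$, and by (6) together with $-0 =_A 1$ we get $0_0 =_A 0_1$.

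I would then try to show $0 \vee 1 =_A 1$. Taking $(\swapa_{10})$ at $a := 1$ gives $1 =_A 0_1 \vee 1 =_A (1\wedge 0)\vee 1$, and by (4)/(dual) absorption-like manipulations this becomes $(1\vee 1)\wedge(0\vee 1) =_A 1 \wedge (0 \vee 1)$, using distributivity $(\swapa_4)$. Applying $-$ to $0 =_A 0_0\vee 0$ gives $1 =_A 1_0 \wedge 1 =_A (0\vee 1)\wedge 1$. Combining these, $1 =_A 1 \wedge (0\vee 1)$, and symmetrically $0 =_A 0 \vee (1\wedge 0)$. From here $1_1 =_A 1 \vee 0$ and $0_0 =_A 0\wedge 1$, so (7) reduces to the two identities $0 \vee 1 =_A 1$ and $0 \wedge 1 =_A 0$. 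I expect to obtain these by feeding $(\swapa_{10})$ at $a := 1_0$ and $a := 0_0$ into (8).

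For (8), $0_{0_a} =_A 0_a\wedge(-0_a) =_A 0_a \wedge 1_a$ by (5). Expanding gives $(a\wedge(-a))\wedge(a\vee(-a))$, which by (4), (1)--(3) and (10) becomes $0_a \vee 0_a =_A 0_a$. Similarly $0_{1_a} =_A 1_a\wedge 0_a =_A 0_a$. With (8) in hand, (7) follows by taking $a := 0$ and using the reduction above. Finally, (9) follows from $(\swapa_{10})$ once I show $0 \vee a =_A 0_a \vee a$. I would do this by writing $0 =_A 0_0$ and distributing, and I anticipate this being the most delicate calculation, possibly requiring the auxiliary lemma $0 \vee 0_a =_A 0_a$. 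The first equality in (11), $1 \wedge a =_A a$, is then the dual of (9) obtained via $-$.
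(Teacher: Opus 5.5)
Your derivations of (1)--(6), (8), (10), (12) and of $a =_A 1_a \wedge a$ in (11) are correct. They are the routine De Morgan computations the paper presumably means; it gives no proof beyond calling the proposition straightforward. The gap is in (7), (9) and the equation $1 \wedge a =_A a$ in (11). You never actually derive these, and the route you sketch cannot work. Taking $a := 0$ in (8) only gives $0_{0_0} =_A 0_0 =_A 0_{1_0}$, which says nothing about $0_0$ versus $0$. The ``auxiliary lemma'' $0 \vee 0_a =_A 0_a$ is not available either.

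In fact no derivation from $(\swapa_1)$--$(\swapa_{10})$ as listed in Definition~\ref{def: swapalgebra} exists. The constants $0,1$ enter those axioms only through $(\swapa_5)$ and $(\swapa_6)$, while $0_a$ and $1_a$ are fixed by $(\swapa_9)$ independently of them. For a concrete counter-model, take the two-element Boolean algebra $\{\bot, \top\}$ with $\vee = \max$, $\wedge = \min$, $-\bot = \top$, $-\top = \bot$. Interpret the constants upside down: $0 := \top$ and $1 := \bot$, so $0_a = a \wedge (-a) = \bot$ and $1_a = \top$ for every $a$. All of $(\swapa_1)$--$(\swapa_{10})$ hold; for instance $-0 = \bot = 1$, $0 \neq 1$, and $0_a \vee a = \bot \vee a = a$. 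Both type axioms $(\swapa_{\ti})$ and $(\swapa_{\tii})$ hold as well. Yet $0_0 = \bot \neq \top = 0$, $1_1 = \top \neq \bot = 1$, $0 \vee \bot = \top \neq \bot$ and $1 \wedge \top = \bot \neq \top$. The same model satisfies (8) while refuting your lemma $0 \vee 0_a =_A 0_a$, so (8) cannot yield (7).

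These items therefore need an extra axiom linking the constant $0$ to $\vee$, for example $0 \vee a =_A a$. This holds in $\C E^{\Disj}(X)$ by Proposition~\ref{prp: isswapa1}(iv) and presumably belongs to the intended axiomatisation. With it, (9) is immediate, and $1 \wedge a =_A -\big(0 \vee (-a)\big) =_A a$. From $0 \vee 1 =_A 1$ you also get $1 \wedge 0 =_A -(0 \vee 1) =_A -1 =_A 0$. Hence $0_0 =_A 0 \wedge 1 =_A 0$ and $1_1 =_A 1 \vee 0 =_A 1$. Your reduction of (7) to the identities $0 \vee 1 =_A 1$ and $0 \wedge 1 =_A 0$ was the right one. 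But those two identities are exactly what the listed axioms do not provide, so no calculation with $(\swapa_{10})$ at $1_0$ or $0_0$ can supply them.
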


\begin{proposition}\label{prp: swapalgI1}
	If $\C A$ is a swap algebra of type $(\ti)$, the following hold:\\[1mm]	
	$(1)$ \ $0 \wedge a =_{\mathsmaller{A}} 0$.\\[1mm]
	$(2)$ \ $1 \vee a  =_{\mathsmaller{A}} 1$.\\[1mm]
	$(3)$ \ $ a =_{\mathsmaller{A}} (a \wedge b) \vee a$.\\[1mm]
	$(4)$ The restriction of all operations of $\C A$ to its subset $\{0, 1\}$ is isomorphic to the Boolean algebra of $\two$.
\end{proposition}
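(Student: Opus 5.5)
The plan is to derive (1)--(3) purely equationally from $(\swapa_{\ti})$, using the lattice-type identities of Proposition~\ref{prp: swapalg1} and the involution/de Morgan axioms $(\swapa_6)$--$(\swapa_8)$. Item (4) will then follow by tabulating the operations on $\{0,1\}$.

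For (1), I instantiate $(\swapa_{\ti})$ with $a := 0$ and $b := a$, which gives $(0 \vee a) \wedge 0 =_A 0$. By Proposition~\ref{prp: swapalg1}(9) we have $0 \vee a =_A a$, so $a \wedge 0 =_A 0$, and commutativity (Proposition~\ref{prp: swapalg1}(2)) yields $0 \wedge a =_A 0$.

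Items (2) and (3) are obtained by duality. From $(\swapa_6)$ and $(\swapa_7)$ I first record $-1 =_A -(-0) =_A 0$. I also record the dual de Morgan law $-(a \wedge b) =_A (-a) \vee (-b)$: apply $(\swapa_8)$ to $-a, -b$, use $(\swapa_7)$ to get $-((-a)\vee(-b)) =_A a \wedge b$, and negate once more.

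For (2), compute $-(1 \vee a) =_A (-1) \wedge (-a) =_A 0 \wedge (-a) =_A 0$ by $(\swapa_8)$ and (1). Negating and using $(\swapa_7)$ gives $1 \vee a =_A -0 =_A 1$.

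For (3), compute $-((a \wedge b) \vee a) =_A -(a \wedge b) \wedge (-a) =_A ((-a) \vee (-b)) \wedge (-a)$, which equals $-a$ by $(\swapa_{\ti})$. Negating gives $(a \wedge b) \vee a =_A a$. Every step is an equation between elements, so only the function property of $\vee, \wedge, -$ (substitutivity) is used, and no logic beyond minimal logic enters.

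For (4), I regard $\{0,1\} := \{a \in A \mid a =_A 0 \vee a =_A 1\}$, an extensional subset. I first check closure and compute the tables.
\begin{itemize}
\item Join: $0 \vee 0 =_A 0$ and $1 \vee 1 =_A 1$ by $(\swapa_1)$, and $0 \vee 1 =_A 1 \vee 0 =_A 1$ by (2).
\item Meet: $0 \wedge 0 =_A 0$, $1 \wedge 1 =_A 1$ (Proposition~\ref{prp: swapalg1}(1)), and $0 \wedge 1 =_A 1 \wedge 0 =_A 0$ by (1).
\item Complement: $-0 =_A 1$ and $-1 =_A 0$.
\item Local zeros and ones: $0_0 =_A 0$ and $1_1 =_A 1$ by Proposition~\ref{prp: swapalg1}(7). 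Further, $0_1 =_A 0_{-0} =_A 0_0 =_A 0$ by Proposition~\ref{prp: swapalg1}(6). Finally, $1_0 =_A -0_0 =_A -0 =_A 1$ and $1_1 =_A -0_1 =_A 1$ by Proposition~\ref{prp: swapalg1}(5).
\end{itemize}
So on $\{0,1\}$ we have $0_x = 0$ and $1_x = 1$, matching the Boolean algebra $\two$, where these are constant.

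The map $\two \to \{0,1\}$, $i \mapsto i$, is a function. It is an embedding and surjective by definition of the subset. It preserves all operations by the tables above, and it is compatible with the inequalities by $(\swapa_5)$ and the extensionality of $\neq_A$. Its inverse is given by case distinction on $a =_A 0 \vee a =_A 1$, which is well defined because $0 =_A 1$ contradicts $(\swapa_5)$ via $(\Ineq_1)$.

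The only mildly delicate point is this last well-definedness and strong extensionality check. Everything else is routine rewriting, and I expect no real obstacle.
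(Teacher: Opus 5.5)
Your derivations of (1)--(3) are correct, and they are the routine equational argument the paper leaves as ``straightforward''. Instantiating $(\swapa_{\ti})$ at $0$ and using Proposition~\ref{prp: swapalg1}(9) gives (1). Items (2) and (3) are the duals of (1) and $(\swapa_{\ti})$, which you obtain correctly via $-1 =_A 0$, involution and de Morgan. One caveat: everything that makes the global $0$ a bottom element is imported from Proposition~\ref{prp: swapalg1}(7),(9). This is used in (1), hence in (2), and in your tables for $0_{-}, 1_{-}$ in (4). These facts do not follow from $(\swapa_1)$--$(\swapa_{10})$ as printed. Take $\two$ with $a \vee b := \min\{a,b\}$, $a \wedge b := \max\{a,b\}$, $-a := 1-a$, $0_a := 1$, $1_a := 0$: it satisfies all these axioms and $(\swapa_{\ti})$, yet $0 \wedge 1 = 1$. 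So your argument is valid relative to that proposition taken as given, which is also the paper's standpoint, but not from the displayed axioms alone.

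The one assertion that is actually wrong is in (4): that $i \mapsto i$ is ``compatible with the inequalities'', with only a mildly delicate strong-extensionality check left over. $(\swapa_5)$ yields only $0 \neq_A 1$. Objects of $\SetExtIneq$ need not satisfy $(\Ineq_4)$, and extensionality $(\Ineq_2)$ does not produce $1 \neq_A 0$. So you get neither $1 \neq_{\two} 0 \Rightarrow 1 \neq_A 0$ nor strong extensionality of your inverse at the pair $(1,0)$. For a counterexample, take the standard Boolean algebra on $\two$ with $a \neq_A b :\TOT (a =_{\two} 0 \ \& \ b =_{\two} 1)$. This is a swap algebra of type $(\ti)$, and the inverse map $\{0,1\} \to \two$ is not strongly extensional. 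Drop that sentence, since (4) is a statement about the operations. For it, three things suffice: your tables, the embedding and surjectivity of $i \mapsto i$, and the well-definedness of the inverse (there $0 =_A 1$ together with $(\swapa_5)$ and $(\Ineq_1)$ yields literally $0 =_{\Nat} 1$, the required equality in $\two$). Of your inequality claims, only the strong extensionality of $i \mapsto i$ survives.
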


\begin{proposition}\label{prp: swapalgII1}
	If $\C A$ is a swap algebra of type $(\tii)$, the following hold:\\[1mm]	
	$(1)$ \ $a \wedge (-a \vee b) =_{\mathsmaller{A}} a \wedge b$.\\[1mm]
	$(2)$ \ $a \vee (-a \wedge b) =_{\mathsmaller{A}} a \vee b$.\\[1mm]
	$(3)$ \ $0 \wedge a =_{\mathsmaller{A}} 0_a$.\\[1mm]
	$(4)$ \ $1 \vee a  =_{\mathsmaller{A}} 1_a $.\\[1mm]
	$(5)$ \ $0_{a \vee b} =_{\mathsmaller{A}} 0_{a \wedge b} =_{\mathsmaller{A}} 0_a \vee 0_b =_{\mathsmaller{A}} 0_a \wedge 0_b$.\\[1mm]
	$(6)$ \ $0_a \wedge b =_{\mathsmaller{A}} 0_a \wedge (-b) =_{\mathsmaller{A}} 0_{a \wedge b}$.\\[1mm]
	$(7)$ \ $(a \vee b) \wedge a =_{\mathsmaller{A}} 1_b \wedge a =_{\mathsmaller{A}} (a \wedge b) \vee a$.
	
\end{proposition}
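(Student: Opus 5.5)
Each of (1)--(7) will be derived equationally from the axioms $(\swapa_1)$--$(\swapa_{10})$, the condition $(\swapa_{\tii})$, and the general facts of Proposition~\ref{prp: swapalg1}, with no appeal to the underlying set. The order matters, since the later items depend on the earlier ones.

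The first step is (3) and (4). By Proposition~\ref{prp: swapalg1}(7) we have $0 =_A 0_0$, so $(\swapa_{\tii})$ with $a:=0$ gives $0_0 \vee b =_A 1_0 \wedge b$. To get $0 \wedge a =_A 0_a$, I would instead apply $(\swapa_{\tii})$ in the form $0_a \vee 0 =_A 1_a \wedge 0$. The left side is $0_a$ by Proposition~\ref{prp: swapalg1}(9), after rewriting $0_a\vee 0 = 0\vee 0_a$. Hence $0_a =_A 1_a \wedge 0$. To drop the $1_a$, I would use distributivity and $1_a =_A a \vee -a$ from $(\swapa_9)$: $1_a \wedge 0 = (a\wedge 0)\vee(-a\wedge 0)$. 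I expect this to be the main obstacle: showing $a\wedge 0$ absorbs $-a\wedge 0$. My first attempt would be to compare $0\wedge a$ and $0\wedge 1_a$ directly through $(\swapa_{\tii})$ with the roles of $a$ and $b$ exchanged, for instance $0_b\vee a = 1_b\wedge a$ with $b:=0$, which gives $0\vee a = 1_0\wedge a$. Together with Proposition~\ref{prp: swapalg1}(9) and (7), this says $a = 1\wedge a$ and gives nothing new. So I would then take $b:=a$, $a:=0$ in the general law in the form $0_0\vee a = 1_0 \wedge a$. If the direct route fails, I would try to prove $(\ast)$: $0_a \wedge b = 0_{a}\wedge(-b)$. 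Applying $-$ to $(\swapa_{\tii})$ and using $(\swapa_8)$ and Proposition~\ref{prp: swapalg1}(5) gives $1_a\wedge(-b) = 0_a\vee(-b)$, so $(\swapa_{\tii})$ is self-dual under swapping $b\mapsto -b$. From this, (6) and then (3) should follow using $0_a\wedge a=0_a$ (Proposition~\ref{prp: swapalg1}(10)). Item (4) is the $-$-dual of (3): negate $0\wedge(-a) = 0_{-a} = 0_a$ and use $(\swapa_6)$, $(\swapa_8)$ and Proposition~\ref{prp: swapalg1}(5),(6).

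Items (1) and (2) come next. Using Proposition~\ref{prp: swapalg1}(4), $a\wedge(-a\vee b) = (a\wedge -a)\vee(a\wedge b) = 0_a \vee (a\wedge b)$. By $(\swapa_{\tii})$ this equals $1_a\wedge a\wedge b$, which is $a\wedge b$ by Proposition~\ref{prp: swapalg1}(11). Item (2) is dual, via $(\swapa_4)$, $(\swapa_9)$ and $(\swapa_{\tii})$ read right-to-left, which gives $1_a\vee(a\vee b) = $ something. I would rather derive (2) by negating (1) applied to $-a,-b$, using $(\swapa_7)$ and $(\swapa_8)$.

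For (5), I would compute $0_{a\vee b}$ through $(\swapa_9)$ as $(a\vee b)\wedge(-a\wedge -b)$ and expand with distributivity. Each resulting term has the form $x\wedge -x\wedge y = 0_x\wedge y$, and by (6) these become $0_{x\wedge y}$-type terms. The equalities $0_a\vee 0_b = 0_a\wedge 0_b$ follow from $(\swapa_{\tii})$ and (6). Finally, for (7), distributivity gives $(a\vee b)\wedge a = a \vee (b\wedge a)$, and rewriting with (1), (2) and $1_b=b\vee -b$ should yield $1_b\wedge a$. The other equality in (7) is the $\vee/\wedge$ dual.
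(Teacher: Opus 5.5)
\textbf{There is a genuine gap at item (3).} Your proofs of (4), (6), (5) and the middle term of (7) all depend on it, directly or through $(\ast)$, so as written the proposal does not establish them.

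You correctly derive $0_a =_A 1_a \wedge 0$ from $(\swapa_{\tii})$ with $b := 0$. You then expand $1_a = a \vee (-a)$ by distributivity, and that route stalls. The fallback does not help. Negating $(\swapa_{\tii})$ gives $1_a \wedge (-b) =_A 0_a \vee (-b)$, which is just $(\swapa_{\tii})$ instantiated at $-b$. Since $b$ is universally quantified, this carries no new information and cannot produce $(\ast)$. The claim that ``(6) and then (3) should follow'' is asserted, not derived.

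The missing step is one line. It combines your equality $1_a \wedge 0 =_A 0_a$ with Proposition~\ref{prp: swapalg1}(11), (3), (2) and (10):
\[ 0 \wedge a =_A 0 \wedge (1_a \wedge a) =_A (1_a \wedge 0) \wedge a =_A 0_a \wedge a =_A 0_a . \]

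The dependence should then run (3) $\Rightarrow$ (6), not the reverse. First, $0_a \wedge b =_A (0 \wedge a) \wedge b =_A 0 \wedge (a \wedge b) =_A 0_{a \wedge b}$. Second, (3) and Proposition~\ref{prp: swapalg1}(6) give $0 \wedge b =_A 0_b =_A 0_{-b} =_A 0 \wedge (-b)$. Hence $0_a \wedge b =_A a \wedge (0 \wedge b) =_A a \wedge (0 \wedge (-b)) =_A 0_a \wedge (-b)$.

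With (3) and (6) in place, the rest of your plan is fine: (4) from (3) by negation, (1) and (2), and (5) from (6). For the last link in (5), note that $0_a \wedge 0_b =_A (0 \wedge 0) \wedge (a \wedge b) =_A 0_{a \wedge b}$.

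For (7), the appeal to ``(1), (2) and $1_b = b \vee -b$'' is also unsubstantiated. The direct route goes through (4). By Proposition~\ref{prp: swapalg1}(4) and (11), $1_b \wedge a =_A (1 \vee b) \wedge a =_A (a \wedge 1) \vee (a \wedge b) =_A a \vee (a \wedge b)$. Then $a \vee (a \wedge b) =_A (a \vee a) \wedge (a \vee b) =_A (a \vee b) \wedge a$ by $(\swapa_4)$ and $(\swapa_1)$, which you already observed.
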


In analogy to the duality principle for Boolean algebras, we have the following \textit{duality principle for swap algebras} $(\DPSA)$: if an equation $(e)$ holds, then the equation $(e^*)$ also holds, where 
$(e^*)$ follows from $(e)$ by interchanging $0$ and $1$, $0_r$ with $1_r$, $\wedge$ and $\vee$.
For the proof of the following fact we refer to~\cite{MWP24}, Proposition 7.10.

\begin{proposition}\label{prp: totala}
	If $\C A$ is a swap algebra of type $(\ti)$ or $(\tii)$, then its total elements $T(A)$ form a swap algebra both of type $(\ti)$ and $(\tii)$. Actually, 
	$T(A)$ is a  Boolean algebra. Conversely, every Boolean algebra is a swap algebra both of type $(\ti)$ and $(\tii)$.
	
\end{proposition}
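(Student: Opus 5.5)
The proof has three parts: show that the total elements are closed under the operations, show that they satisfy the Boolean identities and hence both type axioms, and show that every Boolean algebra is a swap algebra of both types. The key observation is that an element $a$ with $1_a =_A 1$ also satisfies $0_a =_A 0$. In a swap algebra, $(\swapa_6)$ and $(\swapa_7)$ give $-1 =_A 0$, and Proposition~\ref{prp: swapalg1}(5) gives $-0_a =_A 1_a$. Hence $0_a =_A -(-0_a) =_A -1_a =_A -1 =_A 0$. Consequently, on $T(A)$ the unary maps $0_{-}$ and $1_{-}$ are constantly $0$ and $1$.

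\textbf{Closure of $T(A)$.} First, $1$ is total by Proposition~\ref{prp: swapalg1}(7), and $0$ is total since $1_0 =_A -0_0 =_A -0 =_A 1$. If $a$ is total, then $-a$ is total, because $1_{-a} =_A -0_{-a} =_A -0_a =_A -0 =_A 1$ by Proposition~\ref{prp: swapalg1}(6).

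For joins and meets, type $(\tii)$ is handled by Proposition~\ref{prp: swapalgII1}(5): $0_{a\vee b} =_A 0_a\vee 0_b =_A 0\vee 0 =_A 0$, and applying $-$ gives $1_{a\vee b} =_A 1$; meets are identical.

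For type $(\ti)$ there is no ready-made identity, and this is the step I expect to be the main obstacle. I would use the equivalent characterisations of $0_a$ from~\cite{MWP24}, sections 7 and 9, to compute $1_{a\vee b} =_A (a\vee b)\vee(-a\wedge -b)$ by $(\swapa_8)$ and $(\swapa_9)$. Distributivity $(\swapa_4)$ turns this into $(a\vee b\vee -a)\wedge(a\vee b\vee -b) =_A (1\vee b)\wedge(1\vee a)$. By Proposition~\ref{prp: swapalgI1}(2) this equals $1\wedge 1 =_A 1$. Meets then follow by $(\swapa_8)$ together with the closure of $T(A)$ under $-$.

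\textbf{Boolean identities on $T(A)$.} Restricting the operations, $(\swapa_1)$--$(\swapa_8)$ persist. The inequality on $T(A)$ is inherited from $A$, so $(\swapa_5)$ still holds. Axiom $(\swapa_9)$ becomes $a\vee -a =_A 1$ and $a\wedge -a =_A 0$, and $(\swapa_{10})$ becomes $0\vee a =_A a$. Together with Proposition~\ref{prp: swapalg1}(11), this gives the complemented distributive lattice axioms, so $T(A)$ is a Boolean algebra.

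\textbf{Both type axioms.} On $T(A)$, $(\swapa_{\tii})$ reads $0\vee b =_A 1\wedge b$, which holds by Proposition~\ref{prp: swapalg1}(9) and (11). $(\swapa_{\ti})$ is absorption, which holds in any Boolean algebra; inside $T(A)$ it can also be derived as $(a\vee b)\wedge a =_A (a\vee b)\wedge(a\vee 0) =_A a\vee(b\wedge 0) =_A a$, using $b\wedge 0 =_A 0$. That last identity is standard from the Boolean axioms just obtained: $b\wedge 0 =_A b\wedge(b\wedge -b) =_A b\wedge -b =_A 0$.

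\textbf{Conversely.} Given a Boolean algebra $B$ with an extensional inequality satisfying $0\neq 1$, define $0_a := 0$ and $1_a := 1$ for every $a$. Then $(\swapa_1)$--$(\swapa_{10})$ are the usual Boolean laws, including complementation, De Morgan, and $0\vee a = a$. $(\swapa_{\ti})$ is absorption, and $(\swapa_{\tii})$ reduces to $0\vee b = b = 1\wedge b$.
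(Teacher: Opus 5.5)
Your proof is correct. The paper gives no argument for this proposition; it only defers to Proposition~7.10 of~\cite{MWP24}, so there is no in-paper route to compare against. Your direct equational verification is a self-contained proof. It consists of four steps:
\begin{enumerate}
\item deriving $0_a =_A 0$ from $1_a =_A 1$;
\item showing that $T(A)$ is closed under $-$, $\vee$ and $\wedge$;
\item verifying the Boolean laws on $T(A)$, together with both type axioms;
\item proving the converse by taking $0_{-}, 1_{-}$ constant.
\end{enumerate}

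One simplification. The step you flagged as the main obstacle is not one. You need neither the case split on types nor any characterisation from~\cite{MWP24}.

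Your computation $1_{a\vee b} =_A (a\vee b)\vee(-a\wedge -b) =_A (1\vee b)\wedge(1\vee a)$ uses only $(\swapa_8)$, $(\swapa_9)$, $(\swapa_4)$ and the totality of $a,b$. Moreover, for total $a$, in any swap algebra,
\[
1\vee a =_A 1_a\vee a =_A 1_a =_A 1
\]
by Proposition~\ref{prp: swapalg1}(12), which itself is just $(a\vee -a)\vee a =_A a\vee -a$. So closure of $T(A)$ under $\vee$, and then under $\wedge$ via $(\swapa_8)$ and closure under $-$, follows uniformly. This way you need neither Proposition~\ref{prp: swapalgI1}(2) nor Proposition~\ref{prp: swapalgII1}(5).

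A minor expository point: calling the identities in your ``Boolean identities'' paragraph ``the complemented distributive lattice axioms'' presupposes absorption, which you only derive in the next paragraph. Either cite Huntington's postulates (commutativity, identities, both distributive laws, complements), which you have at that point, or move the absorption argument earlier.
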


Swap algebras of type $(\tii)$ suit better  related to Boolean-valued partial functions rather than swap algebras of type $(\ti)$ (see~\cite{MWP24}). The abstract version of the Boolean-valued partial functions on a set with an extensional inequality is the notion of the so-called \textit{swap ring} in~\cite{MWP24} or \textit{Boolean rig} in~\cite{MWP24b}. In~\cite{MWP24}, Proposition 8.6, it is shown that the duality between swap algebras of type $(\tii)$ and swap rings is a generalisation of the duality between Boolean algebras and Boolean rings. It is not an accident that it is for the (separated) swap algebras of type $(\tii)$ that we have a representation theorem of Stone-type (see section~\ref{sec: repr}). In~\cite{MWP24b} it is shown that given a commutative rig i.e., a structure $(S, +, \cdot, 0, 1)$ such that both reducts $(S, +, 0)$ and $(S, \cdot, 1)$ are commutative monoids and $\cdot$ is both left- and right-distributive over $+$, then its orthogonal idempotents $O(S)$ from a swap ring in a canonical way. Moreover, every Boolean ring $R$ is isomorphic to the total elements of $O(R)$. This is another example of interesting new mathematics with constructive origin!

\section{Swap characters}
\label{sec: shomos}

The theory of partial homomorphisms between various algebraic structures is less developed than the theory of total homomorphisms between them\footnote{The papers~\cite{Pe66, AS70, SHT03} is a short, non-exhaustive list of papers related to the study of partiality in algebra.}. In this section we introduce a notion of a partial, Boolean-valued swap-homomorphism that fits to the proof of our representation theorem in section~\ref{sec: repr}. \textit{Throughout this section $A, B, C$ are swap algebras of type $(\tii)$}\footnote{A field of elements $F$ in a swap algebra of type $(\ti)$ is defined by $(\fii_1, \fii_2)$ and $(\fii_3{'})$, where according to the latter, if $a, b \in F$, then $a \vee b \in F$. A swap character on $A$ is defined as in Definition~\ref{def: partialshomo}.}.

\begin{definition}\label{def: partialshomo}
A field of elements in $A$ is a subset $F$ of $A$, satisfying the following conditions:\\[1mm]
$(\fii_1)$ $1 \in F$.\\[1mm]
$(\fii_2)$ If $a \in F$, then $-a \in F$.\\[1mm]
$(\fii_3)$ $a \vee b \in F$ if and only if $a \in F$ and $b \in F$.\\[1mm]
We call an element $\B f := \big(\dom(\B f), f\big)$ of $\B A^{\circledast}$ a swap character on $A$, if $\dom(\B f)$ is a field of elements in $A$, and for every $a, b \in \dom(\B f)$ the following conditions hold:\\[1mm]
$(\ho_1)$ $f(1) =_{\two} 1$.\\[1mm]
$(\ho_2)$ If $a \in \dom(\B f)$, then $f(-a) =_{\two} \ \sim {\hspace{-1mm}}f(a)$.\\[1mm]
$(\ho_3)$ If $a, b \in \dom(\B f)$, then $f(a \vee b) =_{\two} f(a) \vee f(b) $.\\[1mm]
Let $\widehat{\B A}$ be the set of swap characters on $A$ equipped with the equality and inequality inherited from $\B A^{\circledast}$. If $B$ is a swap algebra of type $(\tii)$, let $\widehat{(\B A, \B B)}$ be the set of partial swap homomorphisms from $A$ to $B$, which are defined similar.
\end{definition}

If $A$ was a swap algebra of type $(\ti)$, then $1 =_A 1 \vee a$, hence $(\fii_3)$ would imply that a field of elements in $A$ is exactly $A$. On the other hand, if $A$ is a swap algebra of type $(\tii)$, then $1 \vee a =_A 1_a$, which is not, in general, equal in $A$ to $1$. If $F$ is a field of elements in $A$, then $0 \in F$, and if $a \in F$, then $1_a, 0_a \in F$ too. Clearly, $A$ itself is a field of elements in $A$, and if $F, G$ are fields of elements in $A$, then $F \cap G$ is also a field of elements in $A$.

\begin{lemma}\label{lem: ex1}
	If $\C X \in \SetExtIneq$ and $x_0 \in X$, then 
	$$\C E^{\Disj}(X, x_0) := \big\{\B A\in \C E^{\Disj}(X) \mid x_0 \in \dom(\B A)\big\}$$
	is a field of elements in the swap algebra of type $(\tii)$ $\C E^{\Disj}(X)$, and the pair
	$$\widehat{\B x_0} := \big(\C E^{\Disj}(X, x_0), \widehat{x_0}\big), \ \ \ \widehat{x_0} \colon \C E^{\Disj}(X, x_0) \to \two, \ \ \  \B A \mapsto \widehat{x_0}(\B A), \ \ \ \B A\in \C E^{\Disj}(X, x_0),$$
	$$\widehat{x_0}(\B A) := \left\{ \begin{array}{ll}
		1   &\mbox{, $x_0 \in A^1$}\\
		0             &\mbox{, $x_0 \in A^0$,}
	\end{array}
	\right. $$
	is a swap character on $\C E^{\Disj}(X)$.
	\end{lemma}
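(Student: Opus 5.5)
The plan is to verify the field axioms using Remark~\ref{rem: basicequal}, and then the character axioms by direct case analysis on whether $x_0$ lies in $A^1$ or $A^0$. First, $\C E^{\Disj}(X, x_0)$ must be an extensional subset of $\C E^{\Disj}(X)$. If $\B A =_{\C E^{\Disj}(X)} \B B$, then $A^1 =_{\C E(X)} B^1$ and $A^0 =_{\C E(X)} B^0$, so $\dom(\B A) =_{\C E(X)} \dom(\B B)$, and membership of $x_0$ transfers.

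For the field axioms:
\begin{itemize}
\item $(\fii_1)$ holds because $1 = 1_{\B X} = (X, \emptys_X)$ and $x_0 \in X = \dom(1_{\B X})$ (up to the union with $\emptys_X$).
\item $(\fii_2)$ holds because $\dom(-\B A) = A^0 \cup A^1$, which equals $\dom(\B A)$.
\item $(\fii_3)$ follows from Remark~\ref{rem: basicequal}: $\dom(\B A \vee \B B) =_{\C E(X)} \dom(\B A) \cap \dom(\B B)$, so $x_0 \in \dom(\B A \vee \B B)$ if and only if $x_0 \in \dom(\B A)$ and $x_0 \in \dom(\B B)$.
\end{itemize}

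Next, $\widehat{x_0}$ is a well-defined function into $\two$. If $x_0 \in \dom(\B A) = A^1 \cup A^0$, the disjunction $x_0 \in A^1 \vee x_0 \in A^0$ gives a value. The two cases cannot both occur with conflicting values, since $x_0 \in A^1 \cap A^0$ yields $x_0 \neq_X x_0$ by Proposition~\ref{prp: intcomp1}(i), and hence $\bot$ by $(\Ineq_1)$. This use of $(\Ineq_1)$ is the one point where $\EFQ$ enters, to conclude $1 =_{\two} 0$. Compatibility with equality holds because $\B A =_{\C E^{\Disj}(X)} \B B$ gives $A^i =_{\C E(X)} B^i$, so $x_0 \in A^1$ if and only if $x_0 \in B^1$, and likewise for $A^0$. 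In effect $\widehat{x_0}(\B A) = \chi_{\B A}(x_0)$.

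For strong extensionality, suppose $\widehat{x_0}(\B A) \neq_{\two} \widehat{x_0}(\B B)$, say $x_0 \in A^1$ and $x_0 \in B^0$. We must produce $x_0 \colon \B A \neq_{\C E^{\Disj}(X)} \B B$ in the sense of Remark~\ref{rem: equivs}(ii). The third disjunct applies: $x_0 \in \dom(\B A) \cap \dom(\B B)$ and $\chi_{\B A}(x_0) = 1 \neq_{\two} 0 = \chi_{\B B}(x_0)$. The symmetric case is handled the same way. Hence $\widehat{\B x_0} \in \B A^{\circledast}$ with $\B A := \C E^{\Disj}(X)$.

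Finally, the character axioms:
\begin{itemize}
\item $(\ho_1)$: $x_0 \in X = 1_{\B X}^1$, so $\widehat{x_0}(1_{\B X}) = 1$.
\item $(\ho_2)$: since $-\B A = (A^0, A^1)$, $x_0 \in A^1$ gives $\widehat{x_0}(-\B A) = 0$, and $x_0 \in A^0$ gives $1$.
\item $(\ho_3)$: for $\B A, \B B \in \C E^{\Disj}(X, x_0)$, split into the four cases $x_0 \in A^i \cap B^j$. Reading off Definition~\ref{def: opers}, $x_0$ lies in $(\B A \vee \B B)^1$ unless $i = j = 0$, in which case it lies in $(\B A \vee \B B)^0 = A^0 \cap B^0$. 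This matches $\max\{i, j\}$.
\end{itemize}

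The only genuinely delicate step is the well-definedness of $\widehat{x_0}$. There the overlap case is dispatched through strong disjointness, and the proof relies on intuitionistic rather than minimal logic.
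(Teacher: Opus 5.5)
Your proof is correct and follows the paper's route. You prove $(\fii_3)$ via Remark~\ref{rem: basicequal}, verify $(\ho_1)$--$(\ho_3)$ by case analysis on $x_0 \in A^1$ or $x_0 \in A^0$, and get strong extensionality by taking $x_0$ itself as the witness of $\B A \neq_{\C E^{\Disj}(X)} \B B$. You also add welcome checks, not spelled out in the paper, that the domain is extensional and that $\widehat{x_0}$ respects equality.

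One correction to your side remark on logic. Since $\bot := 0 =_{\Nat} 1$ and $=_{\two}$ is inherited from $=_{\Nat}$, the overlap case gives $1 =_{\two} 0$ directly from $(\Ineq_1)$, so that step is within minimal logic and uses no $\EFQ$. The paper (Remark~\ref{rem: sep1}) attributes the dependence on $\INT$ to the swap-algebra structure of $\C E^{\Disj}(X)$ (Proposition~\ref{prp: isswapa1}), not to this verification.
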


\begin{proof}
Clearly, $(X, \emptys_X) \in \C E^{\Disj}(X, x_0)$, and if $\B A \in \C E^{\Disj}(X, x_0)$, then $-\B A \in \C E^{\Disj}(X, x_0)$. Next, we show that if
$\B A, \B B \in \C E^{\Disj}(X, x_0)$, then $\B A\vee \B B \in \C E^{\Disj}(X, x_0)$. By hypothesis $x_0 \in A^1 \cup A^0$ and $x_0 \in B^1 \cup B^0$, hence by Remark~\ref{rem: basicequal} we have that\footnote{This equality is satisfied only by the second algebra of complemented subsets.}
$x_0 \in (A^1 \cup A^0) \cap (B^1 \cup B^0) =_{\C E(X)} \dom(\B A \vee \B B)$. Conversely, if $x_0 \in \dom(\B A \vee \B B)$, then by Remark~\ref{rem: basicequal} again we have that $x_0 \in \dom(\B A)$ and $x_0 \in \dom(\B B)$ i.e., $\B A\in \C E^{\Disj}(X, x_0)$ and $\B B \in \C E^{\Disj}(X, x_0)$. Clearly, $\widehat{x_0}(\B A)(X, \emptys_X) := 1$, and $\widehat{x_0}(-\B A) := \sim\big(\widehat{x_0}(\B A)\big)$. By definition
$$\widehat{x_0}(\B A \vee \B B) := \left\{ \begin{array}{ll}
	1   &\mbox{, $x_0 \in (A^1 \cap B^1) \cup (A^1 \cap B^0) \cup (A^0 \cap B^1)$}\\
	0             &\mbox{, $x_0 \in A^0 \cap B^0$.}
\end{array}
\right. $$	
Moreover, $\widehat{x_0}(\B A) \vee \widehat{x_0}(\B B) := 1$, if $\widehat{x_0}(\B A) := 1 :\TOT x_0 \in A^1$ or  $\widehat{x_0}(\B B) := 1 :\TOT x_0 \in B^1$. As $x_0 \in \dom(\B A \vee \B B)$, this happens exactly when $x_0 \in (A^1 \cap B^1) \cup (A^1 \cap B^0) \cup (A^0 \cap B^1)$. Clearly, 
$\widehat{x_0}(\B A) \vee \widehat{x_0}(\B B) := 0$, if and only if $\widehat{x_0}(\B A) := 0 =: \widehat{x_0}(\B B)$ i.e., if and only if 
$x_0 \in A^0 \cap B^0$. Finally, we show that $\widehat{x_0}$ is strongly extensional. For that, let $\B A, \B B \in \C E^{\Disj}(X, x_0)$, such that  $\widehat{x_0}(\B A) \neq_{\two} \widehat{x_0}(\B B)$. As $x_0 \in (A^1 \cup A^0) \cap (B^1 \cup B^0)$, by Definition~\ref{def: csineq} we get that 
$x_0 \colon \B A \neq_{\C E^{\Disj}(X)} \B B$.
\end{proof}

Next follows the partial counterpart to the separation of the points of a set $X$ by a given set $F$ of Boolean-valued (total) functions on $X$, i.e., to the property
$$\forall_{x, y \in X}\bigg[\forall_{f \in F}\big(f(a) =_{\two} f(b)\big) \To x =_X y\bigg].$$

\begin{definition}\label{def: sep}
If $\Phi \subseteq A^{\circledast}$, we say that $A$ is $\Phi$-separated, or $\Phi$ separates the points of $A$, if 
\begin{align*}
	 & \forall_{a, b \in A}\bigg[\forall_{\B f \in \Phi}\bigg(\big(a \in \dom(\B f) \To b \in \dom(\B f)\big) \wedge  \\
	& \ \ \ \ \ \ \ \ \ \ \ \ \ \ \ \ \big(b \in \dom(\B f) \To a \in \dom(\B f)\big) \wedge \\
	& \ \ \ \ \ \ \ \ \ \ \ \ \ \ \ \ \big(a \in \dom(\B f) \wedge b \in \dom(\B f) \To f(a) =_{\two} f(b)\big)\bigg)\\
	& \ \ \ \ \ \ \ \ \ \ \ \ \ \ \ \ \To a =_A b\bigg].
\end{align*}
If $\widehat{A}$ separates the points of $A$, we call $A$ a separated swap algebra\footnote{A separated swap algebra of type $(\ti)$ is defined similarly. In the case of the Boolean algebra $\C D_m$, where $m > 1$ is square-free, a field in $D_m$ satisfies $(\fii_3)$, hence $(\fii_3{'})$ too, and a character on $D_m$ is a total Boolean-homomorphism. If $m = p_1 \cdot  \ldots \cdot p_k$, then using the characters $\widehat{p_i} \colon D_m \to \two$, where $i \in \{1, \ldots, k\}$,  and
$$\widehat{p_i}(n) := \left\{ \begin{array}{ll}
	1   &\mbox{, $p_i | n$}\\
	0             &\mbox{, $p_i | \frac{m}{n}$,}
\end{array}
\right. $$
one can show that the Boolean algebra $\C D_m$ is separated. If $p$ is a prime number, then, according to our definition in footnote 14, $D_{p^2} = \{1, p, p^2\}$ has two fields ($\{1, p^2\}$ and $D_{p^2}$) and one partial character defined on $\{1, p\}$. The swap algebra $D_{p^3} = \{1, p, p^2, p^3\}$ has two fields ($\{1, p^3\}$ and $D_{p^3}$) and two characters. The swap algebra $D_{p^4} = \{1, p, p^2, p^3, p^4\}$ has four fields ($\{1, p^4\}$, $\{1, p^2, p^4\}$, $\{1, p, p^3, p^4\}$, and $D_{p^4}$) and two characters. The combinatorics related to the proof that for an arbitrary $m > 1$ the swap algebra $\C D_m$ of type $(\ti)$ is separated are not going to be included here.}
\end{definition}

\begin{proposition}\label{prp: sep1}
The swap algebra $\C E^{\Disj}(X)$ of type $(\tii)$ is separated.
\end{proposition}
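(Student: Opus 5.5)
The plan is to use only the pointed swap characters $\widehat{\B x_0}$ of Lemma~\ref{lem: ex1}. We take $\Phi := \{\widehat{\B x_0} \mid x_0 \in X\} \subseteq \widehat{\C E^{\Disj}(X)}$. Since separation by a subset $\Phi$ of $\widehat{\B A}$ trivially implies separation by all of $\widehat{\B A}$, it suffices to show that $\Phi$ separates the points of $\C E^{\Disj}(X)$. So we fix $\B A, \B B \in \C E^{\Disj}(X)$ and assume that for every $\B f \in \widehat{\C E^{\Disj}(X)}$ the three clauses of Definition~\ref{def: sep} hold. The goal is $\B A =_{\C E^{\Disj}(X)} \B B$, i.e.\ $A^1 \subseteq B^1$, $B^1 \subseteq A^1$, $A^0 \subseteq B^0$ and $B^0 \subseteq A^0$.

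The key steps, in order, are as follows. Let $x \in A^1$. Then $x \in \dom(\B A)$, so $\B A \in \C E^{\Disj}(X, x) = \dom(\widehat{\B x})$. By the first clause of the hypothesis applied to $\B f := \widehat{\B x}$, we get $\B B \in \dom(\widehat{\B x})$, i.e.\ $x \in B^1 \cup B^0$. The third clause then gives $\widehat{x}(\B A) =_{\two} \widehat{x}(\B B)$. Since $x \in A^1$ we have $\widehat{x}(\B A) = 1$, so $\widehat{x}(\B B) = 1$.

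It remains to extract $x \in B^1$ from $\widehat{x}(\B B) = 1$. By the definition of $\widehat{x}$ in Lemma~\ref{lem: ex1}, and since $x \in B^1 \cup B^0$, we can split into the two cases $x \in B^1$ and $x \in B^0$. In the first case we are done. In the second case the definition forces $\widehat{x}(\B B) = 0$, which together with $\widehat{x}(\B B) = 1$ yields $0 =_{\Nat} 1$, i.e.\ $\bot$, and hence $x \in B^1$ by $\EFQ$.

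Alternatively, one can avoid $\EFQ$ here by treating the case split directly as the computation of $\widehat{x}(\B B)$. The value $1$ is produced only from the case $x \in B^1$, since $\widehat{x}$ is defined by cases on membership and $\two$ is discrete. This is the only step needing care. I would first state it using $\EFQ$, consistent with Proposition~\ref{prp: sep1} being stated within the $\INT$ framework of $\C E^{\Disj}(X)$, and remark that with a Boolean inequality this step could later be made minimal.

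The remaining inclusions follow by symmetry. Swapping the roles of $\B A$ and $\B B$ uses the second clause of the hypothesis and gives $B^1 \subseteq A^1$. The same argument with the value $0$ in place of $1$ gives $A^0 \subseteq B^0$ and $B^0 \subseteq A^0$. Combining the four inclusions gives $\B A =_{\C E^{\Disj}(X)} \B B$, so $\C E^{\Disj}(X)$ is separated.
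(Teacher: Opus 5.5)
Your proof is correct and is essentially the paper's argument: you apply the hypothesis to the pointed swap characters $\widehat{\B x_0}$ of Lemma~\ref{lem: ex1}, use the domain clauses to get $\B B \in \dom(\widehat{\B x_0})$, and use the value clause to transfer membership, which yields the four inclusions. The one difference is that you make explicit the case split behind the step from $\widehat{x_0}(\B B) =_{\two} 1$ to $x_0 \in B^1$ (using $\EFQ$ in the branch $x_0 \in B^0$), whereas the paper reads this step off directly from the definition of $\widehat{x_0}$.
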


\begin{proof}
If $\B A, \B B \in \C E^{\Disj}(X)$, we suppose that
\begin{align*}
	& \forall_{\B f \in \widehat{\C E^{\Disj}(X)}}\bigg(\big(\B A \in \dom(\B f) \To \B B \in \dom(\B f)\big) \wedge  \\
	& \ \ \ \ \ \ \ \ \ \ \ \ \ \ \ \ \big(\B B \in \dom(\B f) \To \B A \in \dom(\B f)\big) \wedge \\
	& \ \ \ \ \ \ \ \ \ \ \ \ \ \ \ \ \big(\B A \in \dom(\B f) \wedge \B B \in \dom(\B f) \To f(\B A) =_{\two} f(\B B)\big)\bigg).
\end{align*}
If $x_0 \in A^1$, then by Lemma~\ref{lem: ex1} we have that $\widehat{\B x_0} \in  \widehat{\C E^{\Disj}(X)}$, and the hypothesis $x_0 \in A^1$ implies that $\B A \in \dom(\widehat{\B x_0})$. By the above hypothesis we get $\B B \in \dom(\widehat{\B x_0})$, and as $\B A \in \dom(\widehat{\B x_0})$ and $\B B \in \dom(\widehat{x_0})$, we also conclude that $\widehat{x_0}(\B A) =_{\two} \widehat{x_0}(\B B)$
i.e., $x_0 \in B^1$, and hence $A^1 \subseteq B^1$. Working similarly, we get the inclusions $B^1 \subseteq A^1$, $B^0 \subseteq A^0$, and $A^0 \subseteq B^0$.	
\end{proof}

\begin{remark}\label{rem: sep1}
\normalfont
Notice that due to the proof of Proposition~\ref{prp: isswapa1} within $\INT$, the proof of Proposition~\ref{prp: sep1}, as a whole, is within $\INT$ too. Of course, the proof of the separating property alone is within $\MIN$.
\end{remark}

\begin{remark}\label{rem: sep2}
\normalfont
It is the proof of Proposition~\ref{prp: sep1} that forces us to consider Definition~\ref{def: sep}, instead of the simpler formulation with only the last conjunct.
In section~\ref{sec: stonecech} we show that by a Stone-\v{C}ech theorem on swap algebras of type $(\tii)$ every swap algebra of type $(\tii)$ induces a separated swap algebra of type $(\tii)$, hence there is a plethora of separated swap algebras of type $(\tii)$. 
\end{remark}

\begin{proposition}\label{prp: compswap}
If $\B h \in \widehat{(\B A, \B B)}$ and $\B g \in \widehat{(\B B, \B C)}$, then $\B g \circ \B h \in \widehat{(\B A, \B C)}$. Moreover, $\B \id_A := (A, \id_A) \in \widehat{(\B A, \B A)}$. Consequently, the pair $(\SwapAlg_{\tii}, \PartSwapHom)$ of swap algebras of type $(\tii)$ and partial swap-homomorphisms between them is a category\footnote{In~\cite{MWP24} the subcategory of swap algebras of type $(\tii)$ with total swap homomorphisms was considered only.}.
\end{proposition}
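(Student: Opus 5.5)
The plan is to first fix what a partial swap homomorphism $\B h \in \widehat{(\B A, \B B)}$ is, reading "defined similar" in Definition~\ref{def: partialshomo}. It is a pair $\B h := (\dom(\B h), h)$ with $\dom(\B h)$ a field of elements in $A$ and $h \colon \dom(\B h) \to B$ strongly extensional. It must satisfy $h(1) =_B 1$, $h(-a) =_B -h(a)$ and $h(a \vee b) =_B h(a) \vee h(b)$ for all $a, b \in \dom(\B h)$. For the composite I would take
$$\dom(\B g \circ \B h) := \{a \in \dom(\B h) \mid h(a) \in \dom(\B g)\}, \ \ \ (g \circ h)(a) := g(h(a)).$$
This is an extensional subset of $A$: $\dom(\B h)$ is extensional, $h$ is a function, and $\dom(\B g)$ is extensional in $B$.

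The main step is to show that $\dom(\B g \circ \B h)$ is a field of elements in $A$.
\begin{itemize}
\item For $(\fii_1)$: $1 \in \dom(\B h)$ and $h(1) =_B 1 \in \dom(\B g)$, so $1$ lies in the domain by extensionality of $\dom(\B g)$.
\item For $(\fii_2)$: if $a$ is in the domain, then $-a \in \dom(\B h)$ and $h(-a) =_B -h(a)$. Since $h(a) \in \dom(\B g)$, also $-h(a) \in \dom(\B g)$.
\item For $(\fii_3)$, which I expect to be the only delicate point: if $a \vee b$ is in the domain, then $a \vee b \in \dom(\B h)$, so $a, b \in \dom(\B h)$ by $(\fii_3)$ for $\dom(\B h)$. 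Hence $h(a \vee b) =_B h(a) \vee h(b)$ is defined, and it lies in $\dom(\B g)$. Now $(\fii_3)$ for $\dom(\B g)$ gives $h(a), h(b) \in \dom(\B g)$.
\item Conversely, if $a, b$ are in the domain, then $a \vee b \in \dom(\B h)$ and $h(a \vee b) =_B h(a) \vee h(b) \in \dom(\B g)$.
\end{itemize}
The point to watch is that $h(a \vee b)$ may only be rewritten after establishing $a, b \in \dom(\B h)$. This is exactly why the biconditional in $(\fii_3)$ is needed.

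Next, the homomorphism conditions follow by chaining the equations for $h$ and $g$, using that $g$ is a function:
\begin{itemize}
\item $g(h(1)) =_C g(1) =_C 1$;
\item $g(h(-a)) =_C g(-h(a)) =_C -g(h(a))$;
\item $g(h(a \vee b)) =_C g(h(a) \vee h(b)) =_C g(h(a)) \vee g(h(b))$.
\end{itemize}
Strong extensionality of the composite is immediate: $g(h(a)) \neq_C g(h(b))$ gives $h(a) \neq_B h(b)$, and this gives $a \neq_A b$, with the inequalities on the domains inherited.

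For the identity, $A$ is a field of elements in itself, as noted after Definition~\ref{def: partialshomo}. The conditions on $\id_A$ are trivial, and $\id_A$ is strongly extensional.

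To conclude that we have a category, I would check associativity and the unit laws with respect to the equality of partial functions of Definition~\ref{def: extpartialfunction}. For associativity, both $\dom((\B k \circ \B g) \circ \B h)$ and $\dom(\B k \circ (\B g \circ \B h))$ unfold to $\{a \in \dom(\B h) \mid h(a) \in \dom(\B g) \wedge g(h(a)) \in \dom(\B k)\}$, and on this set the values agree. The unit laws hold since the domain of the identity is the whole algebra. I would also note that composition respects the equality of partial homomorphisms: equal domains and pointwise equal values are preserved under the above construction.
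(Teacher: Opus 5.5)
Your proof is correct and takes the same route as the paper's. Both define $\dom(\B g \circ \B h)$ as the $a \in \dom(\B h)$ with $h(a) \in \dom(\B g)$, verify the field conditions (the paper writes out only $(\fii_3)$ and $(\ho_3)$ and calls the rest similar), and chain the homomorphism conditions of $\B h$ and $\B g$. Your version is more complete: you state that $a, b \in \dom(\B h)$ must come from $(\fii_3)$ before $h(a \vee b)$ is rewritten, and you check associativity, the unit laws and compatibility of composition with equality, all of which the paper leaves implicit.
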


\begin{proof}
We show only conditions $(\fii_3)$ and $(\ho_3)$ for $\dom(\B g \circ \B h)$ and $\B g \circ \B h$, respectively, and for the rest we work similarly. Let $a, b \in A$, such that $a \vee b \in \dom(\B g \circ \B h)$ i.e., $a \vee b \in \dom(\B h)$ and $h(a \vee b) =_B h(a) \vee h(b) \in \dom(\B g)$. By condition $(\fii_3)$ on $\dom(\B g)$ we have that $h(a), h(b)  \in \dom(\B g)$ i.e., $a, b \in \dom(\B g \circ \B h)$. Conversely, if $a, b \in \dom(\B g \circ \B h)$, then $a, b \in \dom(\B h)$ and $h(a), h(b) \in \dom(\B g)$. By condition $(\fii_3)$ on $\dom(\B h)$ and $\dom(\B g)$ we have that $a \vee b \in dom(\B h)$ and $h(a) \vee h(b) =_B h(a \vee b)  \in \dom(\B g)$, respectively. Hence,  $a \vee b \in \dom(\B g \circ \B h)$. Condition $(\ho_3)$ for $\B g \circ \B h$ follows from the same condition on $\B g$ and $\B h$.
\end{proof}

\section{Sets with a Boolean inequality}
\label{sec: bool}

In this section we introduce sets with a Boolean inequality, that is sets $X$ with an ``internal'' falsum $\bot_X$. These sets allow a book-keeping of the use of the Ex falso principle $(\EFQ)$ in constructive mathematics. Moreover, if we work with sets with a Boolean inequality, then many implications of the form $\bot_X \To P$ can be shown within minimal logic. The corresponding weak negation $\neg_X P := P \To \bot_X$ associated to a set $X$ with $\bot_X$ has been used successfully already in constructive algebra\footnote{We thank Peter Schuster for pointing~\cite{LQ15} for the use of a local bottom $\bot_R$ and a local negation $\neg_R P := P \to \bot_R$, where $R$ is a ring, in constructive algebra. This attitude is followed also in~\cite{BS24}.} by Lombardi and Quitt\`e in~\cite{LQ15}.
If we restrict to swap algebras with a Boolean inequality, then the proof of the Stone representation theorem for swap algebras of type $(\tii)$ in the next section is within minimal logic.

\begin{definition}\label{def: Booleineq}
A set with a Boolean inequality is a structure $\C X := (X, =_X, \neq_X, 0_X, 1_X)$, such that $(X, =_X)$ is a set and $\neq_X$ is an extensional relation on $X$, such that the following conditions hold:\\[1mm]
$(\bool_1)$ $0_X \neq_X 1_X$.\\[1mm]
$(\bool_2)$ $\forall_{x,y \in X}\big(x =_X y \ \& \ x \neq_X y \To 0_X =_X 1_X\big)$.\\[1mm]
We call $0_X$ and $1_X$ the booleans of $X$. The local top within $X$, the  local bottom\footnote{This is in complete accordance with Bishop's use of a local empty subset $\ \emptys_X$, for each set $X$. This practice is followed 
by Bridges and V\^{\i}\c{t}\u{a} in~\cite{BV06}, but not by Bridges and Richman in~\cite{BR87}.} within $X$, and the local weak negation within $X$ of a formula $P$ in $\BST$ are the following formulas, respectively,
$$\top_X := 0_X \neq 1_X,$$
$$\bot_X := 0_X =_X 1_X,$$
$$\neg_X P := P \To \bot_X.$$
In proofs we use special rules such as the following\footnote{With this rule we avoid the standard Gentzen elimination-rule for disjunction: $P \ \& \ \neg_X P \To Q$ and $Q \To Q$, as that would mean that $\bot_X \To Q$, which is a form of $\EFQ$.} $: \ \big[(P \vee Q) \ \& \ 
\neg_X P\big] \To Q$, which concern the relation between the basic logical connectives with the defined negation $\neg_X P$ of a formula $P$. 
\end{definition}

\begin{example}\label{ex: bool1}
\normalfont
The primitive structure $\textbf{N} := \big(\Nat, =_{\Nat}, \neq_{\Nat}, 0_{\Nat}, 1_{\Nat}\big)$ and the (defined) structure $\B 2 := \big(\D 2, =_{\D 2}, \neq_{\D 2}, 0_{\D 2}, 1_{\D 2}\big)$ are sets with a  Boolean inequality. Conditions $(\bool_1)$ and $(\bool_2)$ hold axiomatically for $\Nat$.
\end{example}

\begin{lemma}\label{lem: botnat}
	\normalfont (i)
	\itshape $\bot_{\Nat} \To \forall_{n \in \Nat}\big(n \neq_{\Nat}
 n\big)$.\\[1mm]
 	\normalfont (ii)
 \itshape $\bot_{\Nat} \To \forall_{n \in \Nat}\big(0_{\Nat} =_{\Nat}
 	n \big)$.\\[1mm]
 	 		\normalfont (iii)
 	\itshape $\bot_{\Nat} \To \forall_{n, m \in \Nat}\big(n =_{\Nat}
 		m\big)$.\\[1mm]
 			\normalfont (iv)
 		\itshape $\bot_{\Nat} \To \forall_{n, m \in \Nat}\big(n \neq_{\Nat}
 		m\big)$.

 \end{lemma}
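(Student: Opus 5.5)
The plan is to work entirely within minimal logic and to derive everything from the hypothesis $\bot_{\Nat} :\TOT 0 =_{\Nat} 1$ by induction on $\Nat$, together with the basic primitive facts on $=_{\Nat}$ and $\neq_{\Nat}$. The intended order is (ii), (iii), (i), (iv): first collapse all numbers to $0$, then use the extensionality of $\neq_{\Nat}$ to transport the primitive inequality $0 \neq_{\Nat} 1$ (condition $(\bool_1)$ for $\textbf{N}$) to any pair.

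For (ii), assume $0 =_{\Nat} 1$ and prove $\forall_{n}(0 =_{\Nat} n)$ by induction. The base case $0 =_{\Nat} 0$ is reflexivity. For the step, suppose $0 =_{\Nat} n$. Since the successor is a function, $1 =_{\Nat} n+1$. Transitivity with $0 =_{\Nat} 1$ then gives $0 =_{\Nat} n+1$.

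For (iii), given $n, m$, apply (ii) to get $0 =_{\Nat} n$ and $0 =_{\Nat} m$; symmetry and transitivity yield $n =_{\Nat} m$.

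For (iv), given $n, m$, we have $0 =_{\Nat} n$ and $1 =_{\Nat} m$ by (iii). The primitive inequality gives $0 \neq_{\Nat} 1$. Extensionality $(\Ineq_2)$ of $\neq_{\Nat}$, which holds because $\neq_{\Nat}$ is an apartness relation, then transports this to $n \neq_{\Nat} m$. Statement (i) is the special case $m := n$ of (iv).

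The only delicate point is making sure no use of $\EFQ$ slips in. In particular, we must not argue ``$\bot$, hence anything''. Every conclusion has to come from the positive data: $0 =_{\Nat} 1$, induction, the equivalence-relation properties of $=_{\Nat}$, functionality of the successor, $0 \neq_{\Nat} 1$, and the extensionality of $\neq_{\Nat}$. The step I expect to require the most care is the inductive step in (ii). It needs the successor to respect $=_{\Nat}$, which is immediate for the primitive equality on $\Nat$ (or, if $=_{\Nat}$ is taken as literal identity of numerals, via Peano's axioms read in the defined sense). With that in hand, everything else is routine.
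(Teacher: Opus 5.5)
Your proof is correct, but it handles (i) and (iv) differently from the paper. Your (ii) and (iii) match the paper exactly.

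The paper proves (i) by a separate induction on $n$:
\begin{itemize}
\item the base case $0 \neq_{\Nat} 0$ comes from $0 \neq_{\Nat} 1$ and $0 =_{\Nat} 1$ by extensionality;
\item the step uses the strong injectivity of the successor, $n \neq_{\Nat} m \To S(n) \neq_{\Nat} S(m)$;
\item that injectivity is obtained from the predecessor function, the decidability of $\neq_{\Nat}$, and the special rule $[(P \vee Q) \ \& \ \neg_{\Nat} P] \To Q$ of Definition~\ref{def: Booleineq}.
\end{itemize}
Only then does the paper derive (iv) from (i), (iii) and extensionality.

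You instead get (iv) in one stroke, by transporting $0 \neq_{\Nat} 1$ along $0 =_{\Nat} n$ and $1 =_{\Nat} m$, and read off (i) as the case $m := n$. This is essentially the argument of Remark~\ref{rem: ontheproof}, with $\Nat$ collapsed by induction rather than by multiplication.

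Your route is shorter and needs fewer ingredients: no predecessor, no decidability, no special disjunction rule. That makes it more transparent that the proof stays within $\MIN$. The paper's route has the minor advantages of proving (i) independently of (ii) and producing the strong injectivity of the successor as a by-product.

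One small caveat concerns your justification of extensionality. The paper takes the extensionality of $\neq_{\Nat}$ as primitive. You derive it from $\neq_{\Nat}$ being an apartness relation, but the standard derivation discards the impossible cases of cotransitivity by $\EFQ$. For the $\MIN$ book-keeping, either cite extensionality as primitive, or discharge those cases with the rule of Definition~\ref{def: Booleineq}.
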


\begin{proof}
(i) By induction on $n \in \Nat$. First we show that $0 \neq_{\Nat} 0$. As $0_{\Nat} \neq_{\Nat} 1_{\Nat}$, and since by hypothesis $0_{\Nat} =_{\Nat} 1_{\Nat}$, by the extensionality of $\neq_{\Nat}$ we get $0 \neq_{\Nat} 0$. The implication $n \neq_{\Nat} n \To S(n) \neq_{\Nat} S(n)$ follows by the injectivity of the successor function $S \colon \Nat \to \Nat$ i.e., $n \neq_{\Nat} m \To S(n) \neq_{\Nat}  S(m)$, for every $n, m \in \Nat$. To show the latter, we suppose that $S(n) =_{\Nat} S(m)$ and, applying the predecessor function, we get $n =_{\Nat} m$. As $n \neq_{\Nat} m$ by hypothesis, we have that $\bot_{\Nat}$. By the rule on disjunction in Definition~\ref{def: Booleineq} and the decidability of $\neq_{\Nat}$ we get $S(n) \neq_{\Nat} S(m)$.\\
(ii) Again we use induction on $n \in \Nat$. The base case $0_{\Nat} =_{\Nat} 0_{\Nat}$ follows immediately. The implication 
$0_{\Nat} =_{\Nat} 	n \To 0_{\Nat} =_{\Nat} S(n)$ follows from the fact that if $0_{\Nat} =_{\Nat} n$, then $S(0_{\Nat}) =_{\Nat} S(n)$ i.e., $1_{\Nat} =_{\Nat} S(n)$, which by the hypothesis of $\bot_{\Nat}$ it becomes the required $0_{\Nat} =_{\Nat} S(n)$.\\
(iii) It follows immediately by case (ii) and the fundamental properties of $=_{\Nat}$.\\
(iv) It follows immediately by cases (i), (iii) and the given extensionality of $\neq_{\Nat}$.
\end{proof}

\begin{remark}\label{rem: ontheproof}
\normalfont
If we use the properties of multiplication on $\Nat$, the previous proposition can be shown without induction: if $0_{\Nat} =_{\Nat} 1_{\Nat}$, then $0_{\Nat} =_{\Nat}  0_{\Nat} \cdot n = 1_{\Nat} \cdot n =_{\Nat} n$, for every $n \in 
\Nat$, and hence $n =_{\Nat} m$, for every $n, m \in \Nat$. By the primitive extensionality of $\neq_{\Nat}$ the inequality $0_{\Nat} \neq_{\Nat} 1_{\Nat}$, together with the equalities $n =_{\Nat} 0_{\Nat}$ and $1_{\Nat} =_{\Nat} n$ imply that $n \neq_{\Nat} n$, for every $n \in \Nat$. These proofs are extended to an arbitrary ring $R$ with a Boolean inequality.
\end{remark}

\begin{definition}\label{def: complete}
If $\C X := (X, =_X, \neq_X, 0_X, 1_X)$ is a set with a Boolean inequality, we call $\neq_X$ complete, if we can show within $\MIN$ that
$$\bot_X \To \forall_{x \in X}(x \neq_X x).$$
\end{definition}

By Lemma~\ref{lem: botnat}(i) $\textbf{N}$ is a set with a complete Boolean inequality.

\begin{proposition}\label{prp: subnat}
The structures $\big(\C E(\Nat), =_{\C E(\Nat)}, \neq_{\C E(\Nat)}, \emptys_{\Nat}, \Nat\big)$,
$\big(\C E^{\Disj}(\Nat), =_{\C E^{\Disj}(\Nat)}, \neq_{\C E^{\Disj}(\Nat)}, (\emptys_{\Nat}, \Nat), (\Nat, \emptys_{\Nat})\big)$ are sets with a Boolean inequality.
\end{proposition}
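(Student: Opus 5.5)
We have to verify, for each of the two structures, that the relation is extensional and that conditions $(\bool_1)$ and $(\bool_2)$ hold. Extensionality of $\neq_{\C E(\Nat)}$ was already noted after Definition~\ref{def: subineq}, and extensionality of $\neq_{\C E^{\Disj}(\Nat)}$ is Proposition~\ref{prp: ext2}(i). So the real work is $(\bool_1)$ and $(\bool_2)$, and we want to do it within $\MIN$, using only that $\textbf{N}$ is a set with a complete Boolean inequality (Lemma~\ref{lem: botnat}).

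For $(\bool_1)$ in $\C E(\Nat)$ we take the witness $0 \in \Nat$. We have $0 \in \Nat$, and we need $0 \in (\emptys_{\Nat})^{\neq_{\Nat}}$, i.e.\ $0 \neq_{\Nat} a$ for every $a$ with $a \neq_{\Nat} a$. From $a \neq_{\Nat} a$ we get $\bot_{\Nat}$ by axiom $(\bool_2)$ for $\Nat$ (Example~\ref{ex: bool1}). Lemma~\ref{lem: botnat}(iv) then gives $0 \neq_{\Nat} a$ without $\EFQ$. For $\C E^{\Disj}(\Nat)$ we use the witness $0$ again, in the third disjunct of Remark~\ref{rem: equivs}(ii). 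The point $0$ lies in $\Nat \cup \emptys_{\Nat}$ for both pairs, and $\chi_{(\emptys_{\Nat}, \Nat)}(0) = 0 \neq_{\two} 1 = \chi_{(\Nat, \emptys_{\Nat})}(0)$. This is exactly the remark after Definition~\ref{def: swapalgebra} about $(\swapa_5)$.

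For $(\bool_2)$ in $\C E(\Nat)$, suppose $A =_{\C E(\Nat)} B$ and $A \neq_{\C E(\Nat)} B$, with witness $x$. In the first case $x \in A$ and $x \in B^{\neq_{\Nat}}$. Transporting $x \in A$ along $A \subseteq B$ gives $x \in B$, hence $x \neq_{\Nat} x$, hence $\bot_{\Nat}$. The second case is symmetric. To conclude $\emptys_{\Nat} =_{\C E(\Nat)} \Nat$ we need $\Nat \subseteq \emptys_{\Nat}$, i.e.\ $n \neq_{\Nat} n$ for all $n$. This is Lemma~\ref{lem: botnat}(i), and the reverse inclusion is trivial. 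For $\C E^{\Disj}(\Nat)$ we argue the same way, via Remark~\ref{rem: equivs}. The two domain-cases reduce as above to some $x \neq_{\Nat} x$. In the value-case we have $x$ in both domains with $\chi_{\B A}(x) \neq_{\two} \chi_{\B B}(x)$, while equality of $\B A, \B B$ (Remark~\ref{rem: equivs}(i)) forces $\chi_{\B A}(x) =_{\two} \chi_{\B B}(x)$. This yields $\bot_{\two}$, i.e.\ $0 =_{\Nat} 1$, which is $\bot_{\Nat}$. From $\bot_{\Nat}$ we must show $(\emptys_{\Nat}, \Nat) =_{\C E^{\Disj}(\Nat)} (\Nat, \emptys_{\Nat})$, i.e.\ $\Nat \subseteq \emptys_{\Nat}$ in both coordinates. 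Lemma~\ref{lem: botnat}(i) gives this again.

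The main obstacle is to avoid $\EFQ$ throughout. This means every inference ``from $\bot$ conclude anything'' has to be replaced by the specific consequences in Lemma~\ref{lem: botnat}. The delicate point is the value-case in $\C E^{\Disj}(\Nat)$. There one must observe that $\bot_{\two}$ coincides with $\bot_{\Nat}$ because $\two$ is an extensional subset of $\Nat$. One must also check that only the inclusion $\Nat \subseteq \emptys_{\Nat}$, and not some arbitrary proposition, is ever needed.
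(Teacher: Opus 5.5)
Your proposal is correct and follows the paper's proof essentially step by step. You use the same witness $0$ for $(\bool_1)$ in both structures, the same case analysis on the witness of $\B A \neq_{\C E^{\Disj}(\Nat)} \B B$ for $(\bool_2)$ reducing to $\bot_{\Nat}$ (via $\bot_{\two}$ in the value case), and Lemma~\ref{lem: botnat}(i) to get $\Nat \subseteq \emptys_{\Nat}$. The only minor deviation is how you show $0 \in \emptys_{\Nat}^{\neq_{\Nat}}$: you derive $\bot_{\Nat}$ from $a \neq_{\Nat} a$ and apply Lemma~\ref{lem: botnat}(iv), whereas the paper splits on the decidability of $\neq_{\Nat}$ and discards the case $0 =_{\Nat} u$ with the local disjunction rule; both stay within $\MIN$.
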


\begin{proof}
We first show that $\big(\C E(\Nat), =_{\C E(\Nat)}, \neq_{\C E(\Nat)}, \emptys_{\Nat}, \Nat\big)$ is a set with a Boolean inequality. The extensionality of $\neq_{\C E(\Nat)}$ follows by our remark right after Definition~\ref{def: subineq}. Condition $(\bool_1)$ follows from the fact that $0_{\Nat} \colon \emptys_{\Nat} \neq_{\C E(\Nat)} \Nat$, as $0_{\Nat} \in \Nat \ \& \ 0_{\Nat} \in \emptys_{\Nat}^{\neq_{\Nat}}$. To show that $0_{\Nat} \in \emptys_{\Nat}^{\neq_{\Nat}}$, let $u \in \Nat$ with $u \neq_{\Nat} u$. By decidability $\neq_{\Nat}$ we have that $0_{\Nat} =_{\Nat} u$ or $0_{\Nat} \neq_{\Nat} u$. If $0_{\Nat} =_{\Nat} u$ was the case, then by the extensionallity of $\neq_{\Nat}$ the inequality $u \neq_{\Nat} u$ implies the inequality $0_{\Nat} \neq_{\Nat} 0_{\Nat}$, which together with the equality $0_{\Nat} =_{\Nat} 0_{\Nat}$ implies $\bot_{\Nat}$. Hence, $0_{\Nat} \neq_{\Nat} u$ is the case. Next we show $(\bool_2)$ i.e., if $A, B \in \C E(\Nat)$, then 
$$A =_{\C E(\Nat)} B \ \& \ A \neq_{\C E(\Nat)} B \To \emptys_{\Nat} =_{\C E(\Nat)}  \Nat.$$
By Definition~\ref{def: subineq} suppose that $n \in \Nat$, such that $n \colon A \neq_{\C E(\Nat)} B$, and let, for example, 
$n \in A$ with $n \in B^{\neq_{\Nat}}$. Hence, $n \neq_{\Nat} n$, which together with $n =_{\Nat} n$ imply $\bot_{\Nat}$. The inequality $\emptys_{\Nat} \subseteq \Nat$ follows trivially, as the conclusion $m =_{\Nat} m$ holds, for every $m \in \Nat$. The inclusion $\Nat \subseteq \emptys_{\Nat}$ follows immediately from Lemma~\ref{lem: botnat}(i).

The proof for the structure $\big(\C E^{\Disj}(\Nat), =_{\C E^{\Disj}(\Nat)}, \neq_{\C E^{\Disj}(\Nat)}, (\emptys_{\Nat}, \Nat), (\Nat, \emptys_{\Nat})\big)$ is similar. The extensionality of $ =_{\C E^{\Disj}(\Nat)}$ follows from Proposition~\ref{prp: ext2}(i). As $\Nat$ is inhabited by $0_{\Nat}$, the proof of $0_{\Nat} \colon (\emptys_{\Nat}, \Nat) \neq_{\C E^{\Disj}(\Nat)} (\Nat, \emptys_{\Nat})$ follows immediately by Definition~\ref{def: csineq}. Next we show $(\bool_2)$ i.e., if $\B A, \B B \in \C E^{\Disj}(\Nat)$, then 
$$\B A =_{\C E^{\Disj}(\Nat)} \B B \ \& \ \B A \neq_{\C E^{\Disj}(\Nat)} \B B \To (\emptys_{\Nat}, \Nat)=_{\C E^{\Disj}(\Nat)}  (\Nat, \emptys_{\Nat}).$$
By definition, the equality $(\emptys_{\Nat}, \Nat)=_{\C E^{\Disj}(\Nat)}  (\Nat, \emptys_{\Nat})$ is equivalent to the equality $\emptys_{\Nat} =_{\C E(\Nat)}  \Nat$. By Remark~\ref{rem: equivs}(ii) suppose that $n \in \Nat$, such that $n \colon \B A \neq_{\C E^{\Disj}(\Nat)} \B B$. The two first cases are treated as in the proof of $(\bool_2)$ for the structure $\big(\C E(\Nat), =_{\C E(\Nat)}, \neq_{\C E(\Nat)}, \emptys_{\Nat}, \Nat\big)$. The last case implies that $n \in A^1 \cup A^0$, such that $\chi_{\B A}(n) =_{\D 2} \chi_{\B B}(n)$ and $\chi_{\B A}(n) \neq_{\D 2} \chi_{\B B}(n)$. Hence, we get $\bot_{\D 2}$ i.e., $\bot_{\Nat}$. To get the required equality $\emptys_{\Nat} =_{\C E(\Nat)}  \Nat$ we work exactly as above for the structure $\big(\C E(\Nat), =_{\C E(\Nat)}, \neq_{\C E(\Nat)}, \emptys_{\Nat}, \Nat\big)$. 
\end{proof}

It is easy to see that the inequalities $\neq_{\C E(\Nat)}$ and $ \neq_{\C E^{\Disj}(\Nat)}$ satisfy the completeness property for inhabited subsets and for inhabited complemented subsets, respectively, only. We provide full proofs and a systematic study of sets with a Boolean inequality in~\cite{Pe24c}. The following proposition can also be shown in a straightforward manner.

\begin{proposition}\label{prp: NtoR} The defined structures 
$$\textbf{Z} := \big(\D Z, =_{\D Z}, \neq_{\D Z}, 0_{\D Z}, 1_{\D Z}\big),$$
$$\textbf{Q} := \big(\D Q, =_{\D Q}, \neq_{\D Q}, 0_{\D Q}, 1_{\D Q}\big),$$
$$\textbf{N} \to \textbf{Q} := \big(\D F(\Nat, \D Q), =_{\D F(\Nat, \D Q)}, \neq_{\D F(\Nat, \D Q)}, 0_{\D F(\Nat, \D Q)}, 1_{\D F(\Nat, \D Q)}\big),$$
where $0_{\D F(\Nat, \D Q)}, 1_{\D F(\Nat, \D Q)}$ are the constant functions $0_{\D Q}$ and $1_{\D Q}$, respectively, and
$$\textbf{R} := \big(\Real, =_{\Real}, \neq_{\Real}, 0_{\Real}, 1_{\Real}\big),$$
are sets with a complete Boolean inequality, such that
$$\bot_{\Nat} \TOT \bot_{\D Z} \TOT \bot_{\D Q} \TOT \bot_{\D F(\Nat, \D Q)} \TOT \bot_{\Real} \ \ \ \& \ \ \ \top_{\Nat} \TOT \top_{\D Z} \TOT \top_{\D Q} \TOT \top_{\D F(\Nat, \D Q)} \TOT \top_{\Real}.$$
\end{proposition}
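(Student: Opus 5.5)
The plan is to prove Proposition~\ref{prp: NtoR} by transfer along the standard chain of constructions $\Nat \to \D Z \to \D Q \to \D F(\Nat, \D Q) \to \Real$. Each structure's equality and inequality are defined from the previous one, so $(\bool_1)$, $(\bool_2)$, completeness, and the equivalences of the local bottoms and tops can be pushed along the chain. For each structure I check four things in order. First, extensionality of $\neq$, which is standard (for $\Real$, $\neq_{\Real}$ is an apartness relation, hence extensional). Second, $(\bool_1)$ and the equivalence of $\top$'s. Third, the equivalence $\bot_Y \TOT \bot_{\Nat}$. Fourth, $(\bool_2)$ and completeness; both then reduce to the corresponding facts for $\Nat$, namely $(\bool_2)$ for $\textbf{N}$ and Lemma~\ref{lem: botnat}.

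For $\D Z$ I use pairs of naturals with $(a,b) =_{\D Z} (c,d) :\TOT a+d =_{\Nat} b+c$ and $\neq_{\D Z}$ given by $a+d \neq_{\Nat} b+c$. Then $0_{\D Z} = (0,0)$ and $1_{\D Z} = (1,0)$, so $\bot_{\D Z}$ and $\top_{\D Z}$ literally unfold to $0 =_{\Nat} 1$ and $0 \neq_{\Nat} 1$. Given $x =_{\D Z} y$ and $x \neq_{\D Z} y$, the same natural-number pair is both equal and unequal, so $(\bool_2)$ for $\textbf{N}$ yields $\bot_{\Nat} = \bot_{\D Z}$. Completeness follows from Lemma~\ref{lem: botnat}(iv) applied to the defining naturals, and all of this is within $\MIN$. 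The same template works for $\D Q$, using fractions $p/q$ with cross-multiplication in $\D Z$. The check $\bot_{\D Q} \TOT \bot_{\D Z}$ is immediate, since $0/1 =_{\D Q} 1/1$ unfolds to $0 =_{\D Z} 1$.

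For $\D F(\Nat, \D Q)$, $\bot$ says the constant functions $0$ and $1$ are equal. Evaluating at $0_{\Nat}$ gives $\bot_{\D Q}$. Conversely, $\bot_{\D Q}$ gives $0_{\D Q} =_{\D Q} 1_{\D Q}$ pointwise, hence $\bot_{\D F}$. For $\top$, the inequality is witnessed by $n = 0$, and conversely any witness gives $0 \neq_{\D Q} 1$. For $(\bool_2)$, if $f =_{\D F} g$ and $f(n) \neq_{\D Q} g(n)$, then $(\bool_2)$ for $\D Q$ gives $\bot_{\D Q}$, hence $\bot_{\D F}$. For completeness, $\bot_{\D F}$ gives $\bot_{\D Q}$, so by completeness of $\D Q$ we get $f(0) \neq_{\D Q} f(0)$, which is a witness for $f \neq_{\D F} f$.

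The main obstacle is $\Real$. Here $x =_{\Real} y$ means $|x_n - y_n| \leq 2/n$ for all $n$, and $x \neq_{\Real} y$ means $\exists_n\, |x_n - y_n| > 2/n$. These are order conditions on $\D Q$, not equalities, so the transfer is no longer definitional. Combining the two statements for the same $n$ gives a rational inequality $2/n < q \leq 2/n$. To turn this into $0 =_{\D Q} 1$ within $\MIN$, I would use decidability of $<$ and $\leq$ on $\D Q$ together with a lemma, proved once over $\Nat$ by the disjunction rule of Definition~\ref{def: Booleineq}, that $a < b \ \& \ b \leq a \To \bot_{\Nat}$. For $\bot_{\Real} \To \bot_{\Nat}$ I evaluate at index $n = 1$: $|0 - 1| \leq 2$ carries no information, so I must use a large index instead. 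Taking $n = 3$ gives $1 \leq 2/3$, which yields $\bot$ via the lemma. The converse $\bot_{\Nat} \To \bot_{\Real}$ is handled by Lemma~\ref{lem: botnat}, since every rational comparison then holds. Completeness at $\Real$ uses $\bot_{\Real} \To \bot_{\D Q}$ and then produces the witness $n = 1$, with $|x_1 - x_1| > 2$ obtained from the collapse of $\D Q$. This order-theoretic bookkeeping, where $\EFQ$ must be replaced by explicit $\bot_{\Nat}$ reasoning, is where I expect the most care to be needed.
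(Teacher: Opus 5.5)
Your proof is correct. The paper gives no argument for this proposition: it calls it straightforward and defers the details elsewhere. The only indication of the intended route is Remark~\ref{rem: ontheproof}, and it differs from yours on completeness.

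There, completeness is not obtained by collapsing each structure back to $\Nat$ via Lemma~\ref{lem: botnat}, as you do. It comes from the ring structure. If $0_R =_R 1_R$, then $x =_R 1_R \cdot x =_R 0_R \cdot x =_R 0_R$, so $x =_R 0_R$ and $x =_R 1_R$. Extensionality of $\neq_R$ applied to $(\bool_1)$ then gives $x \neq_R x$, all within $\MIN$.

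This handles $\D Z$, $\D Q$, $\D F(\Nat,\D Q)$ (with pointwise operations) and $\Real$ uniformly. In particular, it avoids your order-theoretic witness $|x_1 - x_1| > 2$ for $\Real$. On the other hand, it says nothing about the chain of $\bot$-equivalences, and there your definitional unfolding along $\Nat \to \D Z \to \D Q \to \D F(\Nat,\D Q) \to \Real$ is exactly what is needed. Your order lemma is genuinely required in two places: the index-$3$ argument for $\bot_{\Real} \To \bot_{\D Q}$, and $(\bool_2)$ for $\Real$. The first is in fact the special case $x := 0_{\Real}$, $y := 1_{\Real}$ of the second.

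Two minor points.
\begin{itemize}
\item The $\top$-equivalences need no unfolding. $\top_X$ is literally $(\bool_1)$ for $X$, so once $(\bool_1)$ is checked for each structure, all five statements are provable outright.
\item Your lemma $a < b \ \& \ b \leq a \To \bot_{\Nat}$ is better proved directly than via the disjunction rule of Definition~\ref{def: Booleineq}. From $a + S(j) =_{\Nat} b$ and $b + k =_{\Nat} a$ you get $S(j+k) =_{\Nat} 0$ by cancellation. Applying the recursively defined $g$ with $g(0) := 1$ and $g(S(m)) := 0$ then gives $0 =_{\Nat} 1$. This matters because the disjunction rule with $P := \bot_{\Nat}$ already yields $\bot_{\Nat} \To Q$ for every $Q$, which would undercut your claim that everything is done in $\MIN$.
\end{itemize}
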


This partially justifies the use of the global bottom $\bot_{\Nat}$ in the formulation of weak negation within $\BISH$. A universal use of $\bot_{\Nat}$ cannot be fully justified since there are set-constructions that do not preserve bottom. For example, if $\B X, \B Y$ are sets with a Boolean inequality, the product structure $(X \times Y, =_{X \times Y}, \neq_{X \times Y}, 0_{X \times Y}, 1_{X \times Y})$, where $0_{X \times Y}, := (0_X, 0_Y)$ and $ 1_{X \times Y} := (1_X, 1_Y)$ is not, in general, a set with a Boolean inequality, as $\bot_{X \times Y} \TOT \bot_X \ \& \ \bot_Y$, and $\bot_X$ alone, or $\bot_Y$ alone, does not imply $\bot_{X \times Y}$. Of course, the product $X \times X$ has the structure of a set with a Boolean inequality.

\begin{definition}\label{def: boolcs}
If $\C X := (X, =_X, \neq_X, 0_X, 1_X)$ is a set with a Boolean inequality, we call $\B A := (A^1, A^0) \in \C E^{\Disj}(X)$ a Boolean complemented subset, if we can show within $\MIN$ that
$$\bot_X \To P^1(x) \ \ \& \ \ \bot_X \To P^0(x),$$
where $P^1(x)$ and $P^0(x)$ are the extensional properties defining $A^1$ and $A^0$, respectively.
\end{definition}

\begin{proposition}\label{prp: bcs1}
Let $\C X := (X, =_X, \neq_X, 0_X, 1_X)$ be a set with a Boolean inequality and $\B A, \B B \in \C E^{\Disj}(X)$.\\[1mm]
\normalfont (i)
\itshape The relation $\neq_X$ is complete if and only if $(X, \emptys_X)$ is Boolean.\\[1mm]
\normalfont (ii)
\itshape If $\B A$ is Boolean, then $- \B A$ is Boolean.\\[1mm]
\normalfont (iii)
\itshape If $\B A$ and $\B B$ are Boolean, then $\B A \vee \B B, \B A \wedge \B B$ and $\B A - \B B$ are Boolean.\\[1mm]
\normalfont (iv)
\itshape If $\B A$ is Boolean, then $A^1 \cap A^0 = \emptys_X$ $($within $\MIN)$.\\[1mm]
\normalfont (v)
\itshape If $\neq_X$ is complete, then the totality of Boolean complemented subsets is a swap algebra of type $(\tii)$.
\end{proposition}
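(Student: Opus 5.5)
The plan is to work entirely within $\MIN$ and to unfold the defining properties of the sets involved. Throughout, the only way to reach $\bot_X$ will be through $(\bool_2)$, namely from $x =_X x$ together with $x \neq_X x$. For (i), assume first that $\neq_X$ is complete. Then $\bot_X \To x =_X x$ holds trivially. Completeness gives $\bot_X \To x \neq_X x$, which is exactly the defining property of $\emptys_X$, so $(X, \emptys_X)$ is Boolean. Conversely, if $(X, \emptys_X)$ is Boolean, then the clause $\bot_X \To P^0(x)$ with $P^0(x) := x \neq_X x$ is literally the completeness condition.

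Case (ii) is immediate, since $-\B A = (A^0, A^1)$ only swaps the two defining properties. For (iii), the components of $\B A \vee \B B$ and $\B A \wedge \B B$ are built from $P^1_A, P^0_A, P^1_B, P^0_B$ using $\wedge$ and $\vee$ only. Given $\bot_X$, each of these four properties holds by hypothesis, so every conjunction holds. For the unions we choose one disjunct, e.g.\ $A^1 \cap B^1$. Hence $\bot_X$ implies each component, and no $\EFQ$ is used. The case $\B A - \B B = \B A \wedge (-\B B)$ then follows from (ii) and the case of $\wedge$. For (iv), the inclusion $A^1 \cap A^0 \subseteq \emptys_X$ is Proposition~\ref{prp: intcomp1}(i), whose proof is minimal. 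For the converse, take $x$ with $x \neq_X x$. Together with $x =_X x$, $(\bool_2)$ yields $\bot_X$, and Booleanness of $\B A$ then gives $P^1(x)$ and $P^0(x)$. This replaces the use of $\EFQ$ in Proposition~\ref{prp: intcomp1}(ii).

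For (v), I will take the totality of Boolean complemented subsets with the operations of Definition~\ref{def: opers}, with $0 := (\emptys_X, X)$, $1 := (X, \emptys_X)$, and $0_{\B A}, 1_{\B A}$ as in Definition~\ref{def: zeros}. I will then check closure and the axioms. Closure under $\vee, \wedge, -$ is (ii)--(iii). That $1$ and $0$ are Boolean is (i) combined with (ii), and here completeness is used. For $1_{\B A} = (\dom(\B A), \emptys_X)$, Booleanness follows because $\bot_X$ gives $P^1(x)$, hence $x \in \dom(\B A)$, and by completeness $x \in \emptys_X$; by (ii) $0_{\B A}$ is also Boolean. The inequality is inherited from $\C E^{\Disj}(X)$ and is extensional by Proposition~\ref{prp: ext2}(i). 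For $(\swapa_5)$ we use $0_X$ as witness, exactly as after Definition~\ref{def: swapalgebra}. The remaining equational axioms $(\swapa_1)$--$(\swapa_{10})$ and $(\swapa_{\tii})$ are the content of Proposition~\ref{prp: isswapa1} and of the fact that $\C B^e_2(X)$ is a swap algebra of type $(\tii)$. However, those proofs were in $\INT$, because of repeated inclusions $\emptys_X \subseteq C$.

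The main obstacle is therefore to re-run those verifications in $\MIN$. My strategy is to isolate the single intuitionistic step: every time a proof needs $\emptys_X \subseteq C$ with $C$ a component of some Boolean complemented subset, I will replace $\EFQ$ by the argument of (iv). That is, from $x \neq_X x$ obtain $\bot_X$ via $(\bool_2)$, and then use Booleanness of the relevant complemented subset, which is available by closure, to conclude $x \in C$. Since every complemented subset appearing in the axioms, such as $\B A \vee (-\B A)$, $0_{\B A} \vee \B A$ or $1_{\B A} \wedge \B B$, is built from Boolean ones, this replacement is always possible. The remaining inclusions are purely positive Boolean-lattice manipulations of intersections and unions, and these are valid in $\MIN$.
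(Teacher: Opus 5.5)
Your proposal is correct and follows the paper's route: (i)--(iii) by unfolding the defining properties, (iv) by deriving $\bot_X$ from $x \neq_X x$ and $x =_X x$ and then using that $\B A$ is Boolean, and (v) by combining (i)--(iv) to replace every use of $\EFQ$ in the swap-algebra verification. In (iv) you rightly cite $(\bool_2)$, where the paper's text says $(\bool_1)$, and your (v) is a more explicit version of the paper's one-line ``follows immediately from (i)--(iv)''; for instance, you also check that $1_{\B A}$ and $0_{\B A}$ are Boolean, which the paper leaves implicit.
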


\begin{proof}
(i)-(iii) are straightforward to show.\\
(iv) By Proposition~\ref{prp: intcomp1} it suffices to show within $\MIN$ that $\emptys_X \subseteq A^1 \cap A^0$. If $x \in \emptys_X$, then $x \neq_X x$ and by $(\bool_1)$ we get $\bot_X$. By the definition of a Boolean complemented subset we get immediately $P^1(x) \wedge P^0(x)$ i.e., $x \in A^1 \cap A^0$.\\
(v) It follows immediately from (i)-(iv).
\end{proof}

\section{The representation theorem}
\label{sec: repr}

\begin{theorem}[Stone representation theorem for separated swap algebras of type $(\tii)$ $(\INT)$]\label{thm: stone}
If $A$ is a separated swap algebra of type $(\tii)$, then the assignment routine 
$$\pmb{\Stone} \colon A \sto \C E^{\Disj}(\widehat{\B A}), \ \ \ \ \ a \mapsto \pmb{\Stone}(a),$$
$$\pmb {\Stone}(a) := \big(\Stone^1(a), \Stone^0(a)\big),$$
$$\Stone^1(a) := \{\B f \in \widehat{\B A} \mid a \in \dom(\B f) \wedge f(a) =_{\two} 1\},$$
$$\Stone^0(a) := \{\B g \in \widehat{\B A} \mid a \in \dom(\B g) \wedge g(a) =_{\two} 0\},$$
is a total swap embedding of $A$ into the $($separated$)$ swap algebra $\C E^{\Disj}(\widehat{A})$ of type $(\tii)$. 
\end{theorem}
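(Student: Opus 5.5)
The proof splits into three parts: well-definedness of $\pmb{\Stone}$ as a function into $\C E^{\Disj}(\widehat{\B A})$, the homomorphism properties, and the embedding property. The embedding property is exactly separatedness.

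First, $\Stone^1(a)$ and $\Stone^0(a)$ are extensional subsets of $\widehat{\B A}$. If $\B f =_{\widehat{\B A}} \B f{'}$, Definition~\ref{def: equalpf} transports $a \in \dom(\B f)$ to $a \in \dom(\B f{'})$ and gives $f(a) =_{\two} f{'}(a)$. For strong disjointness, take $\B f \in \Stone^1(a)$ and $\B g \in \Stone^0(a)$. Then $a \in \dom(\B f) \cap \dom(\B g)$ and $f(a) =_{\two} 1 \neq_{\two} 0 =_{\two} g(a)$, so $a \colon \B f \neq_{\widehat{\B A}} \B g$ by the third disjunct of Definition~\ref{def: equalpf}.

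Next, $\pmb{\Stone}$ is a function. If $a =_A b$, then, since each $\dom(\B f)$ is an extensional subset of $A$ and $f$ is a function, $\Stone^i(a) =_{\C E(\widehat{\B A})} \Stone^i(b)$ for $i = 0, 1$.

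For the embedding, assume $\pmb{\Stone}(a) =_{\C E^{\Disj}(\widehat{\B A})} \pmb{\Stone}(b)$ and let $\B f \in \widehat{\B A}$. If $a \in \dom(\B f)$, then $f(a) =_{\two} 1$ or $f(a) =_{\two} 0$, so $\B f \in \Stone^1(a)$ or $\B f \in \Stone^0(a)$. The assumed equality moves $\B f$ to $\Stone^1(b)$ or $\Stone^0(b)$ respectively, which gives $b \in \dom(\B f)$ and $f(a) =_{\two} f(b)$. By symmetry the three conjuncts of Definition~\ref{def: sep} hold, and separatedness of $A$ yields $a =_A b$. This step is within minimal logic.

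The homomorphism conditions are a pointwise comparison using Definition~\ref{def: opers}; I expect the join to be the main step.
\begin{itemize}
\item \emph{Top.} $\pmb{\Stone}(1) = (\widehat{\B A}, \emptyset)$, since $1 \in \dom(\B f)$ by $(\fii_1)$ and $f(1) =_{\two} 1$ by $(\ho_1)$. It should equal $1_{\widehat{\B A}} = (\widehat{\B A}, \emptys_{\widehat{\B A}})$. Showing $\Stone^0(1) =_{\C E(\widehat{\B A})} \emptys_{\widehat{\B A}}$ needs $\EFQ$ in both directions: an $\B f$ with $f(1) = 0$ gives $0 =_{\two} 1$, hence $\B f \neq \B f$ by $\EFQ$. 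This is why the theorem is stated in $\INT$.
\item \emph{Complement.} $\pmb{\Stone}(-a) = -\pmb{\Stone}(a)$ follows from $(\fii_2)$, $(\ho_2)$ and $-(-a) =_A a$.
\item \emph{Join.} I must show $\pmb{\Stone}(a \vee b) = \pmb{\Stone}(a) \vee \pmb{\Stone}(b)$. The negative part is $\Stone^0(a) \cap \Stone^0(b)$, and $\B f$ lies there exactly when $a, b \in \dom(\B f)$ and $f(a) = f(b) = 0$. By $(\fii_3)$ and $(\ho_3)$ this is equivalent to $a \vee b \in \dom(\B f)$ with $f(a \vee b) = 0$. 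The positive part is $(S^1_a \cap S^1_b) \cup (S^1_a \cap S^0_b) \cup (S^0_a \cap S^1_b)$, where $S^i_a := \Stone^i(a)$. Here the field axiom $(\fii_3)$ is essential: from $a \vee b \in \dom(\B f)$ we get both $a, b \in \dom(\B f)$, so the four cases for the pair $(f(a), f(b))$ are decidable. Exactly the three cases with $f(a \vee b) = 1$ land in the union.
\item \emph{Meet.} This follows from the join and complement via $(\swapa_8)$ and $(\swapa_7)$.
\item \emph{Operations $0_{-}, 1_{-}$.} These are given by $a \wedge (-a)$ and $a \vee (-a)$ by $(\swapa_9)$, and $0 =_A -1$, so they are preserved too.
\end{itemize}
Totality of the embedding means $\dom = A$, which is immediate.

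Finally, $\C E^{\Disj}(\widehat{\B A})$ is a separated swap algebra of type $(\tii)$ by Proposition~\ref{prp: isswapa1} and Proposition~\ref{prp: sep1}. The property $(\swapa_5)$ there needs $\widehat{\B A}$ to be inhabited. Since $0 \neq_A 1$ and $A$ is separated, some character distinguishes $0$ and $1$; alternatively, I would argue that a character exists directly from separatedness applied to $0, 1$.
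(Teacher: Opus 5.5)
Your verification of the properties of $\pmb{\Stone}$ is correct and follows the paper's proof essentially step by step:
\begin{itemize}
\item strong disjointness of $\Stone^1(a)$ and $\Stone^0(a)$, witnessed by $a$ through the third disjunct, and extensionality;
\item the embedding, obtained by feeding the case split $f(a) =_{\two} 1$ or $f(a) =_{\two} 0$ into the separation hypothesis;
\item preservation of $1$, $-$ and $\vee$ via $(\fii_1)$--$(\fii_3)$ and $(\ho_1)$--$(\ho_3)$.
\end{itemize}
Your additional checks for $\wedge$, $0$, $0_{-}$, $1_{-}$ via $(\swapa_6)$--$(\swapa_9)$ are sound and make explicit what the paper leaves implicit.

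One small inaccuracy: the inclusion $\Stone^0(1) \subseteq \emptys_{\widehat{\B A}}$ does not need $\EFQ$. If $g(1) =_{\two} 0$, then together with $g(1) =_{\two} 1$ from $(\ho_1)$, the extensionality of $\neq_{\two}$ gives $g(1) \neq_{\two} g(1)$. So $1 \colon \B g \neq_{\widehat{\B A}} \B g$ holds already in $\MIN$. Only the converse inclusion uses $\EFQ$, which is exactly what Proposition~\ref{prp: min} exploits.

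The genuine gap is your last paragraph. There you obtain $(\swapa_5)$ for $\C E^{\Disj}(\widehat{\B A})$ by extracting a character from $0 \neq_A 1$ and separatedness, and this step is not valid in $\INT$.
\begin{itemize}
\item Every $\B f \in \widehat{\B A}$ has $1 \in \dom(\B f)$ and $0 =_A -1 \in \dom(\B f)$, with $f(1) =_{\two} 1$ and $f(0) =_{\two} 0$.
\item Hence the antecedent of the separation condition for the pair $(0,1)$ is equivalent to $\forall_{\B f \in \widehat{\B A}} \bot$.
\item Separatedness, $0 \neq_A 1$ and $(\Ineq_1)$ therefore give only $\neg \forall_{\B f \in \widehat{\B A}} \bot$, i.e.\ $\neg\neg$-inhabitedness of $\widehat{\B A}$.
\item Turning this into an actual witness $\B f$ is double-negation elimination.
\end{itemize}
Your phrase ``some character distinguishes $0$ and $1$'' adds nothing beyond the existence of a character, since every character does; existence is the whole issue.

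The paper's proof does not attempt this step at all. It takes $\C E^{\Disj}(\widehat{\B A})$ as the target swap algebra and verifies only the properties of $\pmb{\Stone}$. You should drop the claim, or add inhabitedness of $\widehat{\B A}$ as an explicit hypothesis. Nothing in the embedding or homomorphism part of your argument depends on it.
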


\begin{proof}
First we show that $\pmb{\Stone}$ is well-defined i.e., $\Stone^1(a) \Disj \Stone^0(a)$ in $(\widehat{\B A}, =_{\widehat{\B A}}, \neq_{\widehat{\B A}})$. If $f \in \Stone^1(a)$ and
$g \in \Stone^0(a)$, then by Definition~\ref{def: equalpf} we have that $a \colon f \neq_{\widehat{\B A}} g$. To show that $\pmb{\Stone}$ is a function, let $a, b \in A$ with $a =_A b$, and we show that $\Stone^1(a) =_{\C E(\widehat{\B A})} \Stone^1(b)$ and $\Stone^0(a) =_{\C E(\widehat{\B A})} \Stone^0(b)$. Both these equalities follow trivially from the extensionality of the domain of a partial function and the fact that a partial function preserves equalities between the elements of its domain. Next we show that $\pmb{\Stone}$ is an embedding. For that let $a, b \in A$, such that 
$\pmb {\Stone}(a) =_{\C E^{\Disj}(\widehat{\B A})} \pmb {\Stone}(b)$, and we show that $a =_A b$. By the separation hypothesis on $A$ it suffices to show that 
\begin{align*}
	& \forall_{\B f \in \widehat{\B A}}\bigg(\big(a \in \dom(\B f) \To b \in \dom(\B f)\big) \wedge  \\
	& \ \ \ \ \ \ \ \ \ \ \ \ \ \ \ \ \big(b \in \dom(\B f) \To a \in \dom(\B f)\big) \wedge \\
	& \ \ \ \ \ \ \ \ \ \ \ \ \ \ \ \ \big(a \in \dom(\B f) \wedge a \in \dom(\B f) \To f(a) =_{\two} f(b)\big)\bigg).
\end{align*}
Let  $\B f \in \widehat{\B A}$. If $a \in \dom(\B f)$, let for example $f(a) =_{\two} 1$ i.e., $\B f \in \Stone^1(a)$. As $\Stone^1(a) =_{\C E(\widehat{\B A})} \Stone^1(b)$, we get $\B f \in 
\Stone^1(b)$ i.e., $b \in \dom(\B f)$ and $f(b) =_{\two} 1$. If $f(a) =_{\two} 0$, we work similarly. The other two conjuncts follow similarly.
Next we show that $\pmb{\Stone}$ is a swap homomorphism. First we show that $\pmb{\Stone}(1) =_{\C E^{\Disj}(\widehat{\B A})} \big(\widehat{\B A}, {\emptys}_{\widehat{\B A}}\big)$. By conditions $(\fii_1)$ and $(\ho_1)$ in Definition~\ref{def: partialshomo} we have that $\Stone^1(1) =_{\C E(\widehat{\B A})} \widehat{\B A}$. By definition we have that
\begin{align*}
\Stone^0(1)	& := \{\B g \in \widehat{\B A} \mid 1 \in \dom(\B g) \wedge g(1) =_{\two} 0\}\\
& :=  \{\B g \in \widehat{\B A} \mid 1 \in \dom(\B g) \wedge 1 =_{\two} 0\}\\ 
& =_{\C E(\widehat{\B A})}  \{\B g \in \widehat{\B A} \mid 1 =_{\two} 0\}.
\end{align*}
By Definition~\ref{def: extempty} we have that $\emptys_{\widehat{\B A}}  := \{\B g \in \widehat{\B A} \mid \B g \neq_{\widehat{\B A}} \B g\}$ and 
\begin{align*}
	\B g \neq_{\widehat{\B A}} \B g & :\TOT \exists_{a \in A}\bigg[\big(a \in \dom(\B g) \wedge a \in \dom(\B g)^{\neq_A}\big) \vee \\
	& \ \ \ \ \ \ \ \ \ \ \ \ \ \big(a \in \dom(\B g) \wedge g(a) \neq_{\two} g(a)\big)\bigg].
\end{align*}
The inclusion $\Stone^0(1) \subseteq 	\emptys_{\widehat{\B A}}$ is shown within $\MIN$ as follows.
Let $\B g \in \widehat{\B A}$ such that $1 =_{\two} 0 =_{\D 2} g(1)$. We show that $1 \colon g \neq_{\widehat{\B A}} g$ by showing that $1 \in \dom(\B g)$ and $g(1) \neq_{\D 2} g(1)$. By Definition~\ref{def: partialshomo} it suffices to show that $1 =_{\D 2} g(1) \neq_{\D 2} g(1)$. By the extensionality of $\neq_{\D 2}$ we have that $g(1) =_{\D 2} 0 \neq_{\D 2} 1 =_{\D 2} g(1)$.
The inclusion $\emptys_{\widehat{\B A}} \subseteq \Stone^0(1)$ follows from\footnote{This is the only point in the proof of Theorem~\ref{thm: stone} that we rely on $\EFQ$. In Proposition~\ref{prp: min} we show how to avoid $\EFQ$ if we assume that $\neq_A$ is a Boolean inequality.} $\EFQ$. 

By  conditions $(\fii_2)$ and $(\ho_2)$ in Definition~\ref{def: partialshomo} we have that $\Stone^1(-a) = \Stone^0(a)$ and $\Stone^0(-a) = \Stone^1(a)$, hence $\pmb {\Stone}(-a) =_{\C E^{\Disj}(\widehat{\B A})} -\pmb {\Stone}(a)$. By definition 
$\pmb {\Stone}(a \vee b) := \big(\Stone^1(a \vee b), \Stone^0(a \vee b)\big)$, where
$$\Stone^1(a \vee b) := \{\B f \in \widehat{\B A} \mid a \vee b \in \dom(\B f) \wedge f(a \vee b) =_{\two} 1\}.$$
$$\Stone^0(a \vee b) := \{\B g \in \widehat{\B A} \mid a \vee b \in \dom(\B g) \wedge g(a \vee b) =_{\two} 0\},$$
Moreover, we have that
$$\pmb {\Stone}(a) \vee \pmb {\Stone}(b) := \bigg(\big[ \Stone^1(a) \cap  \Stone^1(b)\big] \cup \big[ \Stone^1(a) \cap  \Stone^0(b)\big]$$
$$\ \ \ \ \ \ \ \ \ \ \ \ \ \ \ \ \ \ \ \ \ \ \ \ \ \ \ \ \ \ \ \ \ \ \ \cup \big[ \Stone^0(a) \cap  \Stone^1(b)\big], \big[ \Stone^0a) \cap  \Stone^0(b)\big]\bigg),$$
$$\Stone^1(a) \cap  \Stone^1(b) =_{\C E(\widehat{\B A})} \big\{\B f \in \widehat{\B A} \mid a \in \dom(\B f) \wedge b \in \dom(\B f) \wedge f(a) =_{\two} 1 =_{\two} f(b)\big\},$$
$$\Stone^1(a) \cap  \Stone^0(b) =_{\C E(\widehat{\B A})} \big\{\B f \in \widehat{\B A} \mid a \in \dom(\B f) \wedge b \in \dom(\B f) \wedge f(a) =_{\two} 1 \wedge f(b) =_{\two} 0\big\},$$
$$\Stone^0(a) \cap  \Stone^1(b) =_{\C E(\widehat{\B A})} \big\{\B f \in \widehat{\B A} \mid a \in \dom(\B f) \wedge b \in \dom(\B f) \wedge f(a) =_{\two} 0 \wedge f(b) =_{\two} 1\big\},$$
$$\Stone^0(a) \cap  \Stone^0(b) =_{\C E(\widehat{\B A})} \big\{\B f \in \widehat{\B A} \mid a \in \dom(\B f) \wedge b \in \dom(\B f) \wedge f(a) =_{\two} 0 =_{\two} f(b)\big\}.$$
Next we show the required equalities
$$\Stone^1(a \vee b) =_{\C E(\widehat{\B A})} \big[ \Stone^1(a) \cap  \Stone^1(b)\big] \cup \big[ \Stone^1(a) \cap  \Stone^0(b)\big] \cup 
 \big[ \Stone^0(a) \cap  \Stone^1(b)\big],$$
 and
 $$\Stone^0(a \vee b) =_{\C E(\widehat{\B A})} \Stone^0(a) \cap  \Stone^0(b).$$
 For the first equality let $f \in \widehat{\B A}$, such that $a \vee b \in \dom(\B f) \wedge f(a \vee b) =_{\two} 1$. By conditions $(\fii_3)$ and $(\ho_3)$ in Definition~\ref{def: partialshomo} we have that $a \in \dom(\B f), b \in \dom(\B f)$, and $f(a \vee b) =_{\two} f(a) \vee f(b) =_{\two} 1$, hence $f(a) =_{\two} 1$ or 
 $f(b) =_{\two} 1$ 
 i.e., $f \in \big[\Stone^1(a) \cap  \Stone^1(b)\big] \cup \big[ \Stone^1(a) \cap  \Stone^0(b)\big] \cup \big[ \Stone^0(a) \cap  \Stone^1(b)\big]$. For the converse inclusion we work similarly. For the second equality let $f \in \widehat{\B A}$, such that $a \vee b \in \dom(\B f) \wedge f(a \vee b) =_{\two} 0$. By conditions $(\fii_3)$ and $(\ho_3)$ in Definition~\ref{def: partialshomo} again we have that $a \in \dom(\B f), b \in \dom(\B f)$, and $f(a \vee b) =_{\two} f(a) \vee f(b) =_{\two} 0$, hence $f(a) =_{\two} 0$ and $f(b) =_{\two} 0$ i.e., $f \in \big[\Stone^0(a) \cap  \Stone^0(b)\big]$.  For the converse inclusion we work similarly. 
\end{proof}

The definition of a field of elements in a swap algebra $A$ and of a swap character on $A$ are tailored to the previous proof. Notice that f we had considered a swap algebra of type $(\ti)$, then the equality $\B {\Stone}(a \vee b) = \B {\Stone} (a) \cup \B {\Stone}(b)$ fails i.e., the above proof of the Stone representation works only for swap algebras of type $(\tii)$. The reason is that 
$$\B {\Stone}(a) \cup \B {\Stone}(b) := \bigg(\big[\Stone^1(a) \cup \Stone^1(b), \Stone^0(a) \cap \Stone^0(b)\bigg),$$
and while $\Stone^1(a \vee b) \subseteq \Stone^1(a) \cup \Stone^1(b)$, the converse inclusion does not hold in general, due to the partiality of the swap characters. If $f$ is a swap character on $A$ with $a \in \dom(\B f)$ and $f(a) =_{\two} 1$, we don't have in general that $a \vee b \in \dom(\B f)$ i.e., that $b \in \dom(\B f)$.

\begin{proposition}\label{prp: min}
	If  $\neq_A$ is a Boolean inequality, then the proof of Theorem~\ref{thm: stone} is within $\MIN$.
\end{proposition}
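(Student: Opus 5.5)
The proof of Theorem~\ref{thm: stone} used $\EFQ$ only once (see the footnote there): in the inclusion $\emptys_{\widehat{\B A}} \subseteq \Stone^0(1)$. All other steps are within $\MIN$. These are the disjointness $\Stone^1(a) \Disj \Stone^0(a)$, the function and embedding properties, the inclusion $\Stone^0(1) \subseteq \emptys_{\widehat{\B A}}$, and the equalities for $-$ and $\vee$. Each of them only unfolds definitions, uses the extensionality of $\neq_{\two}$, and applies ordinary case analysis on $\two$. The plan is therefore to recheck these steps and confirm that no hidden $\EFQ$ occurs. Then I would replace the single $\EFQ$-step by a minimal-logic argument that uses the Boolean inequality $\neq_A$ with booleans $0_A := 0$ and $1_A := 1$; this is consistent with $(\swapa_5)$, which gives $(\bool_1)$.

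For the remaining inclusion, let $\B g \in \widehat{\B A}$ with $\B g \neq_{\widehat{\B A}} \B g$. I must show $1 \in \dom(\B g)$ and $g(1) =_{\two} 0$. The first holds by $(\fii_1)$ without any hypothesis. For the second, I unfold the witness $a \in A$ of $\B g \neq_{\widehat{\B A}} \B g$ and split into the three disjuncts of Definition~\ref{def: equalpf}, all of which lie inside the given disjunction.

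\emph{First two disjuncts:} here $a \in \dom(\B g)$ and $a \in \dom(\B g)^{\neq_A}$, so $a \neq_A a$. Together with $a =_A a$, $(\bool_2)$ yields $\bot_A$, i.e.\ $0 =_A 1$.

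\emph{Third disjunct:} here $g(a) \neq_{\two} g(a)$, which directly gives $\bot_{\two}$, i.e.\ $0 =_{\two} 1$.

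In the first case, $0 =_A 1$ means the two elements $0, 1 \in \dom(\B g)$ are equal. Since $\B g$ is a function, $g(0) =_{\two} g(1)$. By $(\ho_1)$ and $(\ho_2)$ together with $(\swapa_6)$ and $(\swapa_7)$, we have $g(1) =_{\two} 1$ and $g(0) =_{\two} 0$. Hence $0 =_{\two} 1$, so again $\bot_{\two}$. So in every case we reach $0 =_{\two} 1$. From $g(1) =_{\two} 1$ and $1 =_{\two} 0$, transitivity of equality gives $g(1) =_{\two} 0$. That is exactly $\B g \in \Stone^0(1)$, derived in $\MIN$ with no use of $\EFQ$.

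The main obstacle I expect is the bookkeeping in the disjunction eliminations. Each branch must end in the same positive conclusion $g(1) =_{\two} 0$, obtained by equational reasoning from a derived equality $0 =_{\two} 1$, and never by an appeal to $\bot \To P$. I would also verify that the earlier inclusion $\Stone^0(1) \subseteq \emptys_{\widehat{\B A}}$, and the converse inclusions ``worked similarly'' for $\vee$, only use the decidability of $=_{\two}$ through genuine case splits. Where a case seems contradictory, for instance $f(a) \vee f(b) =_{\two} 1$ with $f(a) =_{\two} 0 =_{\two} f(b)$, I would handle it by checking the definition of $\vee$ on $\two$ case by case rather than by explosion.
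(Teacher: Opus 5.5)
Your proposal is correct and follows essentially the same route as the paper: isolate the single $\EFQ$-step $\emptys_{\widehat{\B A}} \subseteq \Stone^0(1)$ and split on the witness $a$ of $\B g \neq_{\widehat{\B A}} \B g$. In each branch you derive $0 =_{\two} 1$, either directly from $g(a) \neq_{\two} g(a)$, or from $a \neq_A a$ via $0_A =_A 1_A$, $g(0_A) =_{\two} 0$ and $g(1_A) =_{\two} 1$, and then conclude $g(1) =_{\two} 0$ equationally. Your explicit choice $0_A := 0$, $1_A := 1$ and your appeal to $(\bool_2)$, where the paper's text cites $(\bool_1)$, make precise what the paper leaves implicit.
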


\begin{proof}
It suffices to show that in this case the inclusion $\emptys_{\widehat{\B A}} \subseteq \Stone^0(1)$ is shown within $\MIN$. For that, let $\B g \in \emptys_{\widehat{\B A}} :\TOT \B g \neq_{\B {\widehat{A}}} \B g$ i.e., there is $a \in A$ such that either $a \in \dom(\B g) \ \& \ a \in  \dom(\B g)^{\neq_A}$ or $a \in \dom(\B g) \ \& \ g(a) \neq_{\D 2} g(a)$. In the latter case let $g(a) =_{\D 2} 1$. Hence $g(a) \neq_{\D 2} 1$ i.e., $g(a) =_{\D 2} 0$. Consequently, $1 =_{\D2} 0$. If $g(a) =_{\D 2} 0$, we work similarly. Suppose next that $a \in \dom(\B g) \ \& \ a \in  \dom(\B g)^{\neq_A}$, hence $a \neq_A a$. By condition $(\bool_1)$ we get $(0_A =_A 1_A)$, therefore $0 =_{\D 2} g(0_A) =_{\D 2} g(1_A) =_{\D 2} 1$.
\end{proof}

In the next section we explain why the hypothesis of $A$ being separated in Theorem~\ref{thm: stone} is not a loss of generality from the point of view of the theory of swap characters.

\section{A Stone-\v{C}ech theorem for swap algebras of type  $(\tii)$}
\label{sec: stonecech}

As we mentioned already, classically every Boolean algebra is separated, which amounts to showing the contraposition of the separation of the points of $A$ from its characters $\widehat{\B A}$ i.e., the existence for every non-zero element $a$ of $A$, of a swap character on $A$ that maps $a$ to $1$. The classical proof of this fact employs Zorn's lemma to justify the existence of an appropriate maximal ideal (see~\cite{Ha74}, p.~77). What we show in this section is that the separation property of $A$ can be handled in a different, and much easier way. The main idea is the recognition that we do not need to employ some maximal (or prime) ideal, rather to change the equality of $A$. In topology a similar attitude is followed in the theory of the ring of continuous functions $C(X)$ of a topological space~\cite{GJ60}. Within this theory it is not necessary to work with arbitrary topological spaces, but only with completely regular ones. In a completely regular topological space $(X, \mathcal{T})$ a pair $(x, B)$, where
$B$ is closed and $x \notin B$, is separated by some $f \in C(X, [0, 1])$. The ring of real-valued,
continuous functions $C(X)$ of a completely regular and $T_{1}$-space $X$,
also known as a \textit{Tychonoff space}, separates the points of $X$ i.e.,
$$\forall_{x, x{'} \in X}\big(\forall_{f \in C(X)}(f(x) = f(x{'})) \Rightarrow x = x{'}\big).$$
The sufficiency of the completely regular topological spaces in the theory of $C(X)$
is provided by the Stone-\v{C}ech theorem, according to which, for every topological 
space $X$ there exists a completely regular space $\rho X$ and a continuous mapping $\rho_X \colon X \rightarrow \rho X$ such that the induced function $f \mapsto \rho^{*}_X(f)$, where
$\rho^{*}_X(f) = f \circ \rho_X$, is a ring isomorphism between $C(\rho X)$ and $C(X)$ (see~\cite{GJ60}, p.~41).
\begin{center}
	\begin{tikzpicture}
		
		\node (E) at (0,0) {$X$};
		\node[right=of E] (F) {$\rho X$};
		\node [below=of F] (D) {$\Real$};
		\node[right=of F] (P) {};
		\node[right=of P] (S) {};
		\node[right=of S] (K) {$X$};
		\node [right=of K] (L) {$\rho X$};
		\node [below=of L] (M) {$Y$};
		
		\draw[->] (E)--(F) node [midway,above] {$ \rho_X$};
		\draw[->] (E)--(D) node [midway,left] {$C(X) \ni \rho^{*}_X(f) \ \ $};
		\draw[->] (F)--(D) node [midway,right] {$f \in C(\rho X) $};
		\draw[->] (K)--(M) node [midway,left] {$C(X, Y) \ni h \ \ $};
		\draw[->] (K)--(L) node [midway,above] {$\rho_X$};
		\draw[->] (L)--(M) node [midway,right] {$ \rho h \in C(\rho X, Y)$};

	\end{tikzpicture}
\end{center} 
Consequently, a functor $\rho \colon \Top \to \crTop$ from the category of topological spaces $(\Top, \Cont)$ to its subcategory of completely regular topological spaces $(\crTop, \Cont)$ is defined, which is a reflector, that is for every continuous function $h$ from $X$ to a completely regular space $Y$ there is a unique continuous function $\rho h \colon \rho X \to Y$ such that the above right triangle commutes (see~\cite{He68}, p.~6). In complete analogy, here we show an appropriate Stone-\v{C}ech theorem for swap algebras of type $(\tii)$, hence also for Boolean algebras, which implies that from the point of view of the theory of swap characters $\widehat{\B A}$, our current analogue to $C(X)$, it suffices to work with separated swap (or Boolean) algebras. Although no topological notions are involved in the formulation and the proof of Theorem~\ref{thm: sc}, the structure of our constructive proof of this theorem is very similar to the structure of the classical proof of the topological Stone-\v{C}ech theorem (see~\cite{Wa74}, p.~5--6) . This is the reason why we chose to name it like that. The similarity is even more evident, if one compares the proof of Theorem~\ref{thm: sc} with the constructive proof of Stone-\v{C}ech theorem for Bishop spaces (see~\cite{Pe15a}, p.~307).

\begin{theorem}[Stone-\v{C}ech theorem for swap algebras of type $(\tii)$]\label{thm: sc} 
	If $(A, =_A, \neq_A)$ is a swap algebra of type $(\tii)$, there is a swap algebra of type $(\tii)$
	 $(\sigma A, =_{\sigma A}, \neq_{\sigma A})$ and a total swap homomorphism $\sigma_A \colon A \to 
	\sigma A$, such that the following hold:\\[1mm]
	\normalfont (i)
	\itshape For every $\B f := (\dom(\B f), f) \in \widehat{\B A}$ there is a unique $\B {\sigma f} := (\dom(\B {\sigma f}), \sigma f) \in \widehat{\B {\sigma A}}$, such that the following left triangle commutes\footnote{For simplicity, in the diagrams we write $\sigma f \in \widehat{\B {\sigma A}}$, instead of $\B {\sigma f} \in \widehat{\B {\sigma A}}$.} i.e.,
	$\dom(\B {\sigma f}) := \{\sigma_A(a) \mid a \in \dom(\B f)\}$ and $\sigma f(\sigma_A(a)) =_{\two} f(a)$, for every $a \in \dom(\B f)$.	
	\begin{center}
		\begin{tikzpicture}
			
			\node (E) at (0,0) {$A$};
			\node[right=of E] (F) {$\sigma A$};
			\node [below=of F] (D) {$\two$};
			\node[right=of F] (P) {};
			\node[right=of P] (N) {};
			\node[right=of N] (S) {};
			\node[right=of S] (K) {$A$};
			\node [right=of K] (L) {$\sigma A$};
			\node [below=of L] (M) {$B$};
			
			\draw[->] (E)--(F) node [midway,above] {$ \sigma_A$};
			\draw[-right to] (E) to node [midway,left] {$\widehat{\B A} \ni f \ \ $} (D) ;
			\draw[-left to] (F) to node [midway,right] {$\sigma f \in \widehat{\B {\sigma A}}$} (D);
			\draw[-right to] (K) to node [midway,left] {$\widehat{(\B A,\B B)} \ni h \ \ $} (M);
			\draw[->] (K)--(L) node [midway,above] {$\sigma_A$};
			\draw[-left to] (L) to node [midway,right] {$ \sigma h \in \widehat{(\B {\sigma A}, \B B)}$} (M);

		\end{tikzpicture}
	\end{center} 
	\normalfont (ii)
	\itshape For every $\B h \in \widehat{\B {\sigma A}}$ there is a unique $\B {h{'}} \in \widehat{\B A}$, such that $\B {\sigma h{'}} =_{\widehat{\B {\sigma A}}} \B h$, and hence 
		$\widehat{\B A} =_{\D V_0} \widehat{\B {\sigma A}}$.\\[1mm]	
	\normalfont (iii)
	\itshape The swap algebra $\sigma A$ is separated.\\[1mm]	
	\normalfont (iv)
	\itshape If $(A, =_A, \neq_A, 0_A, 1_A)$ is a set with a Boolean inequality, then $(\sigma A, =_{\sigma A}, \neq_{\sigma A}, 0_{\sigma A}, 1_{\sigma A})$ is a set with a Boolean inequality.\\[1mm]
	\normalfont (v)
	\itshape
	If $(B, =_B, \neq_B)$ is a separated swap algebra of type $(\tii)$, and $h \colon A \pto B$ is a partial swap homomorphism,
	 there is a unique partial swap homomorphism $\sigma h \colon \sigma A \pto B$, such that the above right triangle commutes i.e., 
	 $\dom(\B {\sigma h}) := \{\sigma_A(a) \mid a \in \dom(\B h)\}$ and $\sigma h(\sigma_A(a)) =_{\two} h(a)$, for every $a \in \dom(\B h)$.\\[1mm]
	 	\normalfont (vi)
	 \itshape For every $\B  h \in \widehat{(\B {\sigma A}, \B B)}$ there is a unique $\B h{'} \in \widehat{(\B A, \B B)}$ with $\B {\sigma h{'}} =_{\widehat{(\B {\sigma A}, \B B)}} \B h$, and  $\widehat{(\B A, \B B)} =_{\D V_0} 
	 \widehat{(\B {\sigma A}, \B B)}$.
		\end{theorem}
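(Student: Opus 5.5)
The plan is to keep the underlying totality and operations of $A$ and only change the equality, as announced in the introduction. Define $\sigma A := A$ with the same $\vee, \wedge, 0, 1, -, 0_{-}, 1_{-}$, and set
\begin{align*}
a =_{\sigma A} b & :\TOT \forall_{\B f \in \widehat{\B A}}\Big[\big(a \in \dom(\B f) \TOT b \in \dom(\B f)\big) \wedge \big(a, b \in \dom(\B f) \To f(a) =_{\two} f(b)\big)\Big],\\
a \neq_{\sigma A} b & :\TOT \exists_{\B f \in \widehat{\B A}}\Big[\big(a \in \dom(\B f) \wedge b \in \dom(\B f) \wedge f(a) \neq_{\two} f(b)\big) \vee a \neq_A b\Big].
\end{align*}
The first step is to check that $=_{\sigma A}$ is an equivalence relation; this is immediate. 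Since $\dom(\B f)$ is extensional and $f$ is a function, $a =_A b$ implies $a =_{\sigma A} b$. Hence $\sigma_A := \id_A$ is a function, and it is trivially a total swap homomorphism. It also follows that every operation of $A$ is a function with respect to $=_{\sigma A}$. For $\vee$ this uses $(\fii_3)$ and $(\ho_3)$: if $a =_{\sigma A} a{'}$ and $b =_{\sigma A} b{'}$, then $a \vee b \in \dom(\B f)$ iff $a, b \in \dom(\B f)$ iff $a{'}, b{'} \in \dom(\B f)$, and the values agree. For $-$ one uses $(\fii_2)$ and $(\ho_2)$. For $\wedge$, $0_{-}$, $1_{-}$ one expresses them through $\vee$ and $-$ via $(\swapa_8)$, $(\swapa_9)$ and the fact that a field contains $0_a, 1_a$. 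Since every equation of Definition~\ref{def: swapalgebra} already holds for $=_A$, it holds for $=_{\sigma A}$, and so does $(\swapa_{\tii})$.

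The next step is to check $\neq_{\sigma A}$. $(\Ineq_1)$ holds: if $a =_{\sigma A} b$ and $f(a) \neq_{\two} f(b)$ we get $\bot$, while in the disjunct $a \neq_A b$ I would rather drop that disjunct and use only the character clause. With only that clause, $(\swapa_5)$ is not automatic, since $\widehat{\B A}$ might be empty. I expect the inequality to be the main obstacle. The fallback is to keep the disjunct $a \neq_A b$ but also add, in $=_{\sigma A}$, the conjunct $\neg(a \neq_A b)$ or an analogous condition. The first choice I would try is the pure character inequality together with the extra disjunct, and then verify $(\Ineq_1)$ using that $a =_{\sigma A} b$ and $a \neq_A b$ must yield $\bot$ only in the needed instance $0, 1$. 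Extensionality of $\neq_{\sigma A}$ follows exactly as in Proposition~\ref{prp: ext1}.

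For (i), given $\B f \in \widehat{\B A}$, take the same domain and the same assignment, now read on $\sigma A$. It is a function by the definition of $=_{\sigma A}$, and it is strongly extensional since $f(a) \neq_{\two} f(b)$ is literally a witness of $a \neq_{\sigma A} b$. The field and character axioms transfer verbatim, and uniqueness is forced because $\sigma_A$ is the identity on elements. For (ii), given $\B h \in \widehat{\B{\sigma A}}$, define $\B h{'} := \B h$ on $A$. It is a function because $=_A$ implies $=_{\sigma A}$, and strongly extensional because $\neq_{\sigma A}$ is at least as strong as the character clause, which implies $\neq_A$ via strong extensionality of characters. The two assignments are then mutually inverse, which gives $\widehat{\B A} =_{\D V_0} \widehat{\B{\sigma A}}$.

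For (iii), suppose $a, b$ agree on all $\B h \in \widehat{\B{\sigma A}}$. By (i) every $\B f \in \widehat{\B A}$ is of the form $\B{\sigma f}$, so $a, b$ agree on all of $\widehat{\B A}$, which is exactly $a =_{\sigma A} b$. For (iv), take $0_{\sigma A} := 0_A$ and $1_{\sigma A} := 1_A$; $(\bool_1)$ is inherited, and for $(\bool_2)$ one obtains $\bot_{\two}$ from $f(a) =_{\two} f(b)$ and $f(a) \neq_{\two} f(b)$, and then, within $\MIN$, $0 =_{\sigma A} 1$ because every $\B f$ has $f(0) =_{\two} 0$ and $f(1) =_{\two} 1$. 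For (v) and (vi), define $\sigma h := h$ on the same domain. Well-definedness with respect to $=_{\sigma A}$ uses that $B$ is separated: for any $\B g \in \widehat{\B B}$, the composite $\B g \circ \B h \in \widehat{\B A}$ by Proposition~\ref{prp: compswap}. So $a =_{\sigma A} a{'}$ gives agreement of $h(a), h(a{'})$ under all characters of $B$, hence $h(a) =_B h(a{'})$, and the domain condition is obtained in the same way. Uniqueness and the bijection in (vi) then follow as in (i) and (ii).
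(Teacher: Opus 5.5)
Your construction ($\sigma A := A$ with the character-induced equality, $\sigma_A := \id_A$, $\sigma f := f$, $h{'} := h$) and your arguments for (i)--(iii) match the paper's. The gap is that the inequality on $\sigma A$ is left unsettled, and the version you commit to fails.

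With the disjunct $a \neq_A b$, $(\Ineq_1)$ breaks. The relation $=_{\sigma A}$ is designed to identify elements that $A$ may keep apart, so any $a \neq_A b$ that no character distinguishes satisfies both $a =_{\sigma A} b$ and $a \neq_{\sigma A} b$ (in the example below, $s \neq_T t$ and $s =_{\sigma T} t$). Checking the instance $0, 1$ does not prove a universally quantified axiom, and the same pairs violate $(\bool_2)$ in (iv). The fallback of adding $\neg(a \neq_A b)$ to $=_{\sigma A}$ needs symmetry and cotransitivity of $\neq_A$ just to be an equivalence relation. It also destroys (iii), since agreement on all characters does not yield $\neg(a \neq_A b)$.

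The paper transports Definition~\ref{def: equalpf}: $a \neq_{\sigma A} b$ iff some $\B f \in \widehat{\B A}$ satisfies $a \in \dom(\B f) \wedge b \in \dom(\B f)^{\neq_A}$, or the symmetric clause, or $a, b \in \dom(\B f) \wedge f(a) \neq_{\two} f(b)$. Then $(\Ineq_1)$ is immediate. Given $a =_{\sigma A} b$, the first clause puts $b$ into $\dom(\B f)$ and yields $b \neq_A b$, and the third contradicts $f(a) =_{\two} f(b)$. Extensionality is Proposition~\ref{prp: ext1}. The domain clauses are needed: with the value clause alone, an element lying in no character's domain is apart from nothing. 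Then $\sigma h$ in (v) can fail to be strongly extensional; below, $0$ and $s$ would not be apart in $\sigma T$, though a homomorphism defined on all of $T$ maps them to apart elements.

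Your worry about $(\swapa_5)$ is justified, and the paper does not resolve it either, since $(*)$ only transfers equations. Every character has $0, 1$ in its domain with values $0, 1$, so $0 =_{\sigma A} 1$ holds exactly when $\widehat{\B A}$ has no element. Hence, under any inequality satisfying $(\Ineq_1)$, both $(\swapa_5)$ for $\sigma A$ and $(\bool_1)$ in (iv) force $\widehat{\B A}$ to be non-empty. That does not follow from the hypotheses (already for Boolean algebras it is a form of the prime ideal theorem). It must be assumed, not obtained by enlarging $\neq_{\sigma A}$.

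A second step fails in (v): the domain condition is \emph{not} obtained in the same way as for the values. From $a \in \dom(\B h)$ and $a =_{\sigma A} a{'}$ you learn about $a{'}$ only through those $\B g \circ \B h$ with $h(a) \in \dom(\B g)$, and $h(a)$ may lie in no character's domain. Here is a counterexample.
Let $T := \{0, 1, s, t\}$ with discrete equality and inequality, $\vee$ the maximum for $0 < 1 < s < t$, and $-0 := 1$, $-1 := 0$, $-s := s$, $-t := t$. Set $a \wedge b := -(-a \vee -b)$, $1_a := a \vee (-a)$ and $0_a := a \wedge (-a)$; a routine check shows that $T$ is a swap algebra of type $(\tii)$. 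No character has $s$ in its domain (it would need $f(s) =_{\two} \sim f(s)$), hence none has $t = t \vee s$. So the only character lives on $\{0, 1\}$, and $s =_{\sigma T} t$.
The subalgebra $B := \{0, 1, s\}$ is separated (its only character again lives on $\{0,1\}$). The identity on the field $\{0, 1, s\}$ of $T$ is a partial swap homomorphism $\B h \colon T \pto B$ whose domain is not $=_{\sigma T}$-closed. So $(\dom(\B h), h)$ is not a partial function on $\sigma T$. Saturating its domain forces $\sigma h(t) := s$. Then $\B{\sigma h}$ coincides with $\B{\sigma h{'}}$ for the total homomorphism $\B h{'}$ with $h{'}(t) := s$, although $\B h$ and $\B h{'}$ have different domains, so the uniqueness in (vi) fails.
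The analogy with (i) and (ii) breaks because domains of characters are $=_{\sigma A}$-closed by the very definition of $=_{\sigma A}$, whereas domains of homomorphisms into $B$ need not be. The paper's proof of (v) takes $\dom(\B h)$ as the domain of $\sigma h$ without addressing this, so the point is not settled there either.
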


\begin{proof}
If $\sigma A := A$, it is straightforward to show that the following relation is an equivalence relation 
\begin{align*}
	a =_{\sigma A} b &:\TOT \forall_{\B f \in \widehat{\B A}}\big((a \in \dom(\B f) \TOT b \in \dom(\B f)) \wedge\\
	& \ \ \ \ \ \ \ \ \ \ \ \ \  (a \in \dom(\B f) \wedge b \in \dom(\B f) \To f(a) =_{\two} f(b))\big).
\end{align*}
As an inequality on $\sigma A$ we consider the following relation
\begin{align*}
	a \neq_{\sigma A} b &:\TOT \exists_{\B f \in \widehat{\B A}}\bigg(\big(a \in \dom(\B f) \wedge b \in \dom(\B f)^{\neq_{A}}\big) \vee\\
		& \ \ \ \ \ \ \ \ \ \ \ \ \ \big(b \in \dom(\B f) \wedge a \in \dom(\B f)^{\neq_{A}}\big) \vee\\
	& \ \ \ \ \ \ \ \ \ \ \ \ \  \big(a \in \dom(\B f) \wedge b \in \dom(\B f) \wedge f(a) \neq_{\two} f(b)\big)\bigg).
\end{align*}
The extensionality of $a \neq_{\sigma A} b$ follows from Proposition~\ref{prp: ineqext}, hence 
$(\sigma A, =_{\sigma A}, \neq_{\sigma A}) \in \SetExtIneq$. 
Next we show that $\sigma A$ is a swap algebra of type $(\tii)$. All operations on $\sigma A$ are defined as in $A$. For example, $\vee \colon \sigma A \times \sigma A \to \sigma A$ is defined by the rule $(a, b) \mapsto a \vee b$. To show that this is a function, let $a =_{\sigma A} a{'},$ $b =_{\sigma A} b{'}$, and we show that $a \vee b =_{\sigma A} a{'} \vee b{'}$. 
If $\B f \in \widehat{\B A}$ with $a \vee b \in \dom(\B f)$, then by $(\fii_3)$ we have that
 $a \in \dom(\B f)$ and $b \in \dom(\B f)$. By the equalities $a =_{\sigma A} a{'}$ and $b =_{\sigma A} b{'}$ we get  $a{'} \in \dom(\B f)$ and $b{'} \in \dom(\B f)$,
 hence $a{'} \vee b{'} \in \dom(\B f)$. For the converse implication we work similarly. If $a \vee b \in \dom(\B f)$ and $a{'} \vee b{'} \in \dom(\B f)$, then 
 $a, a{'} \in \dom(\B f)$ and $b, b{'} \in \dom(\B f)$, and by the supposed equalities we have that $f(a) =_{\two} f(a{'})$ and $f(b) =_{\two} f(b{'})$. Consequently,
 $f(a \vee b) =_{\two} f(a) \vee f(b) =_{\two} f(a{'}) \vee f(b{'}) =_{\two} f(a{'} \vee b{'})$. For the remaining swap operations we work similarly. All axioms of a swap algebra of type $(\tii)$ hold for $\sigma A$ from the hypothesis that $A$ is a swap algebra of type $(\tii)$, and the obvious implication
 $$(*) \ \ \ \ \ \ \ \ \ \ \ \ \ \ \ \ \ \ \ \  \ \ \ \ \ \ \ \ \ \ \ \  \   \ \ \ \ \ \ \ \ \ \ \ \  \   \ \ \ \ \ \ \ \ \ \ \ \  \ \ \ \ \ \ \ \ \   \ a =_A b \To a =_{\sigma A} b, \ \ \ \ \ \ \ \ \ \ \ \  \   \ \ \ \ \ \ \ \ \ \ \ \  \ 
  \ \ \ \ \ \ \ \ \ \ \ \  \   \ \ \ \ \ \ \ \ \ \ \ \  \   \ \ \ \ \ \ \ \ \ \ \ \  \   \ \ \ \ \ \ \ \ \ \ \ \  \   \ \ \ \ \ \ \ \ \ \ \ \  \ $$
 for every $a, b \in A$. The fact that $\sigma$ is a swap-homomorphism follows trivially by its definition and the definition of the swap operations on $\sigma A$.  \\
 (i) If $\B f := (\dom(\B f), f) \in \widehat{\B A}$, let the assignment routine $\sigma f \colon \dom(f) \sto \two$, defined by the rule $\sigma f(a) := f(a)$, for every $a \in \dom(\B f)$. First we show that $\sigma f$ is a partial function from $\sigma A$ to $\two$. For that, it suffices to show that it preserves the corresponding equalities. Let $a, b \in \dom(\B f)$, such that $a =_{\sigma A} b$. By definition of the equality $=_{\sigma A}$ this implies that $f(a) =_{\two} f(b)$ i.e., $\sigma f(a) =_{\two} \sigma f(b)$. Next we show that $\B {\sigma f} := (\dom(\B f), \sigma f) \in \widehat{\B {\sigma A}}$. The defining conditions of a swap character follow immediately from the definition of $\sigma f$ and of the swap operations in $\sigma A$. Clearly, $\sigma f$ makes the above left triangle commutative. The uniqueness of $\sigma f$ in that commutativity is immediate to show.\\
 (ii) Let $h{'} \colon \dom(\B h) \sto \two$ be defined by the rule $h{'}(a) := h(a)$, for every $a \in \dom(\B h)$. By implication $(*)$ above we have that $h{'}$ is a function, and by the definition of the swap operations on $\sigma A$ we have that $h{'} \in \widehat{\B A}$. The equality $\B {\sigma h{'}} =_{\widehat{\B {\sigma A}}} \B h$ follows from case (i), and the uniqueness of $h{'}$ with respect to this property is imediate to show.\\
 (iii)  To prove that $\sigma A$ is separated, let 
 $$\forall_{\B h \in \widehat{\B {\sigma A}}}\bigg((a \in \dom(\B h) \TOT b \in \dom(\B h)) \wedge  \big(a \in \dom(\B h) \wedge b \in \dom(\B h) \To h(a) =_{\two} h(b)\big)\bigg),$$
 which implies, actually by (ii) it is equivalent to,
 $$\forall_{\B f \in \widehat{\B A}}\bigg((a \in \dom(\sigma \B f) \TOT b \in \dom(\B {\sigma f})) \wedge  \big(a \in \dom(\B {\sigma f}) \wedge b \in \dom(\B {\sigma f}) \To \sigma f(a) =_{\two} \sigma f(b)\big)\bigg),$$
 which in turn by definition of $\sigma f$ is equivalent to the equality $a =_{\sigma A} b$.\\
 (iv) Let $0_{\sigma A} := 0_A$ and $1_{\sigma A} := 1_A$. By implication $(*)$ we have that $(\bool_1)$ for $A$ implies $(\bool_1)$ for $\sigma A$. Next we show that $(\bool_2)$ for $A$ implies $(\bool_2)$ for $\sigma A$. If $a, b \in A$ with $a =_{\sigma A} b$ and $a \neq_{\sigma A} b$, we show that $0_A =_{\sigma A} 1_A$, where by definition we have that
 \begin{align*}
 \bot_{\sigma A} & := 0_A =_{\sigma A} 1_A\\
 & :\TOT \forall_{\B f \in \B {\widehat{A}}}\bigg(\big[0_A \in \dom(\B f) \TOT 1_A \in \dom(\B f)\big] \ \& \ \big[0_A \in \dom(\B f) \ \& \ 1_A \in \dom(\B f) \To f(0_A) =_{\two} f(1_A)\big]\bigg)\\
 & \TOT \forall_{\B f \in \B {\widehat{A}}}\big(f(0_A) =_{\two} f(1_A)\big)\\
 & \TOT 0_{\two} =_{\two} 1_{\two}\\
 & \TOT: \bot_{\two}.
 \end{align*}
 Let $\B f \in \B {\widehat{A}}$, such that $\B f : a \neq_{\sigma A} b$. If $a \in \dom(\B f)$ and $b \in \dom(\B f)^{\neq_A}$, then $b \neq_A b$, since by the hypothesis $a =_{\sigma A} b$ we have that $a \in \dom(\B f) \TOT b \in \dom(\B f)$. As $b =_A b$, by $(\bool_2)$ for $A$ we get $0_A =_A 1_A$, hence by $(*)$ we het $0_{A} =_{\sigma A} 1_A$. If $b \in \dom(\B f)$ and $a \in \dom(\B f)^{\neq_A}$,
 we work as in the previous case. Next we suppose that $a, b \in \dom(\B f)$ and $f(a) \neq_{\two} f(b)$.
 Since the hypothesis $a =_{\sigma A} b$ implies in this case that $f(a) =_{\two} f(b)$, condition $(\bool_2)$ for $\two$ implies $\bot_{\two}$, and hence $\bot_{\sigma A}$.\\
 (v) If $\B h := (\dom(\B h), h) \in \widehat{(\B A,\B B)}$, let the assignment routine $\sigma h \colon \dom(h) \sto B$, defined by the rule $\sigma h(a) := h(a)$, for every $a \in \dom(\B h)$. First we show that $\sigma h$ is a partial function from $\sigma A$ to $B$. For that, it suffices to show that it preserves the corresponding equalities. Let $a, b \in \dom(\B h)$, such that $a =_{\sigma A} b$. By the separation hypothesis on $B$, to show that $h(a) =_{\two} h(b)$, it suffices to show that
 $$\forall_{\B g \in \widehat{\B B}}\bigg((h(a) \in \dom(\B g) \TOT h(b) \in \dom(\B g)) \wedge  \big(h(a) \in \dom(\B g) \wedge h(b) \in \dom(\B g) \To g(h(a)) =_{\two} g(h(b))\big)\bigg).$$
 If $\B g \in \widehat{\B B}$, then by Proposition~\ref{prp: compswap} we have that $\B g \circ \B h \in \widehat{\B A}$. Then the formulas $(h(a) \in \dom(\B g) \TOT h(b) \in \dom(\B g))$ and $\big(h(a) \in \dom(\B g) \wedge h(b) \in \dom(\B g) \To g(h(a)) =_{\two} g(h(b))$ follow immediately by the definition $a =_{\sigma A} b$ for 
  $\B g \circ \B h \in \widehat{\B A}$. For the remaining properties of $\sigma h$ we work as in the proof of (i).\\
  (vi) We work similarly to the proof of case (ii).
 \end{proof}

Notice that in the proofs of conditions (i), (iii), and (v) of Theorem~\ref{thm: sc} we avoided the converse to the implication $(*)$, which in general doesn't hold, by using the definition or the property of separation. Let the functor $\sigma \colon \SwapAlg_{\tii} \to \SepSwapAlg_{\tii}$, defined by
the rules $A \mapsto \sigma A$ and $(\widehat{(\B A,\B B)} \ni \B h \mapsto \B {\sigma(\sigma_B \circ h)} \in \widehat{(\B {\sigma A}, \B {\sigma B})}$
where, according to Theorem~\ref{thm: sc}$($v$)$, $ \B {\sigma(\sigma_B \circ h)}$ is the unique partial function that makes the following rectangle commutative in the obvious sense
\begin{center}
	\begin{tikzpicture}
		
		\node (E) at (0,0) {$A \ $};
		\node[right=of E] (K) {};
		\node[right=of K] (F) {$\sigma A$};
		\node[below=of E] (B) {$B$};
		\node[right=of B] (C) {};
		\node[right=of C] (A) {$\sigma B$.};

		\draw[->] (E)--(F) node [midway,above] {$\sigma_A$};
		\draw[-right to] (E) to node [midway,left] {$ h $} (B);
		\draw[-right to] (F) to node [midway,right] {$\sigma(\sigma_B \circ h)$} (A);
		\draw[->] (B)--(A) node [midway,below] {$\sigma_B$};
		\draw[-right to] (E) to node [midway,above] {} (A);

	\end{tikzpicture}
\end{center}  
The fact that $\sigma$ is a functor follows in a straightforward way. If $\Emb \colon \SepSwapAlg_{\tii} \to \SwapAlg$ is the corresponding embedding functor, then $\sigma$ is left adjoint to $\Emb$, since By Theorem~\ref{thm: sc} it follows that $\SepSwapAlg_{\tii}$ is reflective in $\SwapAlg_{\tii}$, which is equivalent to $\sigma$ being left adjoint to $\Emb$.

\section{The special case of Boolean algebras}
\label{sec: ba}

In this section we discuss the effect of our previous results for swap algebras of type $(\tii)$ on the constructive representation theory of Boolean algebras. As we have already explained earlier, a Boolean algebra is a swap agebra of type $(\tii)$ (and of type $(\ti)$) in a trivial way, since in a Boolean algebra
$1_a =_A 1$ and $0_a =_A 0$, for every $a \in A$. As a consequence, a field of elements $F$ in $A$ is always $A$ itself, as by the equality $1 =_A a \vee (-a)$ and condition $(\fii_3)$ we get $a \in F$. Consequently, a swap character on a Boolean Algebra $A$ is always total\footnote{For this reason, we avoid using bold symbols for swap characters on a Boolean algebra $A$ and for their set $\widehat{A}$.}, and the separation condition on $A$ takes the expected form
$$\forall_{a, b \in A}\bigg(\forall_{f \in \widehat{A}}\big(f(a) =_{\two} f(b)\big) \To a =_A b\bigg).$$
If we restrict to one-dimensional subsets, then the classical theory of Boolean algebras and Boolean-homomorphisms cannot be constructivised in a direct way. E.g., even to show that the powerset $\C P(X)$ of a set $X$ is a separated Boolean algebra, we need to use the pointed Boolean homomorphisms
$$\widehat{x_0}(A) := \left\{ \begin{array}{ll}
	1   &\mbox{, $x_0 \in A$}\\
	0             &\mbox{, $x_0 \notin A$}
\end{array}
\right. $$
which depend on $\PEM$. We bypass this problem by considering total complemented subsets $\B A := (A^1, A^0)$, with 
$A^1 \cup A^0 =_{\C E(X)} X$, and then define the pointed Boolean homomorphisms by
	$$\widehat{x_0}(\B A) := \left\{ \begin{array}{ll}
	1   &\mbox{, $x_0 \in A^1$}\\
	0             &\mbox{, $x_0 \in A^0$.}
\end{array}
\right. $$
Working as in the proof of Proposition~\ref{prp: sep1} we get that the total elements $\Tot^{\Disj}(X)$ of $\C E^{\Disj}(X)$ is a separated Boolean algebra. Our Stone representation theorem for separated swap algebras of type $(\tii)$ takes the following form for separated Boolean algebras.

\begin{theorem}[Constructive Stone representation theorem for separated Boolean algebras]\label{thm: stoneba}
	If $A$ is a separated Boolean algebra, then the function 
	$\pmb{\Stone} \colon A \sto \Tot\big[\C E^{\Disj}(\widehat{A})\big]$ where $a \mapsto \pmb{\Stone}(a),$ and
	$\pmb {\Stone}(a) := \big(\Stone^1(a), \Stone^0(a)\big),$ with
	$\Stone^1(a) := \{f \in \widehat{A} \mid f(a) =_{\two} 1\}$, and $\Stone^0(a) := \{g \in \widehat{A} \mid g(a) =_{\two} 0\},$
	is a total Boolean-embedding of $A$ into the Boolean algebra $\Tot\big[\C E^{\Disj}(\widehat{A})\big]$ of total elements of $\C E^{\Disj}(\widehat{A})$.
\end{theorem}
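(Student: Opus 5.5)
The plan is to derive Theorem~\ref{thm: stoneba} as a special case of Theorem~\ref{thm: stone}, so that the only new work concerns totality. First I record the consequences of $A$ being a Boolean algebra, viewed as a swap algebra of type $(\tii)$ via Proposition~\ref{prp: totala}. Since $1_a =_A 1$ and $0_a =_A 0$ for every $a \in A$, any field of elements $F$ satisfies $A \subseteq F$. Indeed, $1 =_A a \vee (-a) \in F$ by $(\fii_1)$, so $(\fii_3)$ gives $a \in F$. Hence every swap character $\B f$ on $A$ has $\dom(\B f) =_{\C E(A)} A$, and the swap characters are exactly the total Boolean homomorphisms $A \to \two$. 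The general separation condition of Definition~\ref{def: sep} then reduces to the displayed total form, because its two domain conjuncts hold trivially. Consequently a separated Boolean algebra is a separated swap algebra of type $(\tii)$. Also, the sets $\Stone^1(a), \Stone^0(a)$ of Theorem~\ref{thm: stoneba} coincide, as extensional subsets of $\widehat{A}$, with those of Theorem~\ref{thm: stone}, since the clause $a \in \dom(\B f)$ is automatic.

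Applying Theorem~\ref{thm: stone} then gives, within $\INT$, that $\pmb{\Stone}$ is a well-defined embedding $A \eto \C E^{\Disj}(\widehat{A})$. It preserves $1$, $-$ and $\vee$, and hence also $\wedge$ via $(\swapa_8)$ and $(\swapa_7)$, and $0 = -1$. It remains to show two things. First, each $\pmb{\Stone}(a)$ is total, i.e.\ $\Stone^1(a) \cup \Stone^0(a) =_{\C E(\widehat{A})} \widehat{A}$. This is immediate: for $f \in \widehat{A}$, $f(a) \in \two$, and so $f(a) =_{\two} 1$ or $f(a) =_{\two} 0$. The reverse inclusion is trivial. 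Second, the codomain $\Tot[\C E^{\Disj}(\widehat{A})]$ is a Boolean algebra. This follows from Proposition~\ref{prp: totala}, since the total elements of a swap algebra of type $(\tii)$ form a Boolean algebra whose operations are the restricted ones. So $\pmb{\Stone}$ corestricts to a map into $\Tot[\C E^{\Disj}(\widehat{A})]$ that preserves the Boolean operations and remains an embedding. It is a total Boolean-embedding because its domain is all of $A$.

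I expect the main obstacle to be the identification of the two notions of character and separation in the first step. In particular I need to check that $\widehat{A}$, with its equality and inequality inherited from $\B A^{\circledast}$ (Definition~\ref{def: equalpf}), behaves like the set of total homomorphisms with the pointwise equality. For total functions the domain clauses of $=_{\B A^{*}}$ are trivially satisfied, so this reduces to checking that $\dom(\B f) =_{\C E(A)} A$ holds as shown above, including for the top and bottom $(\widehat{A}, \emptys_{\widehat{A}})$. The single use of $\EFQ$ in Theorem~\ref{thm: stone}, in $\emptys_{\widehat A} \subseteq \Stone^0(1)$, carries over unchanged. If $\neq_A$ is Boolean, Proposition~\ref{prp: min} applies verbatim.
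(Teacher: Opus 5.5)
Your proposal is correct and follows the paper's route: the paper's proof derives Theorem~\ref{thm: stoneba} from Theorem~\ref{thm: stone} together with the fact that each $\pmb{\Stone}(a)$ is a total complemented subset. The facts that fields equal $A$, that characters are total, and that separation takes its total form are in the discussion just before the theorem. You simply spell out these steps in more detail: totality via $f(a) =_{\two} 1 \vee f(a) =_{\two} 0$, and Proposition~\ref{prp: totala} for the Boolean structure of the codomain.
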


\begin{proof}
It follows immediately by Theorem~\ref{thm: stone} and the obvious fact that $\pmb {\Stone}(a)$ is a total complemented subset of $X$, for every $a \in A$.
\end{proof}

The restriction to separated Boolean algebras in the formulation of our constructive Stone representation theorem for separated Boolean algebras is not a real loss of generality, as the Stone-\v{C}ech theorem for swap algebras of type $(\tii)$ can also be restricted to Boolean algebras. The next theorem follows immediately by Theorem~\ref{thm: sc}.

\begin{theorem}[Stone-\v{C}ech theorem for Boolean algebras]\label{thm: scba} 
	If $(A, =_A, \neq_A)$ is a Boolean algebra, there is a Boolean algebra 
	$(\sigma A, =_{\sigma A}, \neq_{\sigma A})$ and a total Boolean-homomorphism $\sigma_A \colon A \to 
	\sigma A$, such that the following hold:\\[1mm]
	\normalfont (i)
	\itshape For every $f \in \widehat{A}$ there is a unique $\sigma f \in \widehat{\sigma A}$, such that the following left triangle commutes.
	\begin{center}
		\begin{tikzpicture}
			
			\node (E) at (0,0) {$A$};
			\node[right=of E] (F) {$\sigma A$};
			\node [below=of F] (D) {$\two$};
			\node[right=of F] (P) {};
			\node[right=of P] (N) {};
			\node[right=of N] (S) {};
			\node[right=of S] (K) {$A$};
			\node [right=of K] (L) {$\sigma A$};
			\node [below=of L] (M) {$B$};
			
			\draw[->] (E)--(F) node [midway,above] {$ \sigma_A$};
			\draw[->] (E) to node [midway,left] {$\widehat{A} \ni f \ \ $} (D) ;
			\draw[->] (F) to node [midway,right] {$\sigma f \in \widehat{\sigma A}$} (D);
			\draw[->] (K) to node [midway,left] {$\widehat{(A,B)} \ni h \ \ $} (M);
			\draw[->] (K)--(L) node [midway,above] {$\sigma_A$};
			\draw[->] (L) to node [midway,right] {$ \sigma h \in \widehat{(\sigma A,B)}$} (M);

		\end{tikzpicture}
	\end{center} 
	\normalfont (ii)
	\itshape For every $h \in \widehat{\sigma A}$ there is a unique $h{'} \in \widehat{A}$, such that $\sigma h{'} =_{\widehat{\sigma A}} h$, and hence 
	$\widehat{A} =_{\D V_0} \widehat{\sigma A}$.\\[1mm]	
	\normalfont (iii)
	\itshape The Boolean algebra $\sigma A$ is separated.\\[1mm]	
	\normalfont (iv)
	\itshape If $(A, =_A, \neq_A, 0_A, 1_A)$ is a set with a Boolean inequality, then $(\sigma A, =_{\sigma A}, \neq_{\sigma A}, 0_{\sigma A}, 1_{\sigma A})$ is a set with a Boolean inequality.\\[1mm]
	\normalfont (v)
	\itshape
	If $(B, =_B, \neq_B)$ is a separated Boolean algebra, and $h \colon A \to B$ is a Boolean-homomorphism,
	there is a unique Boolean-homomorphism $\sigma h \colon \sigma A \to B$, such that the above right triangle commutes.\\[1mm]
	\normalfont (vi)
	\itshape For every $h \in \widehat{(\sigma A, B)}$ there is a unique $h{'} \in \widehat{(A, B)}$ with $\sigma h{'} =_{\widehat{(\sigma A, B)}} h$, and $\widehat{(A, B)} =_{\D V_0} \widehat{(\sigma A, B)}$.
\end{theorem}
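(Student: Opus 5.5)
We will derive Theorem~\ref{thm: scba} from Theorem~\ref{thm: sc}, regarding the Boolean algebra $A$ as a swap algebra of type $(\tii)$ via Proposition~\ref{prp: totala}. We then check that every construction in that proof stays inside the class of Boolean algebras and total homomorphisms. As recalled at the start of this section, in a Boolean algebra every field of elements is the whole algebra. This holds because $1 =_A a \vee (-a)$ and $(\fii_1)$, $(\fii_3)$ force $a \in F$. Hence every swap character on $A$ is total, and likewise every partial swap homomorphism $h \colon A \pto B$ into a Boolean algebra $B$ has domain all of $A$. So $\widehat{A}$ and $\widehat{(A,B)}$ in the Boolean sense coincide with $\widehat{\B A}$ and $\widehat{(\B A, \B B)}$ in the swap sense.

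The first step is to take $\sigma A$ and $\sigma_A$ exactly as in the proof of Theorem~\ref{thm: sc}. The underlying totality is $A$ with the operations of $A$, the equality $a =_{\sigma A} b$ and inequality $\neq_{\sigma A}$ defined through $\widehat{\B A}$, and $\sigma_A$ the identity assignment. Since all characters are total, $=_{\sigma A}$ reduces to $\forall_{f \in \widehat{A}}\big(f(a) =_{\two} f(b)\big)$. The main point to verify is that $\sigma A$ is again a Boolean algebra, and not merely a swap algebra of type $(\tii)$.

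For this I would use implication $(*)$: every equation valid in $A$ remains valid in $\sigma A$. In particular $1_a =_A 1$ and $0_a =_A 0$ give $1_a =_{\sigma A} 1$ and $0_a =_{\sigma A} 0$. Thus every element of $\sigma A$ is total, and by Proposition~\ref{prp: totala} the total elements form a Boolean algebra. The axiom $0 \neq_{\sigma A} 1$ follows by choosing any character $f$ and using $f(0) =_{\two} 0 \neq_{\two} 1 =_{\two} f(1)$. The one wrinkle is that this needs $\widehat{A}$ to be inhabited. Theorem~\ref{thm: sc} already asserts that $\sigma A$ is a swap algebra, so I take $(\swapa_5)$ as inherited from it and only add the totality observation. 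I expect this step, checking that the Boolean structure (totality of elements) survives the change of equality, to be the only non-trivial point. It is handled by $(*)$ and needs no converse of $(*)$.

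Items (i)--(vi) are then read off from the corresponding items of Theorem~\ref{thm: sc}, using the collapse of partiality described above. In (i) and (v), $\dom(\B{\sigma f}) = \{\sigma_A(a) \mid a \in A\}$ is all of $\sigma A$, so $\sigma f$ and $\sigma h$ are total Boolean homomorphisms. In (ii) and (vi), the unique preimage $h'$ is total because $A$ is Boolean. Item (iii) is Theorem~\ref{thm: sc}(iii), now in the expected total form of separation. Item (iv) is verbatim Theorem~\ref{thm: sc}(iv). Uniqueness statements transfer unchanged, since equality in $\widehat{\sigma A}$ and $\widehat{(\sigma A, B)}$ is the restriction of the partial-function equality to total functions.
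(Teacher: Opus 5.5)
Your reduction is the paper's own. The paper's proof of this theorem is the single remark that it follows from Theorem~\ref{thm: sc}. Your added checks are correct: every field of a Boolean algebra is all of $A$, so characters and homomorphisms are total; and $(*)$ gives $1_a =_{\sigma A} 1$ and $0_a =_{\sigma A} 0$, so $\sigma A$ consists of total elements and is Boolean by Proposition~\ref{prp: totala}.

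\textbf{The gap at $(\swapa_5)$.} The wrinkle you flag is a genuine gap, and deferring to Theorem~\ref{thm: sc} does not close it. That theorem's proof obtains the axioms of $\sigma A$ only from $(*)$, which transports equations, whereas $(\swapa_5)$ is an inequality. The same appeal to $(*)$ is made for $(\bool_1)$ in (iv), which you also read off verbatim.

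Since every character satisfies $f(0) =_{\two} 0$ and $f(1) =_{\two} 1$, the assertion $0 \neq_{\sigma A} 1$ is equivalent to $\widehat{A}$ being inhabited. No other choice of $\sigma A$ can help. If $\widehat{A}$ had no element, then by (ii) neither would $\widehat{\sigma A}$. The premise of the separation condition (iii) would then hold vacuously for $0, 1$ and force $0 =_{\sigma A} 1$, contradicting $0 \neq_{\sigma A} 1$ by $(\Ineq_1)$.

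So the statement as written yields $\neg\neg\exists_{f}\big(f \in \widehat{A}\big)$ for every Boolean algebra $A$. Classically this is the Boolean prime ideal theorem, exactly the principle the construction is meant to avoid, so the statement cannot be proved constructively. The step can only be repaired by an extra hypothesis, e.g.\ that $A$ has at least one character. Under that hypothesis your argument with $f(0) \neq_{\two} f(1)$ does give $(\swapa_5)$ and $(\bool_1)$ for $\sigma A$.

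\textbf{A second inherited step, in (v) and (vi).} Here $\sigma h$ must be an element of $\widehat{(\sigma A, B)}$, hence strongly extensional from $\neq_{\sigma A}$ to $\neq_B$. That is, from $h(a) \neq_B h(b)$ you must produce $f \in \widehat{A}$ with $f(a) \neq_{\two} f(b)$.
\begin{itemize}
\item Strong extensionality of $h$ gives only $a \neq_A b$.
\item Separation of $B$ works only in the direction ``all characters agree $\To$ equal''.
\item So you obtain at best $\neg\forall_{g \in \widehat{B}}\big(g(h(a)) =_{\two} g(h(b))\big)$, and passing from this to an existential is a Markov-type step.
\end{itemize}
For $\sigma f$ in (i), the character $f$ itself witnesses $\neq_{\sigma A}$, but for $\sigma h$ nothing does. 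Reading (v) off from Theorem~\ref{thm: sc}(v) therefore needs either a new argument or a hypothesis on $B$ that makes $\neq_B$ witnessed by characters.
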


\section{Concluding comments}
\label{sec: concl}

In~\cite{MWP24b} we show classically a Stone representation theorem for arbitrary swap rings.
In this work we prove intuitionistically a Stone representation theorem for separated swap algebras of type $(\tii)$, which induces a Stone representation theorem for separated Boolean algebras. We also introduce sets with a Boolean inequality, in order to avoid an unnecessary use of $\EFQ$. If we restrict to separated swap algebras with a Boolean inequality, then their Stone representation is shown within minimal logic. Our constructive approach to the Stone representation theorem for separated swap algebras of type $(\tii)$, and as a special case to the Stone representation theorem for separated Boolean algebras, reveals a new picture. Not only we avoid the Boolean Prime Ideal Theorem in the proof of the classical Stone representation theorem, a slightly weaker version of the axiom of choice, but we also show that by focusing on separated swap algebras of type $(\tii)$ (separated Boolean algebras) and by proving a Stone-\v{C}ech theorem for swap algebras of type $(\tii)$ (a Stone-\v{C}ech theorem for Boolean algebras) the Stone representation of these structures is more or less ``a problem of equality and inequality'', rather than a problem of existence of certain prime or maximal ideals. Because of this, the main results presented here provide a new insight also to the classical theory of Boolean algebras, an insight that comes from constructive mathematics\footnote{Probably, this insight couldn't be found if we were restricted to the classical and one-dimensional approach to sets and subsets.}.

\begin{table}
\caption{Correspondence between Boolean algebras and swap algebras of type $(\tii)$}
\vspace{3mm}
\label{catComp}
\begin{tabular}{l@{\hspace{2.5cm}}l}
	\itshape Boolean algebras  & \itshape Swap algebras of type $(\tii)$\\
	\hline
	Boolean algebra $A$ &     swap algebra of type $(\tii)$ $A$\\
	(total) Boolean character on $A$ & (partial) swap character on $A$\\
	pointed Boolean character on $A$ & pointed swap character on $A$\\
	separated $A$ &  separated $A$\\
	 Stone representation for $A$ & Stone representation for $A$\\
	Stone-\v{C}ech theorem for $A$ & Stone-\v{C}ech theorem for $A$\\\hline
\end{tabular}
\end{table}

It is worth exploring whether theorems of Stone-\v{C}ech type can be shown in the representation theory of other structures. A first candidate-structure is that of a commutative $C^*$-algebra, since the Gelfand representation of a commutative $C^*$-algebra is a generalisation of the Stone representation of a Boolean algebra. It is expected that a similar partial treatment of the notion of a character will be crucial to such a constructive treatment of the Gelfand representation\footnote{A constructive and ``total'' approach to $C^*$-algebras is elaborated in~\cite{Ta05}. See also the constructive proof of a localic Gelfand duality by Coquand and Spitters in~\cite{CS09}.}. The real version of the partial Gelfand transform introduced in section~\ref{sec: gelfand} is a very first step towards this direction, since the formulation and the proof of Proposition~\ref{prp: pGelfand} go through also for the real partial Gelfand transform. Partial characters of an algebra $A$ are defined in a way such that the corresponding functions $\widehat{x} \colon A_x^{\circledast} \to \Real$ are partial characters of $A$. As separated $C^*$-algebras can be defined in a way analogous to the definition of separated swap algebras, 
a Stone-\v Cech theorem for commutative $C^*$-algebras is highly expected to be provable. Such a theorem would explain why a restriction of the Gelfand representation to separated $C^*$-algebras is no loss of generality from the point of view of the theory of characters of such an algebra. 


An important extension of this work is the constructive proof of a topological Stone representation theorem for separated swap algebras of type $(\tii)$, as a constructive counterpart to the classical topological Stone representation theorem for Boolean algebras. The constructive notion of topological space that seems more suitable to us for such an extension is that of a \textit{complemented topological space of type} $(\tii)$. Complemented topological spaces of type $(\ti)$, introduced in~\cite{Pe24a}, are sets equipped with topologies $\B {\C T}$ of open complemented subsets i.e., sets of complemented subsets $(G^1, G^0)$ that satisfy the usual axioms for a  topology with respect to the operations of union and intersection of type $(\ti)$. If these axioms are satisfied with respect to the operations of union and intersection of type $(\tii)$, then we get the complemented topological spaces of type $(\tii)$. A $\B {\C T}$-closed set is a complemented subset $(F^1, F^0)$, such that its complement $(F^0, F^1)$ is in $\B {\C T}$. In this way the classical duality between open and closed sets is recovered constructively. As the clopen sets of a topological space form a Boolean algebra, the clopen sets of a complemented topological space of type $(\tii)$ form a swap algebra of type $(\tii)$. A notion of \textit{swap space of type} $(\tii)$ needs to be identified i.e, a complemented topological space of type $(\tii)$ such that the Stone duality between Boolean algebras and Boolean spaces is extended to a Stone duality between separated swap algebras of type $(\tii)$ and swap spaces of type $(\tii)$. 
	





%
%

\end{document}